\documentclass{amsart}
\usepackage{amsfonts,amssymb,amsmath,amsthm,mathrsfs}
\usepackage{url}
\usepackage{enumerate}

\urlstyle{sf}

\def\inn#1#2{\langle#1,#2\rangle}

\newtheorem{theorem}{Theorem}[section]
\newtheorem{lemma}[theorem]{Lemma}

\theoremstyle{definition}

\newtheorem{remark}[theorem]{Remark}
\numberwithin{equation}{section}

\author[G. Hu]{Guoen Hu}
\address{Guoen Hu, Department of Mathematics, School of Science, Zhejiang University of Science and Technology,
	Hangzhou 310023, People's Republic of China}
\email{guoenxx@163.com}
\author[X. Lai]{Xudong Lai}
\address{Xudong Lai, Institute for Advanced Study in Mathematics, Harbin Institute of Technology, Harbin, 150001, People's Republic of China}
\email{xudonglai@hit.edu.cn}
\author[X. Tao]{Xiangxing Tao}
\address{Xiangxing Tao, Department of Mathematics, School of Science, Zhejiang University of Science and Technology,
	Hangzhou 310023, People's Republic of China}
\email{xxtau@163.com}
\author[Q. Xue]{Qingying Xue}
\address{Qingying Xue, School of Mathematical Sciences,
Beijing Normal University,
Laboratory of Mathematics and Complex Systems,	Ministry of Education,
Beijing 100875,
People's Republic of China}
\email{qyxue@bnu.edu.cn}
\thanks{X. Lai was supported by NNSF of China (No. 12322107, No. 12271124 and No. 11801118),
T. Tao   was supported by the NNSF of
China
(No. 12271843) and Zhejiang Province NSF of China (No. LY24A010016), Q. Xue is supported by the National Key R\&D Program of China (No. 2020YFA0712900) and NNSF of China (No. 12271041).}
\thanks{Xiangxing Tao is the corresponding author}

\keywords{Calder\'on commutator, maximal operator, endpoint estimate, approximation}
\subjclass{ 42B20}
\begin{document}

\title[Calder\'on commutator]{An endpoint estimate for the maximal Calder\'on commutator with rough kernel}

\begin{abstract}In this paper, the authors consider the endpoint estimates for the  maximal Calder\'on
commutator defined  by
$$T_{\Omega,\,a}^*f(x)=\sup_{\epsilon>0}\Big|\int_{|x-y|>\epsilon}\frac{\Omega(x-y)}{|x-y|^{d+1}}
\big(a(x)-a(y)\big)f(y)dy\Big|,$$
where $\Omega$ is homogeneous of degree zero, integrable on $S^{d-1}$ and has vanishing moment of order
one, $a$ be a function on $\mathbb{R}^d$ such that $\nabla a\in L^{\infty}(\mathbb{R}^d)$.
The authors prove that if $\Omega\in L\log L(S^{d-1})$,
then $T^*_{\Omega,\,a}$ satisfies an endpoint estimate of $L\log\log L$ type.
\end{abstract}
\maketitle
\section{Introduction}

As  is well known, the one-dimensional Calder\'on commutator was originated from the seminal
work of Calder\'on \cite{cal2}. In order to represent certain linear differential operators
by means of singular integral operators, Calder\'on  \cite{cal2} introduced the   Calder\'on commutator,
defined as
$$[b,\,H\frac{d}{dx}]f(x)={\rm\,p.\,v.}\int^{\infty}_{-\infty}\frac{b(x)-b(y)}
{(x-y)^2}f(y)dy,$$
where $b$ is a Lipschitz function in $\mathbb{R}$. This operator is a typical non-convolution singular integral
operator, a bilinear operator and plays an important role in the
theory of singular integral operators and have many applications in related topics, for example
see \cite{CM75,Fon16,gra,Mus14I,Mus14II,Mus14III} for details.

The higher-dimensional Calder\'on commutator was also introduced by Calder\'on \cite{cal1}.
Let $\Omega$ be homogeneous of degree zero, integrable on  $S^{d-1}$, the unit sphere in
$\mathbb{R}^d$,  and have vanishing  moment of order one, that is,
\begin{eqnarray}\label{equation1.1}\int_{S^{d-1}}\Omega(\theta)\theta^{\gamma}d\theta=0,\,\,\,\gamma\in\mathbb{Z}_+^d,\,\,|\gamma|=1.\end{eqnarray} Let $a$ be a function on $\mathbb{R}^d$ such that $\nabla a\in L^{\infty}(\mathbb{R}^d)$. Define the  $d$-dimensional Calder\'on commutator $T_{\Omega, a}$ by
\begin{eqnarray}\label{eq:1.2} T_{\Omega, a}f(x)={\rm p. v.}\int_{\mathbb{R}^d}\frac{\Omega(x-y)}{|x-y|^{d+1}}\big(a(x)-a(y)\big)
f(y)dy.\end{eqnarray}
Calder\'on \cite{cal1} proved that if $\Omega\in L\log L(S^{d-1})$, then $T_{\Omega,a}$
is bounded on $L^p(\mathbb{R}^d)$ for all $p\in (1,\,\infty)$.
Ding and the second author \cite{dinglai} considered the
weak type endpoint estimate for $T_{\Omega, a}$, and proved that $\Omega\in L\log L(S^{d-1})$ is a
sufficient condition such that $T_{\Omega,a}$ is bounded from $L^1(\mathbb{R}^d)$ to
$L^{1,\,\infty}(\mathbb{R}^d)$.

To formulate our first result, we recall some facts about the maximal operator associated to the
homogeneous singular integral operator. Let $T_{\Omega}$ be the homogeneous singular integral operator
defined by
\begin{eqnarray}\label{eq1.singular}{T}_{\Omega}f(x)={\rm p.\,v.}\int_{\mathbb{R}^d} \frac
{\Omega( y')}{|y|^d}f(x-y)dy,\end{eqnarray}
where $\Omega$ is homogeneous of degree zero, has mean value zero
and integrable on $S^{d-1}$. $T_{\Omega}^*$, the maximal operator associated with $T_{\Omega}$, is  defined by
\begin{eqnarray}\label{eq1.maximal}T_{\Omega}^*f(x)=\sup_{\epsilon>0}\Big|\int_{|y|>\epsilon}
\frac{\Omega(y)}{|y|^d}f(x-y)dy\Big|.
\end{eqnarray}
It is well known that $\Omega\in L\log L(S^{d-1})$ guarantees that $T_{\Omega}^*$ is bounded
on $L^p(\mathbb{R}^d)$ for all $p\in (1,\,\infty)$. Thanks to the celebrated work of Seeger \cite{seeg}, we know that
$\Omega\in L\log L(S^{d-1})$ is a sufficient condition such that $T_{\Omega}$ is bounded from $L^1(\mathbb{R}^d)$
to $L^{1,\,\infty}(\mathbb{R}^d)$, see also \cite{chr2}.
For a long time, the weak type endpoint estimate for $T_{\Omega}^*$ is an open problem
even for the case of $\Omega\in L^{\infty}(S^{d-1})$. Honz\'{i}k \cite{hon} established a
local endpoint estimate for $T_{\Omega}^*$ when $\Omega\in L^{\infty}(S^{d-1})$.
Bhojak and Mohanty \cite{bhmo} improved the result of Honzik, and proved the following result.
\begin{theorem}\label{dingliweak}
Let $\Omega$ be homogeneous of degree zero, have mean value zero on $S^{d-1}$, and $\Omega\in L\log L
(S^{d-1})$, namely,
$$\int_{S^{d-1}}|\Omega(\theta)|\log ({\rm e}+|\Omega(\theta)|)d\theta<\infty.$$ Then for all $\lambda>0$,
\begin{eqnarray}\label{eq1.weak}|\{x\in\mathbb{R}^d:\,T_{\Omega}^*f(x)>\lambda\}|\lesssim
\int_{\mathbb{R}^d}\Phi_2\big(\frac{|f(x)|}{\lambda}\big)dx,
\end{eqnarray}
where and in the following,  $\Phi_2(t)=t\log\log ({\rm e}^2+t)$.
\end{theorem}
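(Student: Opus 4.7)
The plan is to adapt Seeger's endpoint technique for $T_{\Omega}$ to the maximal truncation, combined with a magnitude decomposition of $\Omega$ that produces the double logarithm. Normalize $\lambda=1$ and apply a Calder\'on--Zygmund decomposition of $f$ at height $1$: write $f=g+\sum_Q b_Q$, with $\|g\|_\infty\lesssim 1$ and $\|g\|_2^2\lesssim \|f\|_1$, each $b_Q$ supported in a dyadic cube $Q$ with $\int b_Q=0$, $\int|b_Q|\lesssim|Q|$, and $\sum_Q|Q|\lesssim\|f\|_1$. Since $T_{\Omega}^*$ is bounded on $L^2$ under the $L\log L$ hypothesis, Chebyshev disposes of the good part $g$; and the enlarged set $E=\bigcup_Q cQ$ has measure $\lesssim\|f\|_1$, so it remains to bound $|\{x\notin E: T_\Omega^* b(x)>c\}|$ where $b=\sum_Q b_Q$.

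To produce the $\log\log$ saving, decompose $\Omega$ on $S^{d-1}$ by its magnitude: $\Omega=\sum_{\mu\geq 0}\widetilde\Omega_\mu$ with $\widetilde\Omega_\mu$ essentially supported where $|\Omega|\sim 2^\mu$, suitably corrected so that each piece still has vanishing first moment. The $L\log L$ hypothesis yields $\sum_\mu(1+\mu)\|\widetilde\Omega_\mu\|_{L^1(S^{d-1})}\lesssim\|\Omega\|_{L\log L}$. For each $\mu$ and for the ordinary (non-maximal) operator $T_{\widetilde\Omega_\mu}$, I would run Seeger's microlocal decomposition of the kernel at dyadic scales, writing each annular piece as a smooth main term with Fourier support of width $\sim 2^{-j}$ plus an error; the main term is handled by an almost-orthogonality argument on $L^2$ whose constant grows like $\mu\|\widetilde\Omega_\mu\|_{L^1}^{1/2}$ (the gain comes from trading $\|\widetilde\Omega_\mu\|_\infty\lesssim 2^\mu$ against $L^1$ size), while the error is integrable against $b$ using the cancellation of $b_Q$ and the smoothness bound after removing $E$. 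Summing over $\mu$ and converting level-set by level-set gives the $\Phi_2(|f|)$-integral.

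The main obstacle is promoting the above from $T_{\widetilde\Omega_\mu}$ to its maximal truncation $T_{\widetilde\Omega_\mu}^*$. I would replace the sharp cutoff $\mathbf{1}_{|y|>\epsilon}$ by a smooth radial version $\varphi(|y|/\epsilon)$, so that
\begin{equation*}
T_{\widetilde\Omega_\mu}^{(\epsilon)}f(x)=\sum_{j:\, 2^j>\epsilon}K_{\widetilde\Omega_\mu,j}\ast f(x)+R_\epsilon f(x),
\end{equation*}
where $R_\epsilon f$ is pointwise dominated by a Hardy--Littlewood type maximal function $M_{|\widetilde\Omega_\mu|}f$; the latter satisfies an acceptable endpoint bound because $\widetilde\Omega_\mu\in L^1(S^{d-1})$. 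The truly delicate step is then a Cotlar-type pointwise inequality
\[
\sup_{j_0}\Bigl|\sum_{j\geq j_0}K_{\widetilde\Omega_\mu,j}\ast f(x)\Bigr|\lesssim M\bigl(T_{\widetilde\Omega_\mu}f\bigr)(x)+C_\mu \, Mf(x),\qquad x\notin E,
\]
with $C_\mu$ growing at most polynomially in $\mu$, which transfers the weak-type endpoint bound from $T_{\widetilde\Omega_\mu}$ to the supremum. Preserving the linear-in-$\mu$ loss through this Cotlar step, so that the summation over $\mu$ still yields $\|\Omega\|_{L\log L}$ rather than a worse norm, is what I expect to be the hardest technical point; if that route proves too lossy I would instead try to bound the partial-sum supremum directly by a square-function argument at each level $\mu$, using Littlewood--Paley decompositions of the smoothly truncated kernel and the almost-orthogonality already established in the Seeger step.
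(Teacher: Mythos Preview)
First, note that the paper does not prove this theorem: it is quoted from Bhojak--Mohanty \cite{bhmo}. The paper's own contribution is the commutator analogue (Theorem~\ref{dingli1.main}), whose proof in Sections~2--4 parallels the Bhojak--Mohanty argument, so a comparison with that is still meaningful.

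Your proposal has a genuine gap in how the double logarithm arises. You assert that ``summing over $\mu$ and converting level-set by level-set gives the $\Phi_2(|f|)$-integral,'' but the magnitude decomposition of $\Omega$ governs $\Omega$, not $|f|$: summing in $\mu$ merely consumes the $L\log L$ hypothesis on $\Omega$ and goes into the implicit constant. The factor $\log\log({\rm e}^2+|f|)$ must emerge from a decomposition of the \emph{bad part} $b$, not of $\Omega$. In the actual scheme (see Section~3 for the commutator version) one splits $b=\sum_{l\geq 1} G^l$ with $G^l$ supported where $2^{c_1 2^{l-1}}<|f|\leq 2^{c_1 2^l}$, so that $\|G^l\|_{L^2}^2\lesssim 2^{c_1 2^l}\|f\|_{L^1}$. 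Each $G^l$ is then paired with a smooth approximation of the operator at a level depending on $l$, whose $L^2$ error on the lacunary maximal operator decays like $2^{-\tau 2^l}$; the two exponentials compete and the error terms sum over $l$. The smooth pieces are treated separately via a weak $(1,1)$ bound and a refined Cotlar/microlocal argument. The $\log\log|f|$ enters through a diagonal term in which the number of dyadic scales contributing is $O(l)$, and on ${\rm supp}\,G^l$ one has $l\sim\log\log|f|$.

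Your Cotlar step is also problematic as stated: the rough kernel $\widetilde\Omega_\mu(y)/|y|^d$ lacks H\"ormander regularity, so the classical Cotlar inequality does not hold, and there is no mechanism forcing $C_\mu$ to be polynomial in $\mu$. In the Honz\'{\i}k/Bhojak--Mohanty scheme (mirrored here in Lemma~\ref{lem4.2}), the Cotlar-type estimate is applied not to the rough operator itself but to differences of successive \emph{smooth} approximations acting on the height-decomposed pieces $B_{j-s,2}^l$; this coupling of the smoothing level to the height index $l$ is the essential idea you are missing.
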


Now we consider the maximal operator associated with the Calder\'on commutator, defined by
$$ T_{\Omega, a}^*f(x)=\sup_{\epsilon>0}\Big|\int_{|x-y|>\epsilon}\frac{\Omega(x-y)}{|x-y|^{d+1}}\big(a(x)-a(y)\big)f(y)dy\Big|.$$
It is well known that $\Omega\in L\log L(S^{d-1})$ is a sufficient condition such that $T_{\Omega,a}$ is bounded on $L^p(\mathbb{R}^d)$ for all $p\in (1,\,\infty)$, see \cite{bc, pwy} and the related references therein. Hofmann \cite{hof1} proved that $\Omega\in L^{\infty}(S^{d-1})$ guarantees the $L^p(\mathbb{R}^d,\,w)$
boundedness of $T_{\Omega,\,a}^*$
for all $p\in (1,\,\infty)$ and $w\in A_p(\mathbb{R}^d)$.
As far as we know, there is no any endpoint estimate for $T_{\Omega,\,a}^*$ when $\Omega$ only satisfies
 size condition, even for the case of $\Omega\in L^{\infty}(S^{d-1})$.
The  purpose of this paper is to establish a counterpart of Theorem \ref{dingliweak}
for  the  operator $T_{\Omega,a}^*$. Our main result can be formulated as follows.

\begin{theorem}\label{dingli1.main}
Let  $\Omega$ be homogeneous of degree zero, satisfy the vanishing condition (\ref{equation1.1}).
Let $a$ be a   function in $\mathbb{R}^d$ such that $\nabla a\in L^{\infty}(\mathbb{R}^d)$.
Suppose that $\Omega\in L\log L(S^{d-1})$,
then for all $\lambda>0$, \begin{eqnarray}\label{eq1.lloglmaximal}
\big|\{x\in\mathbb{R}^d:\,|T_{\Omega,\,a}^*f(x)|>\lambda\}\big|
\lesssim \int_{\mathbb{R}^d}\Phi_2\big(\frac{|f(x)|}{\lambda}\big)dx.
\end{eqnarray}
\end{theorem}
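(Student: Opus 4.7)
The plan is to follow the Seeger--Bhojak--Mohanty framework that established Theorem \ref{dingliweak}, adapted to the commutator setting by combining the Lipschitz regularity of $a$ with the weak-type $(1,1)$ bound of Ding--Lai for $T_{\Omega,a}$. Fix $\lambda>0$ and perform a Calder\'on--Zygmund decomposition of $f$ at level $\lambda$, writing $f=g+\sum_Q b_Q$ with pairwise disjoint cubes $\{Q\}$, $\|g\|_\infty\lesssim\lambda$, $\mathrm{supp}\,b_Q\subset Q$, $\int b_Q=0$, and $\sum_Q|Q|\lesssim\lambda^{-1}\|f\|_1$. The $L^2$-boundedness of $T^*_{\Omega,a}$ for $\Omega\in L\log L\subset L^1$ yields $|\{|T^*_{\Omega,a}g|>\lambda/2\}|\lesssim\lambda^{-1}\|f\|_1$, which together with the trivial bound $|\bigcup_Q 8Q|\lesssim\lambda^{-1}\|f\|_1$ is dominated by $\int\Phi_2(|f|/\lambda)\,dx$. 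Thus the task is reduced to estimating $|\{x\in(\bigcup_Q 8Q)^c:|T^*_{\Omega,a}b(x)|>\lambda/4\}|$.

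For this, I would decompose $\Omega$ sphere-wise in the style of \cite{seeg,bhmo}: let $\Omega_m$ be the restriction of $\Omega$ to the dyadic level set $\{2^{m-1}<|\Omega|\le 2^m\}$, corrected by a linear spherical harmonic so that each $\Omega_m$ itself satisfies \eqref{equation1.1}. The hypothesis $\Omega\in L\log L(S^{d-1})$ yields $\sum_m(m+1)\|\Omega_m\|_{L^1(S^{d-1})}<\infty$. The key technical step is then a Cotlar-type estimate of the form
\begin{equation*}
T^*_{\Omega_m,a}f(x)\lesssim M\bigl(T_{\Omega_m,a}f\bigr)(x)+\|\nabla a\|_\infty M_{|\Omega_m|}f(x),
\end{equation*}
where $M_{|\Omega_m|}$ denotes the rough maximal operator with kernel $|\Omega_m(y')|/|y|^d$. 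The essential point is that although the commutator kernel carries an extra $|x-y|^{-1}$ singularity compared with the classical case treated in \cite{bhmo}, the Lipschitz control $|a(x)-a(y)|\le\|\nabla a\|_\infty|x-y|$ absorbs this factor, so that the mollification error of the $\epsilon$-truncation is bounded precisely by a rough maximal function.

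Applying the Ding--Lai weak $(1,1)$ bound to $T_{\Omega_m,a}b$, with operator constant controlled by $\|\Omega_m\|_{L\log L}\lesssim(m+1)\|\Omega_m\|_{L^1}$, together with the trivial weak $(1,1)$ bound for $M_{|\Omega_m|}$, produces for each $m$ a quantitative estimate of the exceptional set of $T^*_{\Omega_m,a}b$ on $(\bigcup_Q 8Q)^c$ with constant linear in $m$. To sum over $m$ without losing more than a $\log\log$ factor, I would follow Bhojak--Mohanty and couple $m$ with a level-set decomposition $f=\sum_j f\mathbf{1}_{\{2^j\lambda<|f|\le 2^{j+1}\lambda\}}$, treating the regimes $m\lesssim\log\log(|f|/\lambda)$ and its complement separately so that the $(m+1)$-loss becomes the $\log\log({\rm e}^2+\cdot)$ factor appearing in $\Phi_2$. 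The main obstacle is the commutator Cotlar-type inequality with constant linear in $m$: while for the classical operator the mollification error is controlled by $M_{|\Omega|}$ directly, for the commutator one must combine the cancellation of $\Omega_m$ with the mean-value control on $a(x)-a(y)$ inside Seeger's approximation scheme, keeping all constants linear in $m$; this interplay between the microlocal decomposition of $\Omega_m$ and the Lipschitz structure of $a$ is, I expect, the crux of the argument.
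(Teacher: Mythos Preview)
Your proposal has a genuine gap at the central step. The pointwise Cotlar inequality you write,
\[
T^*_{\Omega_m,a}f(x)\lesssim M\bigl(T_{\Omega_m,a}f\bigr)(x)+\|\nabla a\|_\infty\, M_{|\Omega_m|}f(x),
\]
is not available for rough $\Omega_m$, and this is precisely the obstruction that makes the problem hard. The classical Cotlar argument requires comparing $K(x-y)$ with a nearby value $K(x-y')$, i.e.\ some H\"ormander-type regularity of the kernel; the Lipschitz factor $a(x)-a(y)$ only cancels the extra $|x-y|^{-1}$ in the denominator, it does not create any smoothness in the angular variable of $\Omega_m$. If such a Cotlar inequality held even for $\Omega\in L^\infty(S^{d-1})$, then combining it with Seeger's weak $(1,1)$ bound for $T_\Omega$ would have settled the weak $(1,1)$ problem for $T^*_\Omega$ long ago; this was open until Honz\'{\i}k and Bhojak--Mohanty, and their method is \emph{not} to prove such an inequality.

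What the paper actually does is quite different from your outline. After reducing to the lacunary maximal operator $T^{**}_{\Omega,a}$, it performs a Calder\'on--Zygmund decomposition in which the bad function is further stratified by the \emph{size} of $f$ into pieces $b_{Q,2}^l$ supported where $|f|\sim 2^{c_12^l}$; this stratification is what ultimately produces the $\log\log$ factor. Each stratum is then paired not with the original rough operator but with a smoothed approximation $T_{2^l,a}$ (built by convolving the dyadic kernel pieces $K_j$ with bumps at scale $2^{j-2^l}$), using the key quantitative estimate $\|[T_{\Omega,a}-T_{l,a}]^{**}\|_{L^2\to L^2}\lesssim 2^{-\tau l}\|\Omega\|_{L^\infty}$ (Theorem~\ref{dingli2.8}). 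The Cotlar-type inequality that does appear (Lemma~\ref{lem4.2}) is proved only for the \emph{differences} $\mathcal{H}^i_{m,a;j}$ of smoothed operators applied to the specific atoms $B^l_{j-s,2}$, not for $T^*_{\Omega_m,a}$ acting on general $f$; its proof exploits the smoothness of the approximating kernels together with the cancellation of $b^l_{Q,2}$. The final error terms are handled by a micro-local decomposition in the style of Seeger and Ding--Lai (Lemmas~\ref{lemma5.3}--\ref{lemma5.5}). In short, the ``interplay'' you identify as the crux is correct, but the mechanism is the smoothing/stratification scheme rather than a direct Cotlar inequality for the rough commutator.
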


\begin{remark}\label{rem1.3}Let $\eta\in C^{\infty}_0(\mathbb{R}^d)$   such that ${\rm supp}\, \eta\subset\{x:\frac{1}{2}\leq |x|\leq 2\}$ and
$\sum_j\eta_j(x)=1$ for all $x\in \mathbb{R}^d\backslash\{0\}$, where $\eta_j(x)=\eta(2^{-j}x)$.
Set $I_j(x)=\frac{\Omega(x)}{|x|^d}\eta_j(x)$.
The proof of Theorem \ref{dingliweak} \cite{bhmo} is essentially based on the   Fourier transform that
\begin{eqnarray}\label{equation1.fourier}|\widehat{I}_j(\xi)|\lesssim
|2^j\xi|^{-\beta},\,\,\,\hbox{when}\,\,\Omega\in L\log L(S^{d-1}),\end{eqnarray}
where $\beta\in (0,\,1)$ is a constant.
Our strategy to prove Theorem  \ref{dingli1.main} is partly based on the clever
ideas  from \cite{hon,bhmo}.   Since $T_{\Omega,a}$ is not a convolution operator, the
proof in this paper is more
complicated and involves some new ideas and techniques,  see Section 2, Section 3 and Section 4 for
details. The argument in this paper does not involve
(\ref{equation1.fourier}).
\end{remark}

In what follows, $C$ always denotes a
positive constant that is independent of the main parameters
involved but whose value may differ from line to line. We use the
symbol $A\lesssim B$ to denote that there exists a positive constant
$C$ such that $A\le CB$.  Constant with subscript such as $C_1$,
does not change in different occurrences. For any set $E\subset\mathbb{R}^d$,
$\chi_E$ denotes its characteristic function.  For a cube
$Q\subset\mathbb{R}^d$ and $\lambda\in(0,\,\infty)$,
$\lambda Q$  denotes the cube with the same center as $Q$  whose
side length is $\lambda$ times that of $Q$, and $\ell(Q)$ the side length of $Q$.
For a suitable function $f$, we denote $\widehat{f}$ the Fourier transform of $f$.
For $p\in [1,\,\infty]$, $p'$ denotes the dual exponent of $p$, namely, $p'=p/(p-1)$.
For an operator $T$ and a locally integrable function $a$,  $[a,\,T]$ denotes
the commutator of $T$ with symbol $a$, that is,
$$[a,\,T]f(x)=a(x)Tf(x)-T(af)(x).
$$

\section{Approximations to  $T_{\Omega,a}^{**}$}
Let $T_{\Omega,\,a}^{**}$ be the lacunary maximal operator  associated with
 $T_{\Omega,\,a}$, that is,
$$T_{\Omega,a}^{**}f(x)=\sup_{k\in\mathbb{Z}}\Big|
\int_{|x-y|\geq 2^k}\frac{\Omega(x-y)}{|x-y|^{d+1}}\big(a(x)-a(y)\big)f(y)dy\Big|.$$
In this section, we will show that,  when $\Omega$ satisfies some suitable size conditions, $T_{\Omega,a}^{**}$
can be approximated  by   maximal singular integral operators
with smooth kernels. To begin with, we give  a preliminary lemma.
\begin{lemma}\label{lem0.1}Let $\phi\in C^{\infty}_0(\mathbb{R}^d)$  be a radial function such
that ${\rm supp}\, \phi\subset\{1/4\leq |\xi|\leq 4\}$  and
$$\sum_{j\in\mathbb{Z}}\phi^3(2^{-l}\xi)=1,\,\,\,|\xi|>0.$$ Let $S_l$ be the multiplier operator defined by
$$\widehat{S_lf}(\xi)=\phi(2^{-l}\xi)\widehat{f}(\xi).$$
Let $a\in \mathbb{R}^d$ such that $\nabla a\in L^{\infty}(\mathbb{R}^d)$. Then
\begin{eqnarray}\label{equation01}
\Big\|\Big(\sum_{l\in\mathbb{Z}}|2^l[a,\,S_l]f|^2\Big)^{\frac{1}{2}}\Big\|_{L^2(\mathbb{R}^d)}\lesssim \|f\|_{L^2(\mathbb{R}^d)},
\end{eqnarray}
and
\begin{eqnarray}\label{equation02}
\Big\|\sum_{l\in\mathbb{Z}}2^l[a,\,S_l]f_l\Big\|_{L^2(\mathbb{R}^d)}\lesssim
\Big\|\Big(\sum_{l}|f_l|^2\Big)^{1/2}\Big\|_{L^2(\mathbb{R}^d)}.
\end{eqnarray}

\end{lemma}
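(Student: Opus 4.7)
The plan is to establish \eqref{equation01} directly and then deduce \eqref{equation02} by duality. Since $\phi$ is real-valued and $a$ may be assumed real, $S_l$ is self-adjoint and the commutator $[a,S_l]^{*}=-[a,S_l]$ is skew-adjoint, so \eqref{equation02} is precisely the dual formulation of \eqref{equation01} with the same constant; hence it suffices to prove \eqref{equation01}. For this I start from a kernel-level identity. Writing the kernel of $S_l$ as $K_l(z)=2^{ld}\check{\phi}(2^l z)$ and using $a(x)-a(y)=(x-y)\cdot \int_0^1\nabla a(y+s(x-y))\,ds$, the key observation $2^l(x-y)K_l(x-y)=\Psi_l(x-y)$, where $\Psi_l(z):=2^{ld}\psi(2^l z)$ and $\psi(z):=z\check{\phi}(z)$, leads to the representation
\[
2^l[a,S_l]f(x)=\int_0^1 T_l^s f(x)\,ds, \qquad T_l^s f(x):=\int \Psi_l(x-y)\cdot \nabla a\bigl(y+s(x-y)\bigr)f(y)\,dy.
\]
Note that $\widehat{\Psi_l}(\xi)=c(\nabla\phi)(2^{-l}\xi)$ is a vector-valued Schwartz symbol supported in the annulus $\{2^{l-2}\le|\xi|\le 2^{l+2}\}$, so the family $\{\widehat{\Psi_l}\}_l$ has the crucial finite-overlap property $\sum_l|\widehat{\Psi_l}(\xi)|^2\lesssim 1$.

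Minkowski's inequality in $s$ applied to the $L^2(\ell^2)$-norm reduces \eqref{equation01} to the uniform-in-$s$ square-function bound
\[
\Bigl\|\Bigl(\sum_l|T_l^s f|^2\Bigr)^{1/2}\Bigr\|_{L^2(\mathbb{R}^d)}\lesssim \|\nabla a\|_{\infty}\|f\|_{L^2(\mathbb{R}^d)},\qquad s\in[0,1]. \quad (\star)
\]
At the endpoints $(\star)$ is immediate from Plancherel: at $s=0$, $T_l^0f=\Psi_l*(\nabla a\cdot f)$ is a standard Littlewood--Paley square function of $\nabla a\cdot f$, and the finite overlap of $\widehat{\Psi_l}$ gives $\|(\sum_l|T_l^0 f|^2)^{1/2}\|_2\lesssim\|\nabla a\|_{\infty}\|f\|_2$; at $s=1$, $T_l^1 f(x)=\nabla a(x)\cdot(\Psi_l*f)(x)$ is handled identically after pulling out the $L^{\infty}$ factor $|\nabla a(x)|$ pointwise.

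The main obstacle is the intermediate range $s\in(0,1)$, where the kernel $\Psi_l(x-y)\cdot\nabla a(y+s(x-y))$ fails to be of convolution type and the argument of the rough factor $\nabla a$ couples $x$ and $y$. To handle it, I plan to treat $T_l^s$ as a bilinear paraproduct-type operator in $(\nabla a,f)$, whose Fourier representation (understood in the distributional sense since $\nabla a\in L^{\infty}$)
\[
\widehat{T_l^s f}(\xi)=c\int \widehat{\nabla a}(\eta)\cdot \widehat{\Psi_l}(\xi-s\eta)\,\hat f(\xi-\eta)\,d\eta
\]
has the bilinear symbol $m_l^s(\eta_1,\eta_2)=\widehat{\Psi_l}(\eta_2+(1-s)\eta_1)$. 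Since the shifted annular supports $\{\widehat{\Psi_l}(\,\cdot-s\eta)\}_l$ retain finite overlap in $l$ for each fixed $\eta$, a careful Plancherel argument---employing a suitable linear change of variables to decouple the $s$-shift, together with $\|\nabla a\|_{\infty}<\infty$---yields $(\star)$ with constants independent of $s\in[0,1]$. Integrating over $s$ and applying Minkowski then closes \eqref{equation01}, and duality gives \eqref{equation02}.
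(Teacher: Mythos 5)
Your reduction and the two endpoint cases are fine, and your duality deduction of \eqref{equation02} from \eqref{equation01} is exactly what the paper does (the paper, incidentally, does not prove \eqref{equation01} at all: it quotes the reference [chending] for it). But for \eqref{equation01} your argument has a genuine gap precisely at the step you label the ``main obstacle'': the uniform bound $(\star)$ for $s\in(0,1)$ is asserted, not proved, and the mechanism you sketch cannot deliver it. Finite overlap in $l$ of the shifted annuli $\{\widehat{\Psi_l}(\cdot-s\eta)\}_l$ \emph{for each fixed $\eta$} does not give almost orthogonality of the family $\{T_l^s f\}_l$: for a fixed output frequency $\xi$, every scale $l$ receives a contribution, coming from the frequencies $\eta$ of $\nabla a$ with $|\xi-s\eta|\sim 2^l$, and these contributions pile up at the same $\xi$. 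To pass the $\ell^2_l$-sum inside the $\eta$-integral you would need a Cauchy--Schwarz or Minkowski inequality in $\eta$ against $|\widehat{\nabla a}(\eta)|\,d\eta$, which requires $\widehat{\nabla a}\in L^1$ (a Wiener-algebra hypothesis) and produces $\|\widehat{\nabla a}\|_{L^1}$, not $\|\nabla a\|_{L^\infty}$; for a general Lipschitz $a$, $\widehat{\nabla a}$ is only a distribution and this step simply has no meaning. Nor can a ``linear change of variables decouple the $s$-shift'': for $s$ strictly between $0$ and $1$ the bilinear symbol $\sum_l\widehat{\Psi_l}(\eta_2+(1-s)\eta_1)$ is singular along the non-degenerate line $\eta_2=-(1-s)\eta_1$, a modulation-invariant (bilinear-Hilbert-transform type) situation that no affine substitution reduces to the convolution cases $s=0,1$; this is exactly why the intermediate range is hard.

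There is also a structural issue with the strategy itself: by writing $a(x)-a(y)$ through $\nabla a(y+s(x-y))$ and asking for $(\star)$ uniformly in $s$ before integrating, you are trying to prove something strictly stronger than \eqref{equation01}, which only concerns the $s$-average; it is not clear that the uniform statement holds with only $\|\nabla a\|_{L^\infty}$ control, and in any case no known elementary Plancherel argument gives it. The proofs in the literature (including the cited one) keep the commutator structure and decompose $a$ itself into Littlewood--Paley pieces relative to the scale $2^l$: high-frequency pieces are handled via $\|P_k a\|_{L^\infty}\lesssim 2^{-k}\|\nabla a\|_{L^\infty}$ combined with orthogonality of the inputs $S_l f$, and low-frequency pieces via the cancellation in $[P_{<l}a,\,S_l]$ together with frequency localization of the output. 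Your representation discards exactly this commutator cancellation, so as written the proof of \eqref{equation01} is incomplete at its crucial point; either supply a genuine argument for the intermediate-$s$ bilinear square function (this would require substantially more than Plancherel) or revert to a frequency decomposition of $a$ as in the cited reference.
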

Inequality (\ref{equation01}) was proved in \cite{chending}, while (\ref{equation02}) follows
from (\ref{equation01}) by a standard duality argument.

The following two lemmas were from \cite{chenhutao}, see also \cite{mahu, chenli} for their  other  versions of special cases.
\begin{lemma}\label{lem0.2} Let $D$, $E$ be positive constants and $E\leq 1$,  $m$ be a  multiplier
such that $m\in L^1(\mathbb{R}^d)$, and
$$\|m\|_{L^{\infty}(\mathbb{R}^d)}\le D^{-1}E$$
and for all multi-indices $\gamma\in\mathbb{Z}_+^d$,
$$\|\partial^{\gamma}m\|_{L^{\infty}(\mathbb{R}^d)}\le D^{|\gamma|-1}.
$$
Let $a$ be a function on $\mathbb{R}^d$ with $\nabla a\in L^{\infty}(\mathbb{R}^d)$, and $T_m$ be the multiplier operator defined by
$$\widehat{T_{m}f}(\xi) = m(\xi)\widehat{f}(\xi).$$
Then for any $\varepsilon\in (0,\,1)$, there exists a constant $C_{\varepsilon}>0$ such that
$$\|[a,\,T_{m}]f\|_{L^2(\mathbb{R}^d)}\le C_{\varepsilon} E^{\varepsilon}\|f\|_{L^2(\mathbb{R}^d)}.
$$
\end{lemma}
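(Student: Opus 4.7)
The plan is to perform a Littlewood-Paley decomposition of the multiplier $m$ and exploit the commutator bounds of Lemma~\ref{lem0.1}. Using the resolution of identity $\sum_l\phi^3(2^{-l}\xi)=1$, I write $m=\sum_l m_l$ with $m_l=m\phi^3(2^{-l}\cdot)$ and, since all Fourier multipliers commute, factor $T_{m_l}=S_lT_{M_l}S_l$ with $M_l(\xi)=m(\xi)\phi(2^{-l}\xi)$. The Leibniz rule for commutators then gives
\[
[a,T_{m_l}]=[a,S_l]\,T_{M_l}\,S_l+S_l\,[a,T_{M_l}]\,S_l+S_l\,T_{M_l}\,[a,S_l],
\]
and summing in $l$ yields a decomposition $[a,T_m]f=\mathrm{I}+\mathrm{II}+\mathrm{III}$.

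The middle term $\mathrm{II}$ inherits Littlewood-Paley near-orthogonality from the outer $S_l$ factors, so $\|\mathrm{II}f\|_2^2\lesssim\sum_l\|[a,T_{M_l}]S_lf\|_2^2$. For each $l$, the Lipschitz property of $a$ bounds the per-scale commutator by $\|\nabla a\|_\infty\int|z|\,|\check M_l(z)|\,dz$, and rescaling $M_l$ to unit frequency turns the derivative hypotheses on $m$ into uniform estimates yielding $\int|z|\,|\check M_l(z)|\,dz\lesssim 1$. For the outer terms, I insert $2^l\cdot 2^{-l}$ around $[a,S_l]$ and apply (\ref{equation02}), reducing for instance $\|\mathrm{I}f\|_2$ to $\big\|\big(\sum_l|2^{-l}T_{M_l}S_lf|^2\big)^{1/2}\big\|_2$. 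By Plancherel this is comparable to $\int|\widehat f(\xi)|^2|m(\xi)|^2|\xi|^{-2}\,d\xi$; on $|\xi|\gtrsim D^{-1}$ the size bound gives $|m|^2/|\xi|^2\lesssim E^2$, while on $|\xi|\lesssim D^{-1}$ I separate the constant part by writing $m=m(0)+(m-m(0))$, so that the $m(0)$-piece telescopes across the low-$l$ sum into a single commutator of $a$ with the low-pass filter $\sum_{l\le l_0}P_l$ (controllable by $E\|\nabla a\|_\infty$ via kernel estimates), and the remainder benefits from $|m(\xi)-m(0)|\le|\xi|$.

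To obtain the interpolation gain $C_\varepsilon E^\varepsilon$ for arbitrary $\varepsilon\in(0,1)$ rather than a clean power of $E$, I would combine the above bound (which is essentially $\|\nabla a\|_\infty$-controlled and $E$-free at the $L^2\to L^2$ level) with the trivial multiplier bound $\|T_mf\|_2\le D^{-1}E\|f\|_2$, using a Stein-type complex interpolation or a direct $E$-level-set splitting argument. The main obstacle is the low-frequency contribution to the outer terms $\mathrm{I}$ and $\mathrm{III}$: since $m$ need not vanish at the origin, the naive Plancherel integral $\int|m(\xi)|^2|\xi|^{-2}|\widehat f(\xi)|^2\,d\xi$ diverges, and the required cancellation is only visible once one telescopes the $m(0)$-piece across the entire low-frequency portion of the Littlewood-Paley decomposition; this must be done carefully to avoid a $\log D$ loss and to match the $E^\varepsilon$ scaling after interpolation.
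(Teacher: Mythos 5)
The paper does not actually prove Lemma \ref{lem0.2}; it is quoted from \cite{chenhutao}, so there is no in-paper proof to compare against, and your proposal must stand on its own. As written it does not, for two concrete reasons. First, the factor $E^{\varepsilon}$ is never actually produced. Your bound for the middle term $\mathrm{II}$ is $O(\|\nabla a\|_{\infty})$ with no $E$ in it, and the proposed endgame --- interpolating this against the operator bound $\|T_m\|_{L^2\to L^2}\le D^{-1}E$ --- is not a legitimate step: $[a,T_m]$ and $T_m$ are different operators, and no analytic family or decomposition connecting them is exhibited; smallness of $\|T_m\|$ gives no information about the commutator. Indeed, for $a(x)=x_j$ one has $[a,T_m]=c\,T_{\partial_j m}$, whose norm equals $\|\partial_j m\|_{\infty}$, and this is small only because of the \emph{higher-order} derivative hypotheses, via a Landau--Kolmogorov type interpolation $\|\partial_j m\|_{\infty}\lesssim \|m\|_{\infty}^{1-1/k}\,\sup_{|\gamma|=k}\|\partial^{\gamma}m\|_{\infty}^{1/k}\lesssim E^{1-1/k}$; that quantitative use of the derivative bounds is exactly the mechanism your sketch never invokes, and it is where $E^{\varepsilon}$ for arbitrary $\varepsilon<1$ must come from. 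Second, the per-scale claim $\int |z|\,|\check M_l(z)|\,dz\lesssim 1$ uniformly in $l$ is unjustified and is false in general for $2^{l}\gg D^{-1}$: rescaling $M_l$ to unit frequency gives a symbol whose $\gamma$-derivatives are only bounded by about $D^{-1}(2^{l}D)^{|\gamma|}$, i.e.\ a very rough symbol, and for instance a sum of $\pm$-modulated bumps spaced $D^{-1}$ apart inside the annulus $|\xi|\sim 2^{l}$ satisfies the hypotheses while $\int|z|\,|\check M_l(z)|\,dz\gtrsim E(2^{l}D)^{d/2}\to\infty$. Since your estimate of $\mathrm{II}$ requires this bound uniformly in $l$ (after the almost-orthogonality reduction), the step collapses; per-scale control has to be extracted on the Plancherel side (as in the paper's uses of Lemma \ref{lem0.1} and Lemma \ref{lem0.3}), not from pointwise kernel size.

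The low-frequency treatment of $\mathrm{I}$ and $\mathrm{III}$ has the same defect even where you sketch a fix: $|m(\xi)-m(0)|\le|\xi|$ carries no smallness, so the scales $2^{l}\lesssim D^{-1}E$ still only yield an $O(1)$ contribution, and obtaining a gain there again requires feeding in higher-order Taylor information with coefficients of size $E^{1-1/k}$. In short, the skeleton you chose (Littlewood--Paley pieces, the Leibniz rule $[a,S_lT_{M_l}S_l]=[a,S_l]T_{M_l}S_l+S_l[a,T_{M_l}]S_l+S_lT_{M_l}[a,S_l]$, and the square-function bounds (\ref{equation01})--(\ref{equation02})) is the right kind of machinery --- it is how the paper deploys this lemma elsewhere --- but the two essential analytic ingredients, a per-scale $L^2$ commutator estimate that sees cancellation and a quantitative use of all the derivative bounds to convert $O(1)$ into $E^{\varepsilon}$, are missing, so the proposal does not prove the statement.
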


\begin{lemma}\label{lem0.3}Let $D$, $A$ and $B$ be positive constants with $A,\,B<1$,  $m$ be a  multiplier
such that $m\in L^1(\mathbb{R}^d)$, and
$$\|m\|_{L^{\infty}(\mathbb{R}^d)}\le D^{-1}(AB)^{2},$$
and for all multi-indices $\gamma\in\mathbb{Z}^d_+$,
$$\|\partial^{\gamma}m\|_{L^{\infty}(\mathbb{R}^d)}\le D^{|\gamma|-1}B^{-|\gamma|}.
$$
Let $T_m$ be the multiplier operator defined by
$$\widehat{T_{m}f}(\xi) = m(\xi)\widehat{f}(\xi).$$
Let $a$ be a function on $\mathbb{R}^d$ such that $\nabla a\in L^{\infty}(\mathbb{R}^d)$. Then for any $\sigma\in (0,\,1)$,
$$\big\|[a,\,T_{m}]f\big\|_{L^2(\mathbb{R}^d)}\lesssim A^{\sigma}B^{\sigma}\|f\|_{L^2(\mathbb{R}^d)}.
$$
\end{lemma}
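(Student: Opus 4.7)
My plan is to deduce Lemma \ref{lem0.3} from Lemma \ref{lem0.2} by means of a frequency dilation that absorbs the parameter $B$ into the multiplier. The key observation is that the derivative bound $\|\partial^\gamma m\|_\infty\le D^{|\gamma|-1}B^{-|\gamma|}$ in Lemma \ref{lem0.3} is exactly what is obtained from the Lemma \ref{lem0.2} hypothesis after scaling by $B^{-1}$ in frequency, so no further estimate for the commutator is needed; everything follows from Lemma \ref{lem0.2} plus bookkeeping.

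Concretely, I would introduce the unitary dilation $U_B f(x)=B^{d/2}f(Bx)$ on $L^{2}(\mathbb{R}^{d})$, together with the rescaled symbol $\tilde m(\eta)=m(B\eta)$ and the rescaled function $\tilde a(x)=a(x/B)$. A direct Fourier-transform computation yields $U_B^{-1}T_m U_B = T_{\tilde m}$ and $U_B^{-1}M_a U_B = M_{\tilde a}$, hence
\[
U_B^{-1}[a,T_m]U_B = [\tilde a, T_{\tilde m}],
\]
so $\|[a,T_m]\|_{L^2\to L^2}=\|[\tilde a,T_{\tilde m}]\|_{L^2\to L^2}$ because $U_B$ is unitary. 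Then I would check the hypotheses of Lemma \ref{lem0.2} for $\tilde m$: we have $\|\tilde m\|_\infty=\|m\|_\infty\le D^{-1}(AB)^2$, and $\|\partial^\gamma\tilde m\|_\infty = B^{|\gamma|}\|\partial^\gamma m\|_\infty\le D^{|\gamma|-1}$, so Lemma \ref{lem0.2} applies with $E=(AB)^2$, which lies in $(0,1]$ since $A,B<1$. Meanwhile $\|\nabla\tilde a\|_\infty = B^{-1}\|\nabla a\|_\infty$.

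Since the commutator $[a,T_m]$ is linear in $a$, the constant in Lemma \ref{lem0.2} must depend linearly on $\|\nabla a\|_\infty$. Applying Lemma \ref{lem0.2} to $[\tilde a,T_{\tilde m}]$ thus gives, for every $\varepsilon\in(0,1)$,
\[
\|[a,T_m]f\|_{L^2}\;=\;\|[\tilde a,T_{\tilde m}]f\|_{L^2}\;\le\;C_\varepsilon\,(AB)^{2\varepsilon}\,B^{-1}\,\|\nabla a\|_\infty\,\|f\|_{L^2}\;=\;C_\varepsilon A^{2\varepsilon}B^{2\varepsilon-1}\|\nabla a\|_\infty\|f\|_{L^2}.
\]
Given $\sigma\in(0,1)$, I would then choose $\varepsilon=(1+\sigma)/2\in(1/2,1)$ so that $2\varepsilon=1+\sigma$ and $2\varepsilon-1=\sigma$; since $A<1$, $A^{1+\sigma}\le A^\sigma$, and the stated bound $\|[a,T_m]f\|_{L^2}\lesssim A^\sigma B^\sigma\|f\|_{L^2}$ follows.

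The routine parts are the intertwining identity and the derivative estimate for $\tilde m$; the only point requiring care is ensuring the linearity of the Lemma \ref{lem0.2} constant in $\|\nabla a\|_\infty$, which as noted is automatic from the commutator being linear in $a$. No independent handling of $B$ is needed because the $B^{-|\gamma|}$ factors are designed precisely so that the frequency dilation normalizes all derivative bounds simultaneously.
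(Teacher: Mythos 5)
Your argument is correct, and it is genuinely different from what the paper does: the paper offers no proof of Lemma \ref{lem0.3} at all, importing both Lemma \ref{lem0.2} and Lemma \ref{lem0.3} as black boxes from \cite{chenhutao}, whereas you show that Lemma \ref{lem0.3} is in fact a rescaled corollary of Lemma \ref{lem0.2}. The intertwining computations are right: with $U_Bf(x)=B^{d/2}f(Bx)$ one gets $U_B^{-1}T_mU_B=T_{\tilde m}$, $\tilde m(\xi)=m(B\xi)$, and $U_B^{-1}M_aU_B=M_{\tilde a}$, $\tilde a(x)=a(x/B)$; the rescaled symbol satisfies $\|\tilde m\|_\infty\le D^{-1}(AB)^2$ and $\|\partial^\gamma\tilde m\|_\infty\le D^{|\gamma|-1}$ with the same $D$, so Lemma \ref{lem0.2} with $E=(AB)^2$ and $\varepsilon=(1+\sigma)/2$ gives $\|[a,T_m]f\|_{L^2}\lesssim \|\nabla a\|_{L^\infty}A^{1+\sigma}B^{\sigma}\|f\|_{L^2}\le \|\nabla a\|_{L^\infty}A^{\sigma}B^{\sigma}\|f\|_{L^2}$, which is even slightly stronger than the stated bound. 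The one point that genuinely needs care is the one you flag, and your justification of it is slightly too quick: linearity in $a$ alone does not force the constant $C_\varepsilon$ of Lemma \ref{lem0.2} to be controlled by $\|\nabla a\|_{L^\infty}$; what you need is that $C_\varepsilon$ depends on $a$ \emph{only through} $\|\nabla a\|_{L^\infty}$ (after which homogeneity $[ta,T_m]=t[a,T_m]$ upgrades this to linear dependence). If the constant were allowed to depend on $a$ in any other way (say on $\|a\|_{L^\infty}$ or on a modulus of continuity of $\nabla a$), the dilation $a\mapsto a(\cdot/B)$ would not be harmless and the reduction would break. This uniformity does hold in the source \cite{chenhutao} and is how the lemma is used throughout the present paper, so your proof is valid as a derivation of Lemma \ref{lem0.3} from (the quantitatively stated form of) Lemma \ref{lem0.2}; what it buys is a transparent explanation of the interplay between the parameters $A$, $B$, $D$, at the cost of resting entirely on the uniform, $\|\nabla a\|_{L^\infty}$-only dependence in Lemma \ref{lem0.2}, which a direct proof (as in \cite{chenhutao}) establishes from scratch.
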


Let $\eta$ and $\eta_j$ with $j\in\mathbb{Z}$ be the same as in Remark \ref{rem1.3}.
For $j\in\mathbb{Z}$ and $\Omega$ be the function on $S^{d-1}$, let $K_j(x)=\frac{\Omega(x)}{|x|^{d+1}}\eta_j(x)$.
Let  $\psi\in C^{\infty}_0(\mathbb{R}^d)$ be a nonnegative radial function such that
${\rm supp}\, \psi\subset \{x:\,|x|\leq 1/4\}$ and $\int_{\mathbb{R}^d}\psi(x)dx=1$.
For $t\in \mathbb{R}$, set $\psi_t(x) = 2^{dt}\psi(2^{t}x)$. For a positive integer $l$, define
$$K^{l}(x)=\sum_{j\in\mathbb{Z}}K_j*\psi_{l-j}(x).$$
Let $T_{l,a}$ be the operators defined by
\begin{eqnarray}\label{equation2.tl}T_{l,a}f(x)={\rm p.\,v.}\int_{\mathbb{R}^d}K^l(x-y)\big (a(x)-a(y)\big)f(y)dy.\end{eqnarray}
Also, we  define the operators $R_{l,a}$ and $W_{l,a}$ by
$$R_{l,a}f(x)={\rm p.\,v.}\int_{\mathbb{R}^d}\big(K^{l+1}(x-y)-K^l(x-y)\big)(a(x)-a(y))f(y)dy
$$
and
$$W_{l,a}f(x)={\rm p.\,v.}\int_{\mathbb{R}^d}\big(\frac{\Omega(x-y)}{|x-y|^{d+1}}-K^l(x-y)\big)(a(x)-a(y))f(y)dy.
$$

\begin{lemma}\label{lem2.grafa}Let $T$ be an operator which has kernel $K$ in the
sense that, for bounded function $f$ with compact support, and for  $x\in \mathbb{R}^d\backslash {\rm supp}\,f$,
$$Tf(x)=\int_{\mathbb{R}^d}K(x,\,y)f(y)dy.$$
Let $T^*$ be the maximal singular integral operator associated with $T$, defined by
$$T^*f(x)=\sup_{0<\epsilon<R}\Big|\int_{\epsilon<|x-y|\leq R}K(x,\,y)f(y)dy\Big|.
$$
Suppose that $T$ has an extension
which is bounded on $L^2(\mathbb{R}^d)$ with bound $B$, the kernel of $T$ satisfies the size condition that
\begin{eqnarray}\label{equation2.size'}
\sup_{R>0}\int_{R<|x-y|\leq 2R}|K(x,\,y)|dy+\int_{R<|x-y|\leq 2R}|K(x,\,y)|dx\le A_1<\infty,
\end{eqnarray}
and the regularity condition that, for $ y,\,y'\in\mathbb{R}^d$ with $y\not =y'$,
\begin{eqnarray}\label{equation2.regular'}
&&\int_{|x-y|\geq 2|y-y'|}|K(x,y)-K(x,y')|dx\\
&&\quad+\int_{|x-y|\geq 2|y-y'|}|K(y,x)-K(y',x)|dx\lesssim A_2<\infty,\nonumber
\end{eqnarray}
Then $T^*$ is bounded on $L^p(\mathbb{R}^d)$ with bound $C(A_1+A_2+B)\max\{p,\,(p-1)^{-1}\}$, and bounded from $L^1(\mathbb{R}^d)$ to $L^{1,\,\infty}(\mathbb{R}^d)$ with bound
$C(A_1+A_2+B)$.
\end{lemma}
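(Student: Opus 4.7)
The plan is to run the classical Calderón--Zygmund/Cotlar strategy. The three hypotheses---$L^2$ boundedness with constant $B$, the size condition with constant $A_1$, and the Hörmander-type regularity in both variables with constant $A_2$---are exactly what is needed for the standard CZ machinery, and the maximal operator $T^*$ is then handled by a pointwise Cotlar-type inequality (for the $L^p$ bound) and by a direct CZ decomposition (for the endpoint).

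First, I would invoke the classical Calderón--Zygmund theorem for $T$ itself: starting from the $L^2$ bound and the Hörmander regularity in the second variable (which is all that is needed to control $\|Tb_j\|_{L^1(\mathbb{R}^d\setminus 2Q_j)}$ for the atoms $b_j$ in the bad part), one gets that $T$ extends to a bounded operator on $L^p(\mathbb{R}^d)$ for $1<p<\infty$ and from $L^1$ to $L^{1,\infty}$, with norms $\lesssim (A_1+A_2+B)\max\{p,(p-1)^{-1}\}$ and $\lesssim A_1+A_2+B$ respectively.

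Next, to promote these bounds to $T^*$, I would establish Cotlar's inequality. Fix $x$ and $\epsilon>0$; set $f_1=f\chi_{B(x,\epsilon)}$ and $f_2=f-f_1$. For any $z\in B(x,\epsilon/2)$ one has the identity
\[\int_{|x-y|>\epsilon}K(x,y)f(y)\,dy = Tf(z)-Tf_1(z)+\int[K(x,y)-K(z,y)]f_2(y)\,dy.\]
Averaging the absolute value over $z\in B(x,\epsilon/2)$, the first term produces $M(Tf)(x)$; the $Tf_1$ term, via Kolmogorov's inequality and the weak-$(1,1)$ bound just established for $T$, produces $(A_1+A_2+B)Mf(x)$; and the kernel-difference term, via the Hörmander condition in the first variable (the second integral in (\ref{equation2.regular'})) combined with a dyadic decomposition on $\{|x-y|>\epsilon\}$, also produces $A_2 Mf(x)$. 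Handling the outer $R$-truncation symmetrically yields the pointwise estimate
\[T^*f(x)\lesssim M(Tf)(x)+(A_1+A_2+B)Mf(x),\]
from which the claimed $L^p$ bound follows at once, using the known $p'$-growth of $\|M\|_{L^p\to L^p}$.

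For the weak-$(1,1)$ endpoint for $T^*$ the Cotlar inequality is not enough, so I would perform a direct CZ decomposition of $f$ at level $\lambda$. The good part is controlled by the $L^2$ bound for $T^*$ just obtained. For the atoms $b_j$ supported in cubes $Q_j$, the task is to bound $T^*b_j(x)$ pointwise on $\{x\notin 2Q_j\}$ by a quantity whose $L^1$ norm in $x$ is $\lesssim (A_1+A_2)\|b_j\|_{L^1}$. Depending on whether $\epsilon$ is smaller or larger than a multiple of $\mathrm{dist}(x,Q_j)$, the truncated integral either coincides with $Tb_j(x)$ (handled via the Hörmander condition in the second variable together with the usual telescoping/cancellation trick) or has domain disjoint from $Q_j$, in which case the size condition $A_1$ together with $|x-y|\approx\mathrm{dist}(x,Q_j)$ gives a direct bound. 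Summing in $j$ and combining with the bad-cube measure bound closes the argument. The main obstacle is precisely this uniform-in-$\epsilon$ control in the bad-part analysis: one must verify that taking the supremum over truncations does not disturb the cancellation that makes the standard CZ bound work for $T$. Once that case split is in hand, the remainder is routine.
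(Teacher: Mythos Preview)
Your proof sketch is correct and follows the standard Cotlar/Calder\'on--Zygmund route for maximal singular integrals. Note, however, that the paper does not actually prove this lemma: it simply cites it as Theorem~1 in Grafakos~\cite{gra0}. Your outline is essentially the argument carried out in that reference, so there is no discrepancy in substance---you have supplied a proof where the paper only gives a citation.
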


Lemma \ref{lem2.grafa}  is Theorem 1 in \cite{gra0}.

The following theorem is also useful in the proof of Theorem \ref{dingli1.main}.
\begin{theorem}\label{dingli2.9}
Let $\Omega$ be homogeneous of degree zero, satisfies the vanishing moment (\ref{equation1.1})
and $\Omega\in L\log L(S^{d-1})$,
$a$ be a function on $\mathbb{R}^d$ such that $\nabla a\in L^{\infty}(\mathbb{R}^d)$.
Then the lacunary maximal operator associated with $T_{l,a}$, defined by
$$T_{l,a}^{**}f(x)=\sup_{k\in\mathbb{Z}}\Big|\sum_{j>k}\int_{\mathbb{R}^d}K_j*
\psi_{l-j}(x-y)(a(x)-a(y))f(y)dy\Big|,
$$
is bounded  from $L^1(\mathbb{R}^d)$ to $L^{1,\,\infty}(\mathbb{R}^d)$ with bound $C(\|\Omega\|_{L\log L(S^{d-1})}+1)2^l$.
\end{theorem}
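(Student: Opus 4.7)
The strategy is to apply Lemma~\ref{lem2.grafa} to the operator $T_{l,a}$, whose maximal truncated singular integral $T_{l,a}^{*}$ dominates $T_{l,a}^{**}$ up to a Hardy--Littlewood maximal error. Indeed, for each $k$ the difference $\sum_{j>k}F_j(z)-\chi_{|z|>2^k}K^l(z)$, where $F_j=K_j*\psi_{l-j}$, is supported in $\{|z|\in[2^{k-1},2^{k+2}]\}$ and involves only finitely many $j\approx k$; after multiplying by $(a(x)-a(y))$ and invoking the Lipschitz bound on $a$, it is pointwise controlled by $\|\Omega\|_{L^1(S^{d-1})}\|\nabla a\|_{\infty}M|f|(x)$, where $M$ is the Hardy--Littlewood maximal function. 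Thus it suffices to verify the three hypotheses of Lemma~\ref{lem2.grafa} for $T_{l,a}$ with all constants bounded by $(\|\Omega\|_{L\log L(S^{d-1})}+1)2^{l}$.

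For the size condition~(\ref{equation2.size'}), I would use $|a(x)-a(y)|\le\|\nabla a\|_{\infty}|x-y|$ to reduce matters to $\int_{R<|z|\le 2R}|z||K^l(z)|\,dz$. Each $F_j$ is essentially concentrated in $\{|z|\approx 2^j\}$ with $\|F_j\|_{L^1}\lesssim 2^{-j}\|\Omega\|_{L^1(S^{d-1})}$, so only boundedly many $j$'s contribute to a given dyadic annulus, and one obtains $A_1\lesssim\|\Omega\|_{L\log L(S^{d-1})}$.

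For the regularity condition~(\ref{equation2.regular'}), the smoothness of $\psi_{l-j}$ yields $\|\nabla F_j\|_{L^1}\lesssim 2^{l-2j}\|\Omega\|_{L^1(S^{d-1})}$. Splitting the kernel difference into $(K^l(x-y)-K^l(x-y'))(a(x)-a(y))$ and $K^l(x-y')(a(y')-a(y))$, applying the mean value theorem to each piece (using the Lipschitz bound on $a$ to absorb one extra factor of $|z|$ in the first), and summing dyadically in $j$, one obtains $A_2\lesssim 2^{l}\|\Omega\|_{L^1(S^{d-1})}$; the $x$-variable half of (\ref{equation2.regular'}) is identical by symmetry.

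The main obstacle is the $L^2$ boundedness of $T_{l,a}$ with bound $\lesssim 2^{l}(\|\Omega\|_{L\log L(S^{d-1})}+1)$. I would write $T_{l,a}f=\sum_j[a,F_j*]f$ and analyze each commutator via a Littlewood--Paley resolution of the multiplier $\hat F_j(\xi)=\hat K_j(\xi)\hat\psi(2^{j-l}\xi)$, applying the multiplier commutator estimates of Lemmas~\ref{lem0.2} and \ref{lem0.3} at scale $D\approx 2^{j}$ to each piece. The vanishing moment~(\ref{equation1.1}) provides the low-frequency cancellation needed to satisfy the smallness hypotheses of those lemmas, while a dyadic decomposition $\Omega=\sum_\mu\Omega_\mu$ by the size of $|\Omega|$ converts the $L\log L(S^{d-1})$ norm into a summable weighted series of such bounds. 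Combining these with the vector-valued Littlewood--Paley estimate of Lemma~\ref{lem0.1} delivers the desired operator-norm bound, and Lemma~\ref{lem2.grafa} then closes the argument.
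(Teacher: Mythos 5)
Your overall architecture --- dominating $T_{l,a}^{**}$ by a truncated maximal singular integral plus a maximal-function error, verifying the size and regularity hypotheses of Lemma \ref{lem2.grafa} with $A_1\lesssim \|\Omega\|_{L^1(S^{d-1})}$ and $A_2\lesssim 2^l\|\Omega\|_{L^1(S^{d-1})}$, and then feeding an $L^2$ bound for $T_{l,a}$ into that lemma --- is exactly the paper's, and your size/regularity computations are correct. One small imprecision: the discrepancy between the lacunary sum $\sum_{j>k}K_j*\psi_{l-j}$ and the sharp truncation of $K^l$ at radius $2^k$, after multiplication by $a(x)-a(y)$, is not pointwise dominated by $\|\Omega\|_{L^1(S^{d-1})}\,Mf(x)$ when $\Omega$ is merely integrable; it is dominated by a maximal function of $M_{\Omega}$ type, which is still of weak type $(1,1)$ under $\Omega\in L\log L(S^{d-1})$, so this part survives.

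The genuine gap is in your $L^2$ step. Lemmas \ref{lem0.2} and \ref{lem0.3} require smallness of $\|m\|_{L^{\infty}}$ relative to $D^{-1}$, and for the frequency-localized pieces $\widehat{K_j}(\xi)\widehat{\psi}(2^{j-l}\xi)\phi(2^{j-m}\xi)$ with $m\le 0$ (frequencies $|\xi|\lesssim 2^{-j}$) one only has $\|\cdot\|_{L^{\infty}}\approx 2^{-j}\|\Omega\|_{L^1(S^{d-1})}$ with no gain in $m$: the vanishing moment (\ref{equation1.1}) forces $\nabla\widehat{K_j}(0)=0$ but not $\widehat{K_j}(0)=0$, since $\Omega$ need not have mean value zero. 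So the smallness hypotheses fail on the low-frequency bands and your sum over those bands does not close; making precisely that sum converge is the content of Calder\'on's $L^2$ theorem for $T_{\Omega,a}$, which cannot be recovered from Lemmas \ref{lem0.2}--\ref{lem0.3} alone. The paper sidesteps re-proving it: it bounds the \emph{difference} $W_{l,a}=T_{\Omega,a}-T_{l,a}$ on $L^2$ using only the size of $\Omega$ --- there the factor $1-\widehat{\psi}(2^{j-l}\xi)=O(|2^{j-l}\xi|^2)$ supplies exactly the low-frequency smallness recorded in (\ref{eqn2.four1}) that those lemmas need, and the level-set decomposition $\Omega=\sum_i\Omega_i$ converts the $L\log L$ norm into a summable series --- and then invokes the known $L^2$ boundedness of $T_{\Omega,a}$ under (\ref{equation1.1}) and $\Omega\in L\log L(S^{d-1})$ to transfer the bound to $T_{l,a}$, with constant independent of $l$. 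You should either adopt this comparison argument or supply an actual mechanism by which (\ref{equation1.1}) rescues the low-frequency sum; as written, that step of your proposal does not work.
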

\begin{proof}
At first, we claim that if $\Omega\in L^{\infty}(S^{d-1})$ (without the vanishing
moment (\ref{equation1.1})), then
\begin{eqnarray}\label{equation2.3}
\|W_{l,a}f\|_{L^2(\mathbb{R}^d)}\lesssim  \|\Omega\|^*\|f\|_{L^2(\mathbb{R}^d)},
\end{eqnarray}with $$\|\Omega\|^*=\|\Omega\|_{L^1(S^{d-1})}\log(2+\|\Omega\|_{L^{\infty}(S^{d-1})})
+\frac{1}{2+\|\Omega\|_{L^{\infty}(S^{d-1})}}.$$
To prove this,  let $\phi$ be the function in Lemma \ref{lem0.1} and $S_l$ be the operator defined as
$\widehat{S_lf}(\xi)=\phi(2^{-l}\xi)\widehat{f}(\xi).$ Write
\begin{eqnarray*}
W_{l,a}f=\sum_{m\leq 0}\sum_{j\in\mathbb{Z}}[a,S_{m-j}V_{j,l}^mS_{m-j}]f+\sum_{m\ge 1}\sum_{j\in\mathbb{Z}}
[a,\,S_{m-j}V_{j,l}^{m}S_{m-j}]f,
\end{eqnarray*}
where $V_{j,l}^m$   are defined as
$$V_{j,l}^mh(x)=\sigma_{j,l}^m*h(x),$$
with $$\widehat{\sigma_{j,l}^m}(\xi)=\widehat{K_j}(\xi)\big(1-\widehat{\psi}(2^{j-l}\xi)\big)
\phi(2^{j-m}\xi).$$
As it was proved in \cite{duoru}, we have that for some constant $\alpha\in (0,\,1)$,
\begin{eqnarray}\label{eq2.fourier222}|\widehat{K_j}(\xi)|\lesssim 2^{-j}\|\Omega\|_{L^{\infty}(S^{d-1})}|2^j\xi|^{-\alpha},\,\,|\widehat{K_j}(\xi)|
\lesssim 2^{-j}\|\Omega\|_{L^{1}(S^{d-1})}.\end{eqnarray}
On the other hand, we have that $\partial_k\widehat{\psi}(0)=0$  for all $1\leq k\leq d$, and so
\begin{eqnarray}\label{eq2.fourier333}\big|\widehat{\psi}(2^{j-l}\xi)-1|=|\widehat{\psi}(2^{j-l}\xi)-1-\nabla\widehat{\psi}(0)
\cdot 2^{j-l}\xi\big|\lesssim |2^{j-l}\xi|^2.
\end{eqnarray}
It then follows from  (\ref{eq2.fourier222}) and (\ref{eq2.fourier333}) that
\begin{eqnarray}\label{eqn2.four1}\big|\widehat{\sigma_{j,l}^m}(\xi)\big|\lesssim 2^{-j} \min\{1,\,2^{2(m-l)}\}\|\Omega\|_{L^1(S^{d-1})},
\end{eqnarray}
and
\begin{eqnarray}\label{eqn2.four2}\big|\widehat{\sigma_{j,l}^m}(\xi)\big|\lesssim
\|\Omega\|_{L^{\infty}(S^{d-1})}2^{-j}  2^{ -m\alpha}.
\end{eqnarray}
Moreover, for all multi-indices $\gamma\in \mathbb{Z}_+^d$,
\begin{eqnarray}\label{eqn2.four3}|\partial^{\gamma}\widehat{\sigma}_{j,l}^m(\xi)|\lesssim \|\Omega\|_{L^1(S^{d-1})}\Big \{\begin{array}{ll}2^{j(|\gamma|-1)}\,\,&\hbox{if}\,\,m\in\mathbb{N}\\
2^{j(|\gamma|-1)}2^{-|\gamma|m},\,\,&\hbox{if}\,\,m\leq 0.\end{array}
\end{eqnarray}
It now follows from Lemma \ref{lem0.3} (with $A=2^{-l}$, $B=2^m$ and $D=2^j$), (\ref{eqn2.four1}) and (\ref{eqn2.four3}) that when $m\in\mathbb{Z}_-$,
\begin{eqnarray}\label{equation2.26}
\|[a,\,V_{j,l}^{m}]f\|_{L^2(\mathbb{R}^d)}\lesssim \|\Omega\|_{L^1(S^{d-1})} 2^{(m-l)\sigma}
\|f\|_{L^2(\mathbb{R}^d)},
\end{eqnarray}
with $\sigma\in (0,\,1)$. On the other hand, we get by Lemma \ref{lem0.2}, (\ref{eqn2.four2}) and (\ref{eqn2.four3}) that when $m\in\mathbb{Z}_+$,
\begin{eqnarray}\label{eq2.commutator2}
\|[a,\,V_{j,l}^{m}]f\|_{L^2(\mathbb{R}^d)}\lesssim \|\Omega\|_{L^{\infty}(S^{d-1})} 2^{-m\alpha\sigma}
\|f\|_{L^2(\mathbb{R}^d)}.
\end{eqnarray}
Also, we obtain from Lemma \ref{lem0.3}, inequalities (\ref{eqn2.four1}) and (\ref{eqn2.four3}) that when $m\in\mathbb{Z}_+$,
\begin{eqnarray}\label{eq2.commutator3}
&&\|[a,\,V_{j,l}^{m}]f\|_{L^2(\mathbb{R}^d)}\lesssim \|\Omega\|_{L^{1}(S^{d-1})}\|f\|_{L^2(\mathbb{R}^d)}.
\end{eqnarray}

We first consider the case $m\leq 0$. By Lemma \ref{lem0.1}, (\ref{eqn2.four1}) and Plancherel's theorem that
\begin{eqnarray*}
\Big\|\sum_{j\in\mathbb{Z}}[a,S_{m-j}]V_{j,l}^mS_{m-j}f\Big\|^2_{L^2(\mathbb{R}^d)}&\lesssim &
\sum_{j\in\mathbb{Z}}2^{2(j-m)}\|V_{j,l}^{m}S_{m-j}f\|_{L^2(\mathbb{R}^d)}^2\\
&\lesssim& \|\Omega\|_{L^1(S^{d-1})}^22^{2(m-l)}\sum_{j\in\mathbb{Z}}\|S_{m-j}f\|_{L^2(\mathbb{R}^d)}^2\\
&\lesssim&\|\Omega\|_{L^1(S^{d-1})}^22^{2(m-l)}\|f\|_{L^2(\mathbb{R}^d)}^2,
\end{eqnarray*}
and
\begin{eqnarray*}
\Big\|\sum_{j\in\mathbb{Z}}S_{m-j}V_{j,l}^m[a,\,S_{m-j}]f\Big\|^2_{L^2(\mathbb{R}^d)}\lesssim \|\Omega\|_{L^1(S^{d-1})}^22^{2(m-l)}\|f\|_{L^2(\mathbb{R}^d)}^2.
\end{eqnarray*}
Plancherel's theorem, along with inequality (\ref{equation2.26}), leads to that
 \begin{eqnarray*}
\Big\|\sum_{j\in\mathbb{Z}}S_{m-j}[a,\,V_{j,l}^m]S_{m-j}f\Big\|^2_{L^2(\mathbb{R}^d)}\lesssim \|\Omega\|_{L^1(S^{d-1})}^22^{2(m-l)\sigma}\|f\|_{L^2(\mathbb{R}^d)}^2.
\end{eqnarray*}
Therefore,
\begin{eqnarray}\label{equation2.10x}\sum_{m\leq 0}\Big\|\sum_{j\in\mathbb{Z}}
[a,S_{m-j}V_{j,l}^mS_{m-j}]f\Big\|_{L^2(\mathbb{R}^d)}
\lesssim \|\Omega\|_{L^1(S^{d-1})}2^{-\sigma l}\|f\|_{L^2(\mathbb{R}^d)}.\end{eqnarray}

We turn our attention to the case  $m\geq 1$. From Lemma \ref{lem0.1} and (\ref{eqn2.four1}), we deduce that
\begin{eqnarray*}
\Big\|\sum_{j\in\mathbb{Z}}[a,S_{m-j}]V_{j,l}^{m}S_{m-j}f\Big\|^2_{L^2(\mathbb{R}^d)}&\lesssim &
\sum_{j\in\mathbb{Z}}2^{2(j-m)}\|V_{j,l}^{m}S_{m-j}f\|_{L^2(\mathbb{R}^d)}^2\\
&\lesssim&2^{-2m}\|\Omega\|_{L^{1}(S^{d-1})}^2\|f\|_{L^2(\mathbb{R}^d)}^2,
\end{eqnarray*}
and
\begin{eqnarray*}
\Big\|\sum_{j\in\mathbb{Z}}S_{m-j}V_{j,l}^{m}[a,\,S_{m-j}]f\Big\|^2_{L^2(\mathbb{R}^d)}\lesssim
2^{-2m\alpha}\|\Omega\|^2_{L^{\infty}(S^{d-1})}\|f\|_{L^2(\mathbb{R}^d)}^2.
\end{eqnarray*}
On the other hand, we get by (\ref{eq2.commutator3})  that
\begin{eqnarray*}
\Big\|\sum_{j\in\mathbb{Z}}S_{m-j}[a,\,V_{j,l}^{m}]S_{m-j}f\Big\|^2_{L^2(\mathbb{R}^d)}\lesssim
\|\Omega\|_{L^1(S^{d-1})}^2\|f\|_{L^2(\mathbb{R}^d)}^2,
\end{eqnarray*}
and by (\ref{eq2.commutator2}) that
\begin{eqnarray*}
\Big\|\sum_{j\in\mathbb{Z}}S_{m-j}[a,\,V_{j,l}^{m}]S_{m-j}f\Big\|^2_{L^2(\mathbb{R}^d)}\lesssim
2^{-m\alpha\sigma}\|\Omega\|_{L^{\infty}(S^{d-1})}^2\|f\|_{L^2(\mathbb{R}^d)}^2.
\end{eqnarray*}
These  estimates, in turn, imply that
\begin{eqnarray*}\Big\|\sum_{j\in\mathbb{Z}}[a,S_{m-j}V_{j,l}^{i,m}S_{m-j}]f\Big\|_{L^2(\mathbb{R}^d)}
 \lesssim \|\Omega\|_{L^1(S^{d-1})}\|f\|_{L^2(\mathbb{R}^d)},
\end{eqnarray*}
and
\begin{eqnarray*}\Big\|\sum_{j\in\mathbb{Z}}[a,S_{m-j}V_{j,l}^{i,m}S_{m-j}]f\Big\|_{L^2(\mathbb{R}^d)}
\lesssim 2^{-m\alpha\sigma}\|\Omega\|_{L^{\infty}(S^{d-1})}\|f\|_{L^2(\mathbb{R}^d)}.
\end{eqnarray*}
Let $i_0=\lfloor\log (2+\|\Omega\|_{L^{\infty}(S^{d-1})})\rfloor+1,$ and $N=2(\alpha\sigma)^{-1}$. Note that  $$\sum_{m>Ni_0}
2^{-m\alpha\sigma}\lesssim 2^{-Ni_0\alpha\sigma}\lesssim \|\Omega\|^*.
$$ It then follows that
\begin{eqnarray*}
&&\sum_{m\geq 1}\Big\|\sum_{j\in\mathbb{Z}}
[a,\,S_{m-j}V_{j,l}^mS_{m-j}]f\Big\|_{L^2(\mathbb{R}^d)}\\
&&\quad\leq \sum_{1\leq m\leq Ni_0}\Big\|\sum_{j\in\mathbb{Z}}[a,S_{m-j}V_{j,l}^{m}S_{m-j}]f\Big\|_{L^2(\mathbb{R}^d)} \\
&&\qquad+\sum_{m>Ni_0}\Big\|\sum_{j\in\mathbb{Z}}[a,S_{m-j}V_{j,l}^{m}S_{m-j}]f\Big\|_{L^2(\mathbb{R}^d)} \\
&&\quad\lesssim \Big(Ni_0\|\Omega\|_{L^1(S^{d-1})}+\|\Omega\|_{L^{\infty}(S^{d-1})}\sum_{m>Ni_0}
2^{-m\alpha\sigma}\Big)\|f\|_{L^2(\mathbb{R}^d)}\\
&&\quad\lesssim \|\Omega\|^*\|f\|_{L^2(\mathbb{R}^d)}.
\end{eqnarray*}
This, along with (\ref{equation2.10x}), yields (\ref{equation2.3}).

We can now conclude the proof of Theorem \ref{dingli2.9}. Let $L_l(x,\,y)=K^l(x-y)(a(x)-a(y))$.
A trivial computation tells us that
$L_l$ satisfies the size condition  (\ref{equation2.size'}) with $A_1\lesssim C$,
and the regularity condition (\ref{equation2.regular'}) with $A_2\lesssim C2^l$.
Let $\Omega\in L\log L(S^{d-1})$, set $$E_0=\{x'\in S^{d-1}:  |\Omega(x')|\leq 1\}$$
and
$$
E_i=\{x'\in S^{d-1}:\,2^{i-1}<|\Omega(x')|
\leq 2^i\}\,\,\,(i\in\mathbb{N}).
$$
Denote
\begin{eqnarray}\label{equation2.decomposition}\Omega_0(x')=\Omega(x')\chi_{E_0}(x'),\,\,
\Omega_i(x')=\Omega(x')\chi_{E_i}(x')\,\,(i\in\mathbb{N}).\end{eqnarray}
Let $W_{l,\,a}^i$   be defined as $W_{l,\,a}$, but
with $\Omega$ replaced by $\Omega_i$. Our claim  (\ref{equation2.3}) now states that
$$\|T_{\Omega,\,a}f-T_{l,\,a}f\|_{L^2(\mathbb{R}^d)}\lesssim\sum_{i=0}^{\infty}
\|W_{l,\,a}^if\|_{L^2(\mathbb{R}^d)}\lesssim (\|\Omega\|_{L\log L(S^{d-1})}+1)\|f\|_{L^2(\mathbb{R}^d)}.$$
$\Omega\in L\log L(S^{d-1})$,  along with the vanishing condition (\ref{equation1.1}), guarantees
that $T_{\Omega,a}$ is bounded on $L^2(\mathbb{R}^d)$.
Therefore, $T_{l,a}$ is bounded on $L^2(\mathbb{R}^d)$ with bound independent of $l$.
This,
via Lemma \ref{lem2.grafa}, gives us desired conclusion.
\end{proof}

\begin{lemma}\label{thmchenhutao}
Let $A\in (0,\,1)$ be a constant, $\{\mu_j\}_{j\in\mathbb{Z}}$ be a sequence of functions on
$\mathbb{R}^d\backslash \{0\}$. Suppose that for all $j\in\mathbb{Z}$,
$\|\mu_j\|_{L^1(\mathbb{R}^d)}\lesssim 2^{-j}$, and for some $\beta\in (0,\,\infty)$,
\begin{eqnarray}\label{orginfourier}|\widehat{\mu_j}(\xi)|\lesssim 2^{-j}A\min\{|2^j\xi|^{2},\,|2^j\xi|^{-\beta}\},\end{eqnarray}
and for all multi-indices $\gamma\in\mathbb{Z}_+^d$,
$$\|\partial^{\gamma}\widehat{\mu_j}\|_{L^{\infty}(\mathbb{R}^d)}\lesssim 2^{j(|\gamma|-1)}.$$
Let $K(x)=\sum_{j\in\mathbb{Z}}\mu_j(x)$ and $T$ be the convolution operator with kernel $K$. Then for any $\tau\in (0,\,1/2)$,  function $a$ with $\nabla a\in L^{\infty}(\mathbb{R}^d)$,
$$\|[a,\,T]f\|_{L^2(\mathbb{R}^d)}\lesssim A^{\tau}\|f\|_{L^2(\mathbb{R}^d)}.
$$
\end{lemma}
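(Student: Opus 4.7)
The plan is to mimic the Littlewood–Paley reduction used in the proof of Theorem \ref{dingli2.9}, applied to the generic sequence $\{\mu_j\}$. Let $\phi$ and $S_l$ be as in Lemma \ref{lem0.1}, so that $\sum_l \phi^3(2^{-l}\xi) = 1$ for $\xi\neq 0$. Introduce the auxiliary multiplier operator $V_j^m$ defined by
\[
\widehat{V_j^m h}(\xi) = \widehat{\mu_j}(\xi)\,\phi(2^{j-m}\xi)\,\widehat{h}(\xi).
\]
Setting $l = m-j$ in the partition of unity factors the convolution operator $T_j$ with kernel $\mu_j$ as $T_j = \sum_m S_{m-j} V_j^m S_{m-j}$, so
\[
[a,\,T]f = \sum_{m\in\mathbb{Z}}\sum_{j\in\mathbb{Z}} [a,\,S_{m-j} V_j^m S_{m-j}]f,
\]
and Leibniz decomposes each inner commutator into the three pieces
\[
[a,\,S_{m-j}]\,V_j^m S_{m-j} + S_{m-j}[a,\,V_j^m] S_{m-j} + S_{m-j} V_j^m\,[a,\,S_{m-j}].
\]

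Next I would record the Fourier-side bounds on $V_j^m$. Since $\phi(2^{j-m}\cdot)$ is supported where $|2^j\xi|\sim 2^m$, the hypothesis (\ref{orginfourier}) gives $\|\widehat{V_j^m}\|_\infty \lesssim 2^{-j}A\min\{2^{2m},2^{-\beta m}\}$, while $\|\mu_j\|_{L^1}\lesssim 2^{-j}$ yields the unweighted bound $\|\widehat{V_j^m}\|_\infty \lesssim 2^{-j}$; Leibniz differentiation gives $\|\partial^\gamma \widehat{V_j^m}\|_\infty \lesssim 2^{j(|\gamma|-1)}\max\{1,2^{-|\gamma|m}\}$. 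For $m\le 0$ these match the hypotheses of Lemma \ref{lem0.3} with $D=2^j$, $B = 2^m$, and the role of its $A$-parameter played by $A^{1/2}$, yielding $\|[a, V_j^m]g\|_2 \lesssim A^{\sigma/2} 2^{m\sigma}\|g\|_2$ for any $\sigma\in(0,1)$. For $m\ge 1$, Lemma \ref{lem0.2} applied with $D = 2^j$ and $E = A\cdot 2^{-\beta m}$ gives $\|[a, V_j^m]g\|_2 \lesssim A^\varepsilon 2^{-\varepsilon\beta m}\|g\|_2$.

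To assemble the three pieces, for the outer ones I would invoke inequality (\ref{equation02}) of Lemma \ref{lem0.1} after writing $[a,S_{m-j}] = 2^{m-j}\cdot 2^{j-m}[a,S_{m-j}]$, combining the $L^\infty$ estimate for $\widehat{V_j^m}$ with Littlewood–Paley square-function control of $\{S_{m-j}f\}_j$; this produces the strong bound $A\cdot 2^m\|f\|_2$ for $m\le 0$ and $A\cdot 2^{-(1+\beta)m}\|f\|_2$ for $m\ge 1$. For the middle piece the Fourier supports of $\{S_{m-j}[a,V_j^m]S_{m-j}f\}_j$ are almost disjoint, so Plancherel reduces its $L^2$ norm to a square sum which, combined with the bounds just recorded, gives $A^{\sigma/2}2^{m\sigma}\|f\|_2$ for $m\le 0$ and $A^\varepsilon 2^{-\varepsilon\beta m}\|f\|_2$ for $m\ge 1$. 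Summing over $m\in\mathbb{Z}$ yields $\|[a,T]f\|_2 \lesssim (A^{\sigma/2}+A^\varepsilon)\|f\|_2$, and taking $\sigma,\varepsilon$ arbitrarily close to $1$ delivers $A^\tau\|f\|_2$ for every $\tau<1/2$.

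The main obstacle is the bookkeeping in the low-frequency regime $m\le 0$: the quadratic Fourier decay $|2^j\xi|^2$ in (\ref{orginfourier}) must be split as $(A^{1/2}\cdot 2^m)^2$ to feed both the $A$- and $B$-parameters of Lemma \ref{lem0.3}, and this half-factor loss is precisely what confines the final exponent to $\tau<1/2$.
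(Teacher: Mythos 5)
Your proposal is correct, and it is worth noting that the paper itself does not prove this lemma at all: it simply declares it ``a variant of Theorem 3.4 in \cite{chenhutao}'' and omits the details. What you supply is a self-contained argument built from the tools already present in Section 2, and it follows exactly the template the authors use to prove the claim (\ref{equation2.3}) inside Theorem \ref{dingli2.9}: factor each dyadic piece as $S_{m-j}V_j^m S_{m-j}$ via the partition $\sum_l\phi^3(2^{-l}\xi)=1$, split the commutator by Leibniz, hit the middle term with Lemma \ref{lem0.3} (low frequencies, $D=2^j$, $B=2^m$, with the hypothesis (\ref{orginfourier}) feeding $(A^{1/2}2^m)^2$ into the $(AB)^2$ slot) and with Lemma \ref{lem0.2} (high frequencies, $E=A2^{-\beta m}$), and control the outer terms by the square-function bounds of Lemma \ref{lem0.1} plus the $L^\infty$ bound on $\widehat{V_j^m}$; summing the geometric series in $m$ gives $A^{\sigma/2}+A^{\varepsilon}\lesssim A^{\tau}$ for any $\tau<1/2$, and your closing remark correctly identifies the $A^{1/2}$ splitting as the source of the $\tau<1/2$ restriction. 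Two small points of bookkeeping: for the outer piece $S_{m-j}V_j^m[a,S_{m-j}]f$ the relevant inequality is (\ref{equation01}) applied to the square function of $\{2^{m-j}[a,S_{m-j}]f\}_j$ (together with the almost-orthogonality of the outer $S_{m-j}$), not square-function control of $\{S_{m-j}f\}_j$, while (\ref{equation02}) handles the piece $[a,S_{m-j}]V_j^mS_{m-j}f$; and at $m=0$ the strict requirement $B<1$ in Lemma \ref{lem0.3} fails, but that single value of $m$ can be absorbed into the $m\geq 1$ regime via Lemma \ref{lem0.2}. Neither affects the validity of the argument.
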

Lemma \ref{thmchenhutao} is a variant of Theorem 3.4 in \cite{chenhutao}, we omit the details for brevity.

\begin{lemma}\label{yinli2.5}
Let $\Omega$ be homogeneous of degree zero (without the vanishing moment (\ref{equation1.1}))
and $\Omega\in L^1(S^{d-1})$,
$a$ be a function on $\mathbb{R}^d$ such that $\nabla a\in L^{\infty}(\mathbb{R}^d)$. Let $R_{l,a}^{**}$ be the
lacunary maximal operator defined by
$$R_{l,a}^{**}f(x)=\sup_{k\in\mathbb{Z}}\Big|\sum_{j>k}\int_{\mathbb{R}^d}
\big(K_j*\psi_{l+1-j}(x-y)-K_j*\psi_{l-j}(x-y)\big)(a(x)-a(y))f(y)dy\Big|.$$
Then
$$\|R^{**}_{l,a}f\|_{L^2(\mathbb{R}^d)}\lesssim 2^l\|\Omega\|_{L^1(S^{d-1})}\|f\|_{L^2(\mathbb{R}^d)}.
$$
\end{lemma}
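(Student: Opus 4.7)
The plan is two-fold: (i) dominate the lacunary maximal operator $R^{**}_{l,a}$ by the continuous maximal singular integral $T^*$ associated with $R_{l,a}$ plus an error controlled by the Hardy--Littlewood maximal function, and (ii) establish the $L^2$-boundedness of $T^*$ via Lemma \ref{lem2.grafa}, which in turn requires the $L^2$-boundedness of $R_{l,a}$ together with size and regularity bounds on its kernel.

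Set $\mu_j = K_j*(\psi_{l+1-j}-\psi_{l-j})$ and $M^l = \sum_j \mu_j$, so that $R_{l,a}f = [a, T_{M^l}]f$. For step (i), observe that since $\mu_j$ is supported at scale $2^j$, the dyadic sum $\sum_{j>k}\mu_j(z)$ equals $M^l(z)$ when $|z|>2^{k+2}$ and vanishes when $|z|<2^{k-1}$, leaving a boundary piece $e_k(z) = \sum_{j>k}\mu_j(z) - M^l(z)\chi_{|z|>2^{k+2}}$ supported in $|z|\sim 2^k$. Consequently
$$R^{**}_{l,a}f(x)\leq T^*f(x) + \sup_{k\in\mathbb{Z}}\Big|\int e_k(x-y)(a(x)-a(y))f(y)\,dy\Big|.$$
For the second term, combine $|a(x)-a(y)|\leq \|\nabla a\|_\infty 2^{k+2}$ on the support with the pointwise estimate $|e_k(z)|\lesssim |\Omega(z/|z|)|/|z|^{d+1}$ for $|z|\sim 2^k$ (which follows from $|\mu_j|\leq |K_j|*(|\psi_{l+1-j}|+|\psi_{l-j}|)$ and the size of $K_j$), then use a dyadic/angular decomposition of the resulting $|\Omega|$-weighted convolution to bound it pointwise by $\|\nabla a\|_\infty\|\Omega\|_{L^1}Mf(x)$, uniformly in $k$.

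For step (ii), verify the hypotheses of Lemma \ref{lem2.grafa}. The size estimate $A_1\lesssim \|\nabla a\|_\infty\|\Omega\|_{L^1}$ and the regularity estimate $A_2\lesssim 2^l\|\nabla a\|_\infty\|\Omega\|_{L^1}$ follow from direct computations using $\|\mu_j\|_{L^1}\lesssim 2^{-j}\|\Omega\|_{L^1}$ and $\|\nabla \mu_j\|_{L^1}\lesssim 2^{l-2j}\|\Omega\|_{L^1}$, the $2^l$ factor in the regularity arising from differentiating the smoothed kernel. For the $L^2$-boundedness of $R_{l,a}$ with bound $\lesssim 2^l\|\Omega\|_{L^1}\|\nabla a\|_\infty$, apply Lemma \ref{thmchenhutao} to the normalized sequence $\mu_j/\|\Omega\|_{L^1}$. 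The critical Fourier estimate $|\widehat{\mu_j}(\xi)|\lesssim 2^{-j}\|\Omega\|_{L^1}\min(|2^{j-l}\xi|^2, |2^{j-l}\xi|^{-N})$ comes from $|\widehat{K_j}(\xi)|\lesssim 2^{-j}\|\Omega\|_{L^1}$ together with the radiality of $\psi$ (which gives $\widehat\psi(0)=1$ and $\nabla\widehat\psi(0)=0$, hence the quadratic vanishing of $\widehat\psi(2^{j-l-1}\xi)-\widehat\psi(2^{j-l}\xi)$ at the origin); the derivative bounds $|\partial^\gamma\widehat{\mu_j}(\xi)|\lesssim 2^{j(|\gamma|-1)}\|\Omega\|_{L^1}$ follow by Leibniz. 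Lemma \ref{lem2.grafa} then delivers $\|T^*f\|_{L^2}\lesssim 2^l\|\nabla a\|_\infty\|\Omega\|_{L^1}\|f\|_{L^2}$, and combining with the $L^2$-boundedness of $M$ yields the claim.

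The main obstacle lies in the $L^2$-estimate of $R_{l,a}$ itself. The Fourier decay of $\widehat{\mu_j}$ is governed by the scale $2^{l-j}$ (from the $\psi_{l-j}$ factor) rather than $2^{-j}$, causing a scaling mismatch with the single-scale framework of Lemma \ref{thmchenhutao}: when rewritten in terms of $|2^j\xi|$, the implicit smallness parameter becomes $A\sim 2^{l\beta}$ rather than lying in $(0,1)$. One must therefore rely on a variant of Lemma \ref{thmchenhutao} that accommodates this larger parameter, or perform a direct Littlewood--Paley decomposition in the spirit of the proof of Theorem \ref{dingli2.9} using Lemmas \ref{lem0.1}--\ref{lem0.3}; in either case, the resulting bound is of the form $2^{l\beta\tau}\|\Omega\|_{L^1}$ with $\beta\tau<1$, safely absorbed by the allowed factor $2^l$.
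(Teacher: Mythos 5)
Your core argument is the same as the paper's: the Fourier estimates $|\widehat{K_j}(\xi)|\lesssim 2^{-j}\|\Omega\|_{L^1(S^{d-1})}$ combined with the second-order vanishing of $\widehat{\psi}(2^{j-l-1}\xi)-\widehat{\psi}(2^{j-l}\xi)$ at the origin and its rapid decay, the Leibniz bounds $|\partial^{\gamma}\widehat{\mu_j}(\xi)|\lesssim 2^{j(|\gamma|-1)}\|\Omega\|_{L^1(S^{d-1})}$, an application of (a suitably normalized) Lemma \ref{thmchenhutao} to get $\|R_{l,a}f\|_{L^2}\lesssim 2^{l}\|\Omega\|_{L^1(S^{d-1})}\|f\|_{L^2}$, and then the size/H\"ormander estimates for the kernel $\big(K^{l+1}-K^l\big)(x-y)(a(x)-a(y))$ fed into Lemma \ref{lem2.grafa}. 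Your observation about the scale mismatch (decay governed by $2^{j-l}$, so the smallness parameter becomes $\sim 2^{l}$ at scale $2^j$) is exactly the point the paper handles by factoring out $2^l\|\Omega\|_{L^1(S^{d-1})}$ before invoking Lemma \ref{thmchenhutao}, so this is the same route; your Hörmander constant $2^l\|\Omega\|_{L^1}$ is cruder than the paper's $l\|\Omega\|_{L^1}$ but still sufficient.

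The one step that is not correct as written is the treatment of the boundary term in your lacunary-to-truncated reduction (a step the paper leaves implicit). After convolving with $\psi_{l-j}$ the kernel is smeared, so the pointwise bound $|e_k(z)|\lesssim |\Omega(z/|z|)|/|z|^{d+1}$ fails, and, more seriously, the resulting averages cannot be dominated pointwise by $\|\Omega\|_{L^1(S^{d-1})}Mf(x)$: for $\Omega\in L^1$ concentrated near a direction the error term behaves like an average over a tube of eccentricity $2^{l}$, which exceeds $Mf(x)$ by a dimensional power of $2^l$. The repair is routine and stays within your framework: from $2^{j}|K_j^{}|\ast g\lesssim M_{\Omega}g$ and $|\psi_{l-j}|\ast|f|\lesssim Mf$ one gets the pointwise bound of the boundary term by $\|\nabla a\|_{\infty}\,M_{\Omega}(Mf)(x)$, and then the method of rotations gives $\|M_{\Omega}(Mf)\|_{L^2}\lesssim\|\Omega\|_{L^1(S^{d-1})}\|f\|_{L^2}$, which is all the lemma requires; with this substitution your proof goes through.
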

\begin{proof}
At first, we have that
$$|\widehat{\psi}(2^{j-l}\xi)|\lesssim |2^{j-l}\xi|^{-1},
$$
since $\widehat{\psi}\in \mathscr{S}(\mathbb{R}^d)$. This, together with  (\ref{eq2.fourier333}), leads to  that
\begin{eqnarray}\big|\widehat{\psi_{l-j}}(\xi)-\widehat{\psi_{l+1-j}}(\xi)\big|&=&
\big|\widehat{\psi}(2^{j-l-1})(\xi)-
\widehat{\psi}(2^{j-l})(\xi)\big|\\
&\lesssim&\min\{|2^{j-l-1}\xi|^{-1},\,|2^{j-l}\xi|^2\}\nonumber\\
&\lesssim &2^l\min\{|2^{j}\xi|^2,\,|2^j\xi|^{-1}\},\nonumber
\end{eqnarray}
Thus, by the first inequality in (\ref{eq2.fourier222}), we obtain that
$$|\widehat{K_j}(\xi)\widehat{\psi_{l+1-j}}(\xi)-\widehat{K_j}(\xi)\widehat{\psi_{l-j}}(\xi)|\lesssim 2^l\|\Omega\|_{L^1(S^{d-1})}
2^{-j}\min\{|2^{j}\xi|^2,\,|2^j\xi|^{-1}\}.
$$
On the other hand, we can verify that for all $j\in \mathbb{Z}$ and $\gamma\in\mathbb{Z}_+$,
\begin{eqnarray}\label{eqn2.four13}
|\partial^{\gamma}\big(\widehat{K_j}(\xi)\widehat{\psi}(2^{j-l}\xi)\big)|\lesssim
\|\Omega\|_{L^1(S^{d-1})}2^{j(|\gamma|-1)}
\end{eqnarray}
The last two Fourier transform estimates, via Lemma \ref{thmchenhutao}, gives us that
$$
\|R_{l,\,a}f\|_{L^2(\mathbb{R}^d)}\lesssim2^l\|\Omega\|_{L^1(S^{d-1})}\|f\|_{L^2(\mathbb{R}^d)}.
$$

Observe that the kernel of $R_{l,a}$   satisfies (\ref{equation2.size'}) and
(\ref{equation2.regular'}) with $A_1\lesssim C\|\Omega\|_{L^1(S^{d-1})}$ and $A_2\lesssim Cl\|\Omega\|_{L^1(S^{d-1})}$.
Another application of Lemma \ref{lem2.grafa} leads to our desired conclusion.
\end{proof}

The following approximation of $T_{\Omega,a}^{**}$ when $\Omega\in L^{\infty}(S^{d-1})$ was proved in
\cite{wangzhao}, and
will also useful in the proof of
Theorem \ref{dingli1.main}. Define the operator $[T_{\Omega,a}-T_{l,a}]^{**}$ by
\begin{equation*}
\begin{split}
[T_{\Omega,a}&-T_{l,a}]^{**}f(x)\\
&=\sup_{k\in\mathbb{Z}}\Big|\sum_{j>k}\int_{\mathbb{R}^d}[K_j(x-y)-K_j*
\psi_{l-j}(x-y)](a(x)-a(y))f(y)dy\Big|.
\end{split}
\end{equation*}
\begin{theorem}\label{dingli2.8}
Let $\Omega$ be homogeneous of degree zero (without the vanishing moment (\ref{equation1.1}))
and $\Omega\in L^{\infty}(S^{d-1})$. Then there exists a constant $\tau>0$ such that
$$
\|[T_{\Omega,a}-T_{l,a}]^{**}f\|_{L^2(\mathbb{R}^d)}\lesssim 2^{-\tau l}\|\Omega\|_{L^{\infty}(S^{d-1})}\|f\|_{L^2(\mathbb{R}^d)}.
$$
\end{theorem}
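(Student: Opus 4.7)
The proof will proceed in two stages that mirror the structure of the proof of Theorem \ref{dingli2.9}: first establish an $L^2$ bound for the non-maximal difference operator $T_{\Omega,a}-T_{l,a}$ with the exponential gain in $l$, then upgrade that bound to the lacunary maximal.

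For the non-maximal stage, set $\mu_j(x)=K_j(x)-K_j*\psi_{l-j}(x)$, so that $[T_{\Omega,a}-T_{l,a}]f=[a,T_\mu]f$ with $T_\mu$ the convolution operator with kernel $\sum_j\mu_j$. Since $\widehat{\mu_j}(\xi)=\widehat{K_j}(\xi)\bigl(1-\widehat{\psi}(2^{j-l}\xi)\bigr)$, combining the first estimate in $(\ref{eq2.fourier222})$ with $(\ref{eq2.fourier333})$ yields
\[
|\widehat{\mu_j}(\xi)|\lesssim 2^{-j}\|\Omega\|_{L^\infty(S^{d-1})}\min\bigl\{|2^j\xi|^{-\alpha},\,2^{-2l}|2^j\xi|^2,\,1\bigr\}.
\]
A geometric-mean interpolation between the first two terms inside the minimum produces
\[
|\widehat{\mu_j}(\xi)|\lesssim 2^{-j}\|\Omega\|_{L^\infty(S^{d-1})}\cdot 2^{-\epsilon l}\min\bigl\{|2^j\xi|^2,\,|2^j\xi|^{-\beta}\bigr\}
\]
for some $\epsilon,\beta\in(0,1)$ (explicitly $\beta=\alpha/2$, $\epsilon=\alpha/(\alpha+2)$), and the derivative bound $\|\partial^\gamma\widehat{\mu_j}\|_{L^\infty}\lesssim 2^{j(|\gamma|-1)}$ follows from direct differentiation of $\widehat{K_j}$ and $\widehat{\psi}(2^{j-l}\cdot)$. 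After renormalizing $\mu_j\mapsto\mu_j/\|\Omega\|_{L^\infty(S^{d-1})}$ so that $\|\widetilde{\mu_j}\|_{L^1}\lesssim 2^{-j}$, Lemma \ref{thmchenhutao} applied with $A=2^{-\epsilon l}$ delivers
\[
\|[T_{\Omega,a}-T_{l,a}]f\|_{L^2(\mathbb{R}^d)}\lesssim 2^{-\tau_0 l}\|\Omega\|_{L^\infty(S^{d-1})}\|f\|_{L^2(\mathbb{R}^d)}
\]
for some $\tau_0>0$.

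To upgrade to the lacunary maximal, I would verify Calder\'on--Zygmund size $(\ref{equation2.size'})$ and regularity $(\ref{equation2.regular'})$ for the kernel $\bigl(\sum_j\mu_j(x-y)\bigr)(a(x)-a(y))$; the pointwise bounds on $K_j$ and on its convolution with $\psi_{l-j}$, together with $|a(x)-a(y)|\leq\|\nabla a\|_\infty|x-y|$, give some $A_1,A_2$ of at most polynomial size in $l$. The discrete lacunary maximum $\sup_k|\sum_{j>k}\cdots|$ is then dominated, by the usual argument relating dyadic truncations to continuous truncations, by the maximal singular integral of Lemma \ref{lem2.grafa} plus a Hardy--Littlewood maximal term, so it inherits an $L^2$ bound.

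The principal obstacle is that the Calder\'on--Zygmund constants $A_1,A_2$ of the difference kernel do not themselves decay in $l$: only the operator norm $B=2^{-\tau_0 l}\|\Omega\|_{L^\infty(S^{d-1})}$ does. A bare invocation of Lemma \ref{lem2.grafa} would therefore yield only an $O(1)$ bound for the lacunary maximal, losing all the exponential decay. To recover the factor $2^{-\tau l}$ at the maximal level one has to redo the almost-orthogonality analysis from the non-maximal step with the $\sup_k$ built in, namely decompose $f=\sum_m P_m f$ by frequency shells and estimate $\sup_k\bigl|\sum_{j>k}\mu_j* P_m f\bigr|$ using a Fefferman--Stein vector-valued maximal inequality, exploiting that $\widehat{\mu_j}\,\widehat{P_m}$ is essentially supported where $j\approx -m$ and carries the factor $2^{-\epsilon l}$ on each piece. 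Summing over $m$ transmits the $2^{-\tau_0 l}$ gain from the non-maximal estimate to the lacunary maximal, at the cost of a possibly smaller exponent $\tau\in(0,\tau_0)$; this is the technical content of the argument in \cite{wangzhao}.
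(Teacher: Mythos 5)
You should first be aware that the paper does not prove Theorem \ref{dingli2.8} at all: it is quoted from the reference \cite{wangzhao}, so there is no internal proof to measure your argument against. Judged on its own, the first stage of your proposal is fine and is in the same spirit as the proof of Lemma \ref{yinli2.5}: writing $\widehat{\mu_j}(\xi)=\widehat{K_j}(\xi)\bigl(1-\widehat{\psi}(2^{j-l}\xi)\bigr)$, combining (\ref{eq2.fourier222}) and (\ref{eq2.fourier333}), and interpolating does give $|\widehat{\mu_j}(\xi)|\lesssim 2^{-j}\|\Omega\|_{L^{\infty}(S^{d-1})}2^{-\epsilon l}\min\{|2^j\xi|^2,|2^j\xi|^{-\beta}\}$ together with the derivative bounds, so Lemma \ref{thmchenhutao} yields the non-maximal estimate $\|[T_{\Omega,a}-T_{l,a}]f\|_{L^2}\lesssim 2^{-\tau_0 l}\|\Omega\|_{L^{\infty}(S^{d-1})}\|f\|_{L^2}$.

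The genuine gap is the second stage, and you in fact name it yourself. The kernels $\mu_j=K_j-K_j*\psi_{l-j}$ satisfy only $\|\mu_j\|_{L^1}\sim 2^{-j}$ with no gain in $l$, so the Calder\'on--Zygmund constants $A_1,A_2$ do not decay and Lemma \ref{lem2.grafa} (or a standard Cotlar inequality, whose error term is controlled by kernel constants times $Mf$) cannot transmit the factor $2^{-\tau l}$ to the lacunary maximal operator. Your proposed remedy is only a plan, and as written it abandons the commutator structure: the quantity to control is $\sup_k\bigl|\sum_{j>k}\int\mu_j(x-y)(a(x)-a(y))f(y)\,dy\bigr|$, not $\sup_k\bigl|\sum_{j>k}\mu_j*P_mf\bigr|$, and frequency-shell almost-orthogonality is not inherited after multiplication by $a(x)-a(y)$ --- this is exactly why the paper needs the commutator-adapted Lemmas \ref{lem0.1}--\ref{lem0.3} in the proof of Theorem \ref{dingli2.9}, and why the interplay of the supremum with the commutator is handled there by a Cotlar-type argument of the kind carried out in Lemma \ref{lem4.2} rather than by a bare Fefferman--Stein inequality. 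Deferring this step to \cite{wangzhao} mirrors what the paper itself does, but it means the decisive part of the statement --- retaining the $2^{-\tau l}$ decay after taking the lacunary supremum for a non-convolution operator --- is not actually proved in your proposal.
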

\section{Proof of Theorem \ref{dingli1.main}}

Let $M_{\Omega}$ be the maximal operator defined by
$$M_{\Omega}f(x)=\sup_{r>0}r^{-d}\int_{|x-y|<r}|\Omega(x-y)f(y)|dy.$$
It is well known that $\Omega\in L\log L(S^{d-1})$ guarantees the weak type $(1,\,1)$
boundedness  of $M_{\Omega}$, see \cite{chr2}.
Note that
$$T_{\Omega,\,a}^*f(x)\lesssim M_{\Omega}f(x)+T_{\Omega,\,a}^{**}f(x).$$
Thus, it suffices to prove that for each $\lambda>0$,
\begin{eqnarray}\label{budengshi3.1}\big|\{x\in\mathbb{R}^d:\, T_{\Omega,\,a}^{**}f(x)>\lambda\}\big|
\lesssim \int_{\mathbb{R}^d}\Phi_2\Big(\frac{|f(x)|}{\lambda}\Big) dx.
\end{eqnarray}
By homogeneity, we need only to prove  (\ref{budengshi3.1}) for the case of $\lambda=1$.
For a bounded function $f$, we apply the Calder\'on-Zygmund decomposition to $f$ at level $1$, and
obtain   a collection
of non-overlapping closed dyadic cubes $\mathcal{S}=\{Q\}$, such that $\|f\|_{L^{\infty}(\mathbb{R}^d\backslash \cup_{Q\in\mathcal{S}}Q)}\lesssim 1$,  and
$$\int_{Q} |f(x)| dx \lesssim |Q|,\,\,\,\sum_{Q\in\mathcal{S}}|Q|\lesssim \int_{\mathbb{R}^d} |f(x)|dx.$$
Let  $E=\cup_{Q\in\mathcal{S}}2^{100}dQ$, it is obvious that $|E|\lesssim \int_{\mathbb{R}^d} |f(x)|dx$. We then decompose $f$ as
$f=g+b$, where $$g(x)=f(x)\chi_{\mathbb{R}^d\backslash \cup_{Q\in\mathcal{S}}Q}(x)+f(x)\sum_{Q\in\mathcal{S}}\chi_{Q(|f|\leq 2^{c_1})}(x),
$$
and
$$b(x)=\sum_{Q\in\mathcal{S}}b_Q(x),\,\,b_{Q}(x)=\sum_{l=1}^{\infty}(b_{Q,1}^l(x)+b_{Q,2}^l(x)),$$
with $$b_{Q,\,1}^l(x)=\frac{1}{|Q|}\int_{Q(2^{c_12^{l-1}}<|f|\leq 2^{c_12^l})}f(y)dy\chi_{Q}(x),$$
and
$$b_{Q,2}^l(x)=f(x)\chi_{Q(2^{c_12^{l-1}}<|f|\leq 2^{c_12^l})}(x)-b_{Q,1}^l(x),$$
$c_1>0$ is a constant which will be chosen later.
For each fixed $l\in\mathbb{N}$, set $$G_1^l(x)=\sum_{Q\in\mathcal{S}}b_{Q,1}^l(x),\,\,G_2^l(x)=\sum_{Q\in\mathcal{S}}b_{Q,2}^l(x),\,G^l(x)=G_1^l(x)
+G_2^l(x).$$
For a fixed $j\in\mathbb{Z}$, $\mathcal{S}_j=\{Q:\, Q\in\mathcal{S},\,\ell(Q)=2^j\}$. Let $B_{j}(x)=\sum_{Q\in \mathcal{S}_j}b_Q(x)$ and $B_{j,\,1}^l(x)=\sum_{Q\in \mathcal{S}_j}b_{Q,1}^l(x)$,
$B_{j,2}^l(x)=\sum_{Q\in\mathcal{S}_j}b_{Q,\,2}^l(x)$ for $l\in\mathbb{N}$ and $B_{j}^l(x)=B_{j,1}^l(x)+B_{j,2}^l(x).$
Obviously, $G^l(x)=\sum_{j}B_j^l(x)$, and
\begin{itemize}
\item [\rm (i)] for all cube $Q\in\mathcal{S}$ and $l\in\mathbb{N}$, $\int b_{Q,2}^l(x)dx=0;$
\item[\rm (ii)] for each fixed $l$, $$\|G^l\|_{L^2(\mathbb{R}^d)}^2\lesssim 2^{c_12^l}\|f\|_{L^1(\mathbb{R}^d)};$$
\item [\rm (iii)]\begin{eqnarray}\label{equation3.2}&&\big\|G_1^l\big\|_{L^{\infty}(\mathbb{R}^d)}\lesssim 1,\,\,
\hbox{and}\,\,\big\|\sum_{l=1}^{\infty}G_1^l\big\|_{L^{2}(\mathbb{R}^d)}\leq
\big\|\sum_{l=1}^{\infty}|G_1^l|\big\|_{L^{2}(\mathbb{R}^d)}
\lesssim \|f\|_{L^1(\mathbb{R}^d)}^{\frac{1}{2}}.
\end{eqnarray}
\end{itemize}

By the $L^2(\mathbb{R}^d)$ boundedness of $T_{\Omega,\,a}^{**}$ (ref ???),   we have that
$$\big|\{x\in\mathbb{R}^d:\, |T_{\Omega,\,a}^{**}g(x)|>1/2\}\big|\lesssim \int_{\mathbb{R}^d}|f(x)| dx.$$
Our proof of (\ref{budengshi3.1}) is now reduced to prove that
\begin{eqnarray}\big|\{x\in\mathbb{R}^d\backslash E:\, |T_{\Omega,\,a}^{**}b(x)|>1/2\}\big|\lesssim
\int_{\mathbb{R}^d}\Phi_2(|f(x)|) dx.\end{eqnarray}
To show this estimate, let $E_i$ and $\Omega_i$ ($i\in \mathbb{N}\cup\{0\}$) be the same as in Section 2.
Set $K_j^i(x)=\frac{\Omega_i(x')}{|x|^{d+1}}\eta_j(x)$ and
\begin{eqnarray}\label{eq3.tomega}T_{\Omega,a;\,j}^ih(x)=\int_{\mathbb{R}^d}
K_j^i(x-y)(a(x)-a(y))h(y)dy.\end{eqnarray}
For each fixed $l\in\mathbb{N}$ and $i\in\mathbb{N}\cup\{0\}$, we define $U_{l,a,j}^{i}$  and $U_{l,a,j}$  by
\begin{eqnarray*}
U_{l,a,j}^{i}f(x)=\int_{\mathbb{R}^d}K_j^i*\psi_{2^l-j}(x-y)\big(a(x)-a(y)\big)f(y)dy,
\end{eqnarray*}
and
\begin{eqnarray*}
U_{l,a,j}f(x)=\int_{\mathbb{R}^d}K_j*\psi_{2^l-j}(x-y)\big(a(x)-a(y)\big)f(y)dy
\end{eqnarray*}
respectively. Obviously,
$$U_{l,a,j}f(x)=\sum_{i=0}^{\infty}U_{l,a,j}^if(x).$$

For each fixed $i\in\mathbb{N}$, set $i^*=\lfloor\log(\frac{2}{\tau} (i+2))\rfloor+1$, with $\tau$
the constant appeared in Theorem \ref{dingli2.8}. For $x\in \mathbb{R}^d\backslash E$, let
$${\rm D}_{1}b(x)=\sum_{i=0}^{\infty}
\sum_{l=1}^{\infty}\sup_{k}\Big|\sum_{j>k}(T^i_{\Omega,a,j}-U_{l+i^*,a,j}^i)
G^l(x)
\Big|,$$
$${\rm D}_{2}b(x)=\sum_{i=0}^{\infty}\sup_{k}\Big|\sum_{j>k}\sum_{l=1}^{\infty}(U_{l+i^*,a,j}^i-
U_{i^*,a,j}^i)G_{1}^l(x)\Big|,$$
$${\rm D}_{3}b(x)=\sum_{i=0}^{\infty}\sup_{k}\Big|\sum_{j>k}\sum_{l=1}^{\infty}(U_{i^*,a,j}^i-
U_{0,a,j}^i)G_{1}^l(x)\Big|,$$
$${\rm D}_{4}b(x)=\sup_{k}\Big|\sum_{i=0}^{\infty}\sum_{j>k}U_{0,a,j}^i\big(\sum_{l=1}^{\infty}G^l\big)(x)\Big|=
\sup_{k}\Big|\sum_{j>k}U_{0,a,j}\big(\sum_{l=1}^{\infty}G^l\big)(x)\Big|,$$
$${\rm D}_{5}b(x)=\sum_{i=0}^{\infty}\sum_{l=1}^{\infty}\sup_{k}\Big|\sum_{j>k}(U_{l+i^*,a,j}^i-U_{0,a,j}^i)G_2^l(x)\Big|.$$

It then follows that
$$T_{\Omega,\,a}^{**}b(x)\leq \sum_{m=1}^5D_mb(x).
$$

Consider $D_1b$ firstly. Note that $2^{-\tau2^{l+i^*}}\le 2^{-\tau 2^l}2^{-\tau 2^{i^*}}\lesssim 2^{-\tau 2^l}2^{-2i}$.
An application of Theorem \ref{dingli2.8} yields
\begin{eqnarray*}
\big\|{\rm D}_{1}b\big\|_{L^2(\mathbb{R}^d)}&\lesssim& \sum_{i=0}^{\infty}\sum_{l=1}^{\infty}
2^{-\tau 2^{l+i^*}}\|\Omega_i\|_{L^{\infty}(S^{d-1})}\|G^l\|_{L^2(\mathbb{R}^d)}\\
&\lesssim&\sum_{i=0}^{\infty}\|\Omega_i\|_{L^{\infty}(S^{d-1})}\sum_{l=1}^{\infty}2^{-\tau 2^{l+i^*}}(2^{c_12^l}\|f\|_{L^1(\mathbb{R}^d)})^{1/2}\\
&\lesssim&\|f\|_{L^1(\mathbb{R}^d)}^{\frac{1}{2}},
\end{eqnarray*}
provided that $0\leq {c_1}<\tau$.

Next we consider $D_2b$. Note that
$$|U_{m,a,j}^ih(x)-U_{m-1,a,j}^ih(x)|\leq |U_{m,a,j}^ih(x)-T_{\Omega,a,j}^ih(x)|+|U_{m-1,a,j}^ih(x)
-T_{\Omega,a,j}^ih(x)|.
$$
Theorem \ref{dingli2.8}, along with (\ref{equation3.2}), gives us that
\begin{eqnarray*}
\big\|{\rm D}_{2}b\big\|_{L^2(\mathbb{R}^d)}
&\leq &\sum_{i=0}^{\infty}\big\|\sup_{k\in\mathbb{Z}}\big|\sum_{j>k}\sum_{l=1}^{\infty}\sum_{m=i^*+1}^{l+i^*}(U_{m,a,j}^i-U_{m-1,a,j}^i)
G_{1}^l\big|\big\|_{L^2(\mathbb{R}^d)}\\
&=&\sum_{i=0}^{\infty}\big\|\sup_{k\in\mathbb{Z}}\big|\sum_{j>k}\sum_{m=i^*+1}^{\infty}\sum_{l=m-i^*}^{\infty}(U_{m,a,j}^i-U_{m-1,a,j}^i)
G_{1}^l\big|\big\|_{L^2(\mathbb{R}^d)}\\
&\leq &\sum_{i=0}^{\infty}\sum_{m=i^*+1}^{\infty}\big\|\sup_{k\in\mathbb{Z}}\big|\sum_{j>k}
(U_{m,a,j}^i-U_{m-1,a,j}^i)
\big(\sum_{l=m-i^*}^{\infty}G_{1}^l\big)\big|\big\|_{L^2(\mathbb{R}^d)}\\
&\lesssim&\sum_{i=0}^{\infty}\|\Omega_i\|_{L^{\infty}(S^{d-1})}\sum_{m=i^*+1}^{\infty}
2^{-\tau 2^m} \big\|\sum_{l=1}^{\infty}|G_{1}^l|\big\|_{L^2(\mathbb{R}^d)}\lesssim
\|f\|_{L^1(\mathbb{R}^d)}^{\frac{1}{2}}.
\end{eqnarray*}

Below we consider $D_3b$. By applying  Lemma \ref{yinli2.5}, we obtain that
\begin{eqnarray*}
\big\|{\rm D}_{3}b\big\|_{L^2(\mathbb{R}^d)}
&\leq &\sum_{i=0}^{\infty}\sum_{m=1}^{i^*}\big\|\sup_{k\in\mathbb{Z}}\big|\sum_{j>k}
(U_{m,a,j}^i-U_{m-1,a,j}^i)\big(\sum_{l=1}^{\infty}
G_{1}^l\big)\big|\big\|_{L^2(\mathbb{R}^d)}\\
&\lesssim&\sum_{i=0}^{\infty}(\|\Omega_i\|_{L^1(S^{d-1})})\sum_{m=1}^{i^*}2^m
\big\|\sum_{l=1}^{\infty}G_{1}^l\big\|_{L^2(\mathbb{R}^d)}\\
&\lesssim &\|f\|_{L^1(\mathbb{R}^d)}^{\frac{1}{2}},
\end{eqnarray*}
since $$\sum_{i=0}^{\infty}\|\Omega_i\|_{L^1(S^{d-1})}2^{i^*}\lesssim
\sum_{i=0}^{\infty}\|\Omega_i\|_{L^1(S^{d-1})}(i+2)\lesssim \|\Omega\|_{L\log L(S^{d-1})}.$$

As for ${\rm D}_4b$, we deduce from Theorem \ref{dingli2.9} that
\begin{eqnarray*}
|\{x\in\mathbb{R}^d:\, |{\rm D}_4b(x)|>1/8\}|\lesssim \sum_{l}\|G^l\|_{L^1(\mathbb{R}^d)}\lesssim \|f\|_{L^1(\mathbb{R}^d)}.
\end{eqnarray*}

It remains to estimate ${\rm D}_5b$.
Let
$${\rm D}_{51}b(x)=\sum_{i=0}^{\infty}\sum_{l=1}^{\infty}\sum_{3\leq s\leq N_1(l+i)}
\sup_{k\in\mathbb{Z}}\sum_{j>k}\Big|\sum_{Q\in\mathcal{S}_{j-s}}(U_{l+i^*,a,j}^i-U_{0,a,j}^i)b^l_{Q,2}(x)\Big|,
$$
$${\rm D}_{52}b(x)=\sum_{i=0}^{\infty}\sum_{l=1}^{\infty}\sup_{k\in\mathbb{Z}}\Big|\sum_{j>k}
\sum_{s> N_1(l+i)}(U_{l+i^*,a,j}^i-U_{0,a,j}^i)B_{j-s, 2}^l(x)\Big|,
$$
with $N_1\in\mathbb{N}$, $N_1>d+1$ a constant which will be chosen later. Obviously,
when $x\in\mathbb{R}^d\backslash E$, we have that
${\rm D}_5b(x)\leq {\rm D}_{51}b(x)+{\rm D}_{52}b(x).$
Observe that\begin{eqnarray*}
&&\sum_{l=1}^{\infty}\sum_{3\leq s\leq N_1(l+i)}\sum_j\sum_{Q\in\mathcal{S}_{j-s}}\|b_{Q,\,2}^l
\|_{L^1(\mathbb{R}^d)}\\
&&\quad\lesssim \sum_{l=1}^{\infty}(l+i)\sum_{Q\in\mathcal{S}}\|b_{Q,2}^l\|_{L^1(\mathbb{R}^d)}\\
&&\quad\lesssim i\|f\|_{L^1(\mathbb{R}^d)}+\int_{\mathbb{R}^d}|f(x)|\log\log ({\rm e}^2+|f(x)|)dx.
\end{eqnarray*}
Therefore,
\begin{eqnarray*}
&&\sum_{i=0}^{\infty}\sum_{l=1}^{\infty}\sum_{3\leq s\leq  N_1(l+ i)}\Big\|\sup_{k\in\mathbb{Z}}
\sum_{j>k}\Big|\sum_{Q\in\mathcal{S}_{j-s}}(U_{l+i^*,a,j}^i-U_{0,a,j}^i)b_{Q, 2}^l\Big|
\Big\|_{L^1(\mathbb{R}^d)}\\
&&\quad \le\sum_{i=0}^{\infty}\sum_{l=1}^{\infty}\sum_{3\leq s\leq  N_1(l+i)}
\sum_j\sum_{Q\in\mathcal{S}_{j-s}}\big(\|U_{l+i^*,a,j}^ib^l_{Q,2}\|_{L^1(\mathbb{R}^d)}
+\|U_{0,a,j}^ib^l_{Q,2}\|_{L^1(\mathbb{R}^d)}\big)\\
&&\quad\lesssim \sum_{i=0}^{\infty}\|\Omega_i\|_{L^1(S^{d-1})}\sum_{l=1}^{\infty}
\sum_{3\leq s\leq N_1(l+i)}\sum_j\sum_{Q\in\mathcal{S}_{j-s}}\|b_{Q,2}^l\|_{L^1(\mathbb{R}^d)}\\
&&\quad\lesssim \int_{\mathbb{R}^d}\Phi_2(|f(x)|)dx,
\end{eqnarray*}
This implies  that
$$\big|\{x\in\mathbb{R}^d:\, {\rm D}_{51}b(x)>\frac{1}{16}\}\big|\lesssim \int_{\mathbb{R}^d}\Phi_2(
|f(x)|)dx.$$

The proof of Theorem \ref{dingli1.main} is now reduced to prove that
\begin{eqnarray}\label{eq3.last}\big|\{x\in\mathbb{R}^d:\, {\rm D}_{52}b(x)>\frac{1}{16}\}
\big|\lesssim \int_{\mathbb{R}^d}|f(x)|dx.
\end{eqnarray}
The proof of this estimate is long and complicated, and will  be given in the next section.

\section{Proof of estimate (\ref{eq3.last}) }
For $i\in\mathbb{N}_+$, let $\Omega_i$, $U_{m,a,\,j}^i$ be the same as in Section 3. Set
$$\mathcal{H}_{m,a;\,j}^ih(x)=U_{m,a,j}^ih(x)-U_{m-1,a,j}^ih(x).$$

\subsection{An inequality of Cotlar type}
To prove (\ref{eq3.last}), we need an inequality of Cotlar type. We begin with a preliminary lemma.

\begin{lemma}\label{lem4.1}
Let $j_1,\,j_2,\,j_3\in\mathbb{Z}$. Then
\begin{itemize}
\item[\rm (i)]$$\|\psi_{j_1}*\psi_{j_2} -\psi_{j_1}\|_{L^1(\mathbb{R}^d)}\lesssim 2^{j_1-j_2};$$
\item[\rm (ii)] for $j_2<j_3$, $$\|\psi_{j_1}*(\psi_{j_3} -\psi_{j_2})\|_{L^{\infty}(\mathbb{R}^d)}\lesssim 2^{j_1d}2^{j_1-j_2}.$$
\end{itemize}
\end{lemma}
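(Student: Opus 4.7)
My plan is to exploit two pieces of structure shared by both estimates: the fact that $\psi$, hence each $\psi_t$, has integral one (so translating by $\int \psi_{j_2}(y)dy = 1$ turns a difference into a difference of translates), and the scaling identities $\|\nabla \psi_t\|_{L^1} \lesssim 2^t$ and $\|\nabla \psi_t\|_{L^\infty} \lesssim 2^{t(d+1)}$, together with the fact that $\psi_t$ is supported in $\{|y| \leq 2^{-t-2}\}$.

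For part (i), the strategy is to write
\[
\psi_{j_1} * \psi_{j_2}(x) - \psi_{j_1}(x) = \int \psi_{j_2}(y)\bigl[\psi_{j_1}(x-y)-\psi_{j_1}(x)\bigr]\,dy,
\]
using $\int \psi_{j_2} = 1$. Taking $L^1(dx)$ norm, swapping the order of integration and applying the standard $L^1$-continuity of translation,
\[
\int |\psi_{j_1}(x-y)-\psi_{j_1}(x)|\,dx \leq |y|\,\|\nabla \psi_{j_1}\|_{L^1(\mathbb{R}^d)} \lesssim |y|\,2^{j_1},
\]
and then using $\psi_{j_2} \geq 0$ with $\mathrm{supp}\,\psi_{j_2} \subset \{|y| \leq 2^{-j_2-2}\}$ to bound $\int \psi_{j_2}(y)|y|\,dy \lesssim 2^{-j_2}$. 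Multiplying yields the claimed $2^{j_1-j_2}$.

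For part (ii), the trick is that since $\int \psi_{j_2} = \int \psi_{j_3} = 1$, the signed measure $\psi_{j_3}-\psi_{j_2}$ has mean zero, so for any $x$,
\[
\psi_{j_1}*(\psi_{j_3}-\psi_{j_2})(x) = \int (\psi_{j_3}(y)-\psi_{j_2}(y))\bigl[\psi_{j_1}(x-y)-\psi_{j_1}(x)\bigr]\,dy.
\]
Using the pointwise Lipschitz bound $|\psi_{j_1}(x-y)-\psi_{j_1}(x)| \leq |y|\,\|\nabla \psi_{j_1}\|_{L^\infty} \lesssim |y|\,2^{j_1(d+1)}$ and the first-moment bound
\[
\int |y|\bigl(\psi_{j_3}(y)+\psi_{j_2}(y)\bigr)dy \lesssim 2^{-j_3}+2^{-j_2} \lesssim 2^{-j_2}
\]
(where the last step uses $j_2 < j_3$), we obtain
\[
\|\psi_{j_1}*(\psi_{j_3}-\psi_{j_2})\|_{L^\infty} \lesssim 2^{j_1(d+1)}\cdot 2^{-j_2} = 2^{j_1 d}\,2^{j_1-j_2},
\]
as required.

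Neither part presents a serious obstacle — the proof is essentially two applications of the same mean-value principle, differing only in whether $L^1$ or $L^\infty$ norms are used on $\nabla \psi_{j_1}$ and whether the mean-zero structure sits in $\psi_{j_2}$ alone (part (i)) or in the difference $\psi_{j_3}-\psi_{j_2}$ (part (ii)). The only thing to be careful about is using $j_2 < j_3$ in part (ii) to extract the single factor $2^{-j_2}$ (which is the larger of the two first-moment contributions), rather than the smaller $2^{-j_3}$; this is exactly why the hypothesis $j_2 < j_3$ appears in the statement.
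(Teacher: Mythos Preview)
Your proof is correct and matches the paper's approach: part (i) is identical, and for part (ii) the paper adds and subtracts $\psi_{j_1}(x)$ to split $\psi_{j_1}*(\psi_{j_3}-\psi_{j_2})$ into $(\psi_{j_1}*\psi_{j_3}-\psi_{j_1})-(\psi_{j_1}*\psi_{j_2}-\psi_{j_1})$ and bounds each piece by $2^{j_1d}2^{j_1-j_3}$ and $2^{j_1d}2^{j_1-j_2}$ respectively, which is the same mean-value argument you run directly on the difference.
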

\begin{proof}
Conclusion (i) was essentially given in \cite[Lemma 2]{hon}.
For the sake of self-contained, we present its proof here.
From the fact that $\int_{\mathbb{R}^d}\psi(y)dy=1$, it follows that
\begin{eqnarray*}
\psi_{j_1}*\psi_{j_2}(x)-\psi_{j_1}(x)=\int_{\mathbb{R}^d}\big(\psi_{j_1}(x-y)-\psi_{j_1}(x)\big)\psi_{j_2}(y)dy
\end{eqnarray*}
Therefore,
$$\|\psi_{j_1}*\psi_{j_2}-\psi_{j_1}\|_{L^1(\mathbb{R}^d)}\leq 2^{j_1}\int_{\mathbb{R}^d}|y||\psi_{j_2}(y)|dy\lesssim 2^{j_1-j_2}.
$$
This, in turn, implies (i).
For conclusion (ii), it is easy to verify that for $x\in\mathbb{R}^d$,
\begin{eqnarray*}|\psi_{j_1}*\psi_{j_3}(x)-\psi_{j_1}(x)|&\leq & \int_{\mathbb{R}^d}
|\psi_{j_1}(x-y)-\psi_{j_1}(x)||\psi_{j_3}(y)|dy\\
&\lesssim &2^{j_1d}\int_{\mathbb{R}^d}2^{j_1}|y||\psi_{j_3}(y)|dy\lesssim 2^{j_1d}2^{j_1-j_3}.
\end{eqnarray*}
Also, we have that for $x\in\mathbb{R}^d$,
\begin{eqnarray*}|\psi_{j_1}*\psi_{j_2}(x)-\psi_{j_1}(x)|\lesssim 2^{j_1d}2^{j_1-j_2}.
\end{eqnarray*}
Combining the estimates above leads to conclusion (ii).
\end{proof}
The following  Cotlar type inequality will be useful in the proof of (\ref{eq3.last}).
\begin{lemma}\label{lem4.2} Let  $\Omega$ be homogeneous of degree zero, and integrable on $S^{d-1}$,
$a$ be a   function in $\mathbb{R}^d$ such that $\nabla a\in L^{\infty}(\mathbb{R}^d)$. For $i\in\mathbb{N}\cup\{0\}$,  $s,\,l,\,m\in\mathbb{N}$ with $s>N_1(l+i)$ and $m\leq l+i^*$, the following Cotlar type  estimate holds
\begin{eqnarray*}
\sup_{k\in\mathbb{Z}}\Big|\sum_{j>k}\mathcal{H}^i_{m,a;j}B_{j-s, 2}^l(x)\Big|&\lesssim  &
\sum_{r=0}^{s2^{l+2}-1}M\Big(\sum_{j\equiv r({\rm mod}\,s2^{l+2})}\mathcal{H}^i_{m,a;\,j}B_{j-s, 2}^l
\Big)(x)\\
&&\qquad+ s2^{l+2}2^{-s}2^{i}\sum_{j\in\mathbb{Z}}
\sum_{Q\in\mathcal{S}_{j-s}}v_{j}*|b_{Q, 2}^l|(x)\\
&&\qquad+s2^{l+2}2^{-s}2^{i}Mb(x),
\end{eqnarray*}
where $v_j(x)=2^{-jd}\chi_{|x|\leq 2^j}(x)$.
\end{lemma}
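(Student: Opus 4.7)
The strategy is a lacunary Cotlar-type decomposition tied to residue classes modulo $N:=s2^{l+2}$. Fix $k\in\mathbb{Z}$, write $F_j:=\mathcal{H}^i_{m,a;j}B^l_{j-s,2}$, and split
\begin{equation*}
\sum_{j>k}F_j(x)=\sum_{r=0}^{N-1}\sum_{\substack{j>k\\ j\equiv r\,({\rm mod}\,N)}}F_j(x).
\end{equation*}
For each residue $r$, let $j_0=j_0(r)$ be the smallest $j>k$ in that class; then $k<j_0\le k+N$, and any index $j'<j_0$ within the class satisfies $j'\le j_0-N$. With the $L^1$-normalized averaging kernel $v_{j_0}$, I perform the Cotlar splitting
\begin{equation*}
\sum_{\substack{j>k\\ j\equiv r\,({\rm mod}\,N)}}F_j(x)=v_{j_0}*\Big(\sum_{j\equiv r\,({\rm mod}\,N)}F_j\Big)(x)+I_r(x)+J_r(x),
\end{equation*}
with $I_r(x)=\sum_{j\ge j_0,\,j\equiv r\,({\rm mod}\,N)}(F_j-v_{j_0}*F_j)(x)$ and $J_r(x)=-v_{j_0}*\big(\sum_{j<j_0,\,j\equiv r\,({\rm mod}\,N)}F_j\big)(x)$. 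The leading term is pointwise dominated by $M\big(\sum_{j\equiv r\,({\rm mod}\,N)}F_j\big)(x)$; summing in $r$ yields the first term on the right-hand side of the claim.

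For $I_r$, each index satisfies $j\ge j_0$, so the averaging radius $2^{j_0}$ is no larger than the kernel scale $2^j$ of $F_j$. I would control $|F_j-v_{j_0}*F_j|$ cube-by-cube: expand $F_j=\sum_{Q\in\mathcal{S}_{j-s}}\mathcal{H}^i_{m,a;j}b^l_{Q,2}$, use $\int b^l_{Q,2}=0$ to produce a first-order Taylor remainder against the $\psi$-smoothed, commutator-twisted kernel of $\mathcal{H}^i_{m,a;j}$ (Lemma \ref{lem4.1} quantifies the smoothing discrepancies involving $\psi_{2^m-j}-\psi_{2^{m-1}-j}$). Since $\ell(Q)/2^j=2^{-s}$ and $\|\Omega_i\|_{L^\infty(S^{d-1})}\le 2^i$, this yields a pointwise estimate of order $2^{-s}2^iv_j*|b^l_{Q,2}|(x)$ per cube; summing in $j$ within the class and then in $r\in\{0,\ldots,N-1\}$, absorbing a factor of $N=s2^{l+2}$, gives the second error term in the claim.

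For $J_r$, each index satisfies $j\le j_0-N$, so $2^{j_0}\gg 2^j$ and $v_{j_0}*F_j(x)$ is an average over a ball of radius $2^{j_0}$ of a function supported near cubes of side $2^{j-s}$. Reusing $\int b^l_{Q,2}=0$ and the $y$-smoothness of the kernel of $\mathcal{H}^i_{m,a;j}$ averaged in its first variable against $v_{j_0}$, a first-order Taylor estimate in $y$ extracts a factor proportional to $2^{j-s}/2^{j_0}$ per summand. The resulting $j$-sum over $j\le j_0-N$ converges geometrically (using $s>N_1(l+i)$) and the remaining spatial averages are dominated by the Hardy-Littlewood maximal function of $b$, producing the per-class bound $2^{-s}2^iMb(x)$; summing in $r$ yields the final error term $s2^{l+2}2^{-s}2^iMb(x)$. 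Taking the supremum in $k$ completes the proof. The main obstacle is precisely this handling of $J_r$: one must simultaneously combine the mean-zero cancellation of $b^l_{Q,2}$, the $y$-regularity of the commutator-twisted and $\psi$-smoothed kernel, and the coarseness of $v_{j_0}$ at scale $2^{j_0}\gg 2^j$ to extract a $j$-decay that is both summable and repackageable into $Mb(x)$, with the hypotheses $s>N_1(l+i)$ and $m\le l+i^*$ being precisely what makes this balance possible.
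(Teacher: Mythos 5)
Your overall architecture (split into residue classes mod $s2^{l+2}$, compare with a mollified version of the full class sum, treat the indices above and below the cutoff separately) matches the paper's, and your treatment of the main term and of $J_r$ is in the right spirit. But there is a genuine gap in $I_r$, and it sits exactly at the crux of the lemma: you take the comparison mollifier at scale $2^{j_0}$ (the smallest surviving kernel scale), whereas the kernel of $\mathcal{H}^i_{m,a;j}$ is $\widetilde{K}^i_{j,m}=K^i_j*(\psi_{2^m-j}-\psi_{2^{m-1}-j})$, which is smooth only at the much finer scale $2^{j-2^m}$ (its gradient is of size $\sim 2^i2^{-j(d+1)}2^{2^m-j}$); recall $\Omega_i$ is merely bounded, so there is no other regularity. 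Consequently, for $j$ near $j_0$ the difference $F_j-v_{j_0}*F_j$ gains nothing from smoothness, and the mean-zero of $b^l_{Q,2}$ gains only $\ell(Q)\cdot\mathrm{Lip}_y\sim 2^{j-s}\cdot 2^{2^m-j}=2^{2^m-s}$, not the $2^{-s}$ you assert ("$\ell(Q)/2^j=2^{-s}$" ignores that the relevant modulus of continuity is at scale $2^{j-2^m}$, not $2^j$). Since the lemma must hold for every $m\le l+i^*$, and $2^{2^m}$ can be of order $2^{c(i)2^l}$ while the allowed loss is only $s2^{l+2}$, your claimed per-cube bound $2^{-s}2^i v_j*|b^l_{Q,2}|$ for $I_r$ fails for large $m$, and no rearrangement of the same ingredients at mollifier scale $2^{j_0}$ repairs it.

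The paper's proof avoids this by choosing the comparison mollifier $\Psi_{t_q}$ at the intermediate scale $2^{-t_q}=2^{j_0-s(2^{l+1}+2)}$: strictly finer than the smoothness scales $2^{j-2^m}$ of all kernels with $j\ge j_0$ (this is where $m\le l+i^*$, $s>N_1(l+i)$, $s>2i$ enter, via $2^m\le 2^{l+i^*}\ll s2^{l+1}$), yet still much coarser than all scales $2^j$ with $j\le j_0-s2^{l+2}$ appearing in the lower part, so the analogue of your $J_r$ still works. Because $\mathcal{H}^i_{m,a;j}$ is not a convolution, the paper also needs the algebraic identity $\mathcal{H}^i_{m,a;j}=(\Psi_{2^m-j}-\Psi_{2^{m-1}-j})T^i_{\Omega,a,j}+[a,\Psi_{2^m-j}]T^i_{\Omega,j}-[a,\Psi_{2^{m-1}-j}]T^i_{\Omega,j}$, decomposing $(\mathrm{I}-\Psi_{t_q})\mathcal{H}^i_{m,a;j}$ into the five terms ${\rm D}_1,\dots,{\rm D}_5$ and estimating the smoothing discrepancies in $L^1$ via Lemma \ref{lem4.1}; your proposal glosses over this commutator bookkeeping as well. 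For the lower part the paper changes variables so that the shift $z\to z_Q$ produced by the cancellation of $b^l_{Q,2}$ never hits the rough kernel, only $\psi_{t_q}$ and $a$; your sketch of $J_r$ is compatible with that, but without the intermediate-scale choice and the ${\rm D}_1$--${\rm D}_5$ decomposition the argument for $I_r$ does not close.
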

\begin{proof}
We use some ideas in the proof of  Lemma 2.6 in \cite{bhmo}, but with some more refined
modifications, since $\mathcal{H}^i_{m,a;j}$ is not a convolution operator. Without loss of generality, we may
assume that $\|\nabla a\|_{L^{\infty}(\mathbb{R}^d)}=1.$ It is easy to see that
\begin{eqnarray*}
\sup_{k\in\mathbb{Z}}\Big|\sum_{j>k}\mathcal{H}_{m,a;j}^iB_{j-s,2}^l(x)\Big|\leq \sum_{r=0}^{s2^{l+2}-1}
\sup_{q\in\mathbb{Z}}\Big|\sum_{p\geq q}\sum_{Q\in\mathcal{S}_{ps2^{l+2}+r-s}}
\mathcal{H}_{m,a;ps2^{l+2}+r}^ib_{Q,2}^l(x)\Big|.
\end{eqnarray*}
For each fixed $q$, let $t_q=-qs2^{l+2}-r+s(2^{l+1}+2)$,
and $\Psi_{t_q}$ be the convolution operator with kernel $\psi_{t_q}$ (recall that $\psi_{t_q}(x)=2^{t_qd}\psi(2^{t_q}x)$). For $l\in\mathbb{N}$, we write
\begin{eqnarray*}
&&\sum_{p\geq q}\sum_{Q\in\mathcal{S}_{ps2^{l+2}+r-s}}\mathcal{H}_{m,a;ps2^{l+2}+r}^ib_{Q,2}^l
\\
&&\quad=\sum_{p\geq q}\sum_{Q\in\mathcal{S}_{ps2^{l+2}+r-s}}\mathcal{H}_{m,a;ps2^{l+2}+r}^ib_{Q,2}^l
-\Psi_{t_q}\mathcal{H}_{m,a;ps2^{l+2}+r}^ib_{Q,2}^l\\
&&\quad+\sum_{p\in\mathbb{Z}}\sum_{Q\in\mathcal{S}_{ps2^{l+2}+r-s}}
\Psi_{t_q}\mathcal{H}_{m,a;ps2^{l+2}+r}^ib_{Q,2}^l\\
&&\quad+\sum_{p< q}\sum_{Q\in\mathcal{S}_{ps2^{l+2}+r-s}}
\Psi_{t_q}\mathcal{H}_{m,a;ps2^{l+2}+r}^ib_{Q,2}^l\\
&&:={\rm I}_q(x)+{\rm II}_q(x)+{\rm III}_q(x).
\end{eqnarray*}
For each $q\in \mathbb{Z}$, it is obvious that
\begin{eqnarray}\label{equation4.1iiq}|{\rm II}_q(x)|\lesssim M\Big(\sum_{p\in\mathbb{Z}}\mathcal{H}_{m, a,ps2^{l+2}+r}^i
B_{ps2^{l+2}+r-s,2}^l\Big)(x).\end{eqnarray}

We now estimate the term $\sup_q|{\rm I}_q|$. Let $T_{\Omega,j}$ be the convolution operator with kernel $K_j$, and
$$T^i_{\Omega,a;\,j}h(x)=\int_{\mathbb{R}^d}K_j^i(x-y)(a(x)-a(y))h(y)dy.
$$Set
$$\widetilde{\psi}_{2^m-ps2^{l+2}-r}(y)=\psi_{2^m-ps2^{l+2}-r}(y)-\psi_{2^{m-1}-ps2^{l+2}-r}(y).$$
Let $\Psi_{t_q}$, $\widetilde{\Psi}_{2^m-ps2^{l+2}-r}$ be the convolution operators with kernels
$\psi_{t_q}$
and
$\widetilde{\psi}_{2^m-ps2^{l+2}-r}$
respectively. For each fixed $p\geq q$ and $Q\in\mathcal{S}$ with $\ell(Q)=2^{ps2^{l+2}+m-s}$, write
\begin{eqnarray*}
&&\mathcal{H}_{m,a;ps2^{l+2}+r}^ib_{Q,2}^l(x)-\Psi_{t_q}\mathcal{H}_{m,a;ps2^{l+2}+r}^ib_{Q,2}^l(x)\\
&&\quad=(\Psi_{2^m-ps2^{l+2}-r}-\Psi_{t_q}\Psi_{2^{m}-ps2^{l+2}-r})T^i_{\Omega,a,ps2^{l+2}+r}b_{Q,2}^l(x)\\
&&\qquad-\big(\Psi_{2^{m-1}-ps2^{l+2}-r}-\Psi_{t_q}\Psi_{2^{m-1}-ps2^{l+2}-r}\big)
T^i_{\Omega,a,\,ps2^{l+2}+r}b_{Q,2}^l(x)\\
&&\qquad+\big[a,\,\Psi_{2^m-ps2^{l+2}-r}-\Psi_{t_q}\Psi_{2^{m}-ps2^{l+2}-r}\big]
T^i_{\Omega,ps2^{l+2}+r}b_Q^l(x)\\
&&\qquad-\big[a,\Psi_{2^{m-1}-ps2^{l+2}-r}-\Psi_{t_q}\Psi_{2^{m-1}-
ps2^{l+2}-r}\big]T^i_{\Omega,ps2^{l+2}+r}b_{Q,2}^l(x)\\
&&\qquad+[a,\Psi_{t_q}]\big(\Psi_{2^{m}-ps2^{l+2}-r}-\Psi_{2^{m-1}-ps2^{l+2}-r}\big)
T^i_{\Omega,ps2^{l+2}+r}b_{Q,2}^l(x)\\
&&\quad:=\sum_{t=1}^5{\rm D}_t(p,q,Q)(x).
\end{eqnarray*}
Note that when $p\geq q$,
$${\rm supp}\big(\psi_{2^m-ps2^{l+2}-r}-\psi_{t_q}*\psi_{2^m-ps2^{l+2}-r}\big)
\subset\{x:\,|x|\leq 2^{ps2^{l+2}+r+1}\},
$$
it then follows from (i) of Lemma  \ref{lem4.1} that
\begin{eqnarray*}
&&\int_{\mathbb{R}^d}\big|K^i_{ps2^{l+2}+r}(x-y-z)\big(a(x)-a(y+z))\\
&&\quad\qquad\times \big(\psi_{2^m-ps2^{l+2}-r}(y)-
\psi_{t_q}*\psi_{2^m-ps2^{l+2}-r}(y)\big)\big|dy\\
&&\quad\lesssim  2^{-d(ps2^{l+2}+r)}\chi_{\{|x-z|\leq 2^{ps2^{l+2}+r+2}\}}(x-z)2^{-t_q+2^m-ps2^{l+2}-r}2^i,
\end{eqnarray*}
since $\|\Omega_i\|_{L^{\infty}(S^{d-1})}\leq 2^i$. The facts that $s>2i$ and $m\leq l+i^*$  implies that
$$-t_q+2^m-ps2^{l+2}-r\leq -s(2^{l+2}+2)+i2^l<-2s-s2^{l+1},$$
when $p\geq q$. Therefore,
\begin{eqnarray*}\sup_{q}\sum_{p\geq q}\sum_{Q\in \mathcal{S}_{ps2^{l+2}+r-s}}|{\rm D}_1(p,q,Q)(x)|\lesssim
2^{i}2^{-s}\sum_{j}\sum_{Q\in \mathcal{S}_{j-s}}v_{j}*(|b_{Q,2}^l|)(x).
\end{eqnarray*}
Similarly, we get that
\begin{eqnarray*}
\sup_{q}\sum_{p\geq q}\sum_{Q\in \mathcal{S}_{ps2^{l+2}+r-s}}|{\rm D}_2(p,q,Q)(x)|
\lesssim 2^{i}2^{-s} \sum_{j}\sum_{Q\in \mathcal{S}_{j-s}}v_{j}*(|b_{Q,2}^l|)(x).
\end{eqnarray*}
Also, we have by (i) of Lemma \ref{lem4.1} that
\begin{eqnarray*}
&&\int_{\mathbb{R}^d}\big|K^i_{ps2^{l+2}+r}(y-z)\big(a(x)-a(y))\\
&&\quad\qquad\times \big(\psi_{2^m-ps2^{l+2}-r}(x-y)-
\psi_{t_q}*\psi_{2^m-ps2^{l+2}-r}(x-y)\big)\big|dy\\
&&\quad\lesssim  2^{-d(ps2^{l+2}+r)}\chi_{\{|x-z|\leq 2^{ps2^{l+2}+r+2}\}}(x-z)2^{-t_q}2^i.
\end{eqnarray*}
Therefore,
\begin{eqnarray*}
&&\sup_{q}\sum_{p\geq q}\sum_{Q\in \mathcal{S}_{ps2^{l+2}+r-s}}
|{\rm D}_3(p,q,Q)(x)|
\lesssim 2^{i}2^{-s}\sum_{j}\sum_{Q\in \mathcal{S}_{j-s}}v_{j}*(|b_{Q,2}^l|)(x).
\end{eqnarray*}
and similarly,
\begin{eqnarray*}
&&\sup_{q}\sum_{p\geq q}\sum_{Q\in \mathcal{S}_{ps2^{l+2}+r-s}}|{\rm D}_4(p,q,Q)(x)|
\lesssim 2^{i}2^{-s}\sum_{j}\sum_{Q\in \mathcal{S}_{j-s}}v_{j}*(|b_{Q,2}^l|)(x).
\end{eqnarray*}
Observing that when $p\geq q$,
\begin{eqnarray*}
\int_{\mathbb{R}^d}\big|\psi_{t_q}(x-y)\widetilde{\psi}_{2^m-ps 2^{l+2}-r}(y-z)\big|dy\lesssim
2^{d(2^m-ps 2^{l+2}-r)}
\chi_{\{|z-x|\leq C2^{ps 2^{l+2}+r}\}}(z),
\end{eqnarray*}
we then have that
\begin{eqnarray*}
&&\Big|[a,\Psi_{t_q }]\big(\Psi_{2^{m}-ps2^{l+2}-r}-\Psi_{2^{m-1}-ps2^{l+2}-r}\big)
T^i_{\Omega,ps2^{l+2}+r}b_{Q,2}^l(x)\Big|\\
&&\quad=2^{-ps2^{l+2}-r-t_q}2^{d(2^m-ps 2^{l+2}-r)}\nonumber\\
&&\qquad\times\int_{|x-z|\leq C2^{ps 2^{l+2}+r}}2^{ps2^{l+2}+r}\big|
T^i_{\Omega,ps2^{l+2}+r}b_{Q,2}^l(z)\big|dz\nonumber\\
&&\quad\lesssim2^{-s2^{l+1}-2s}2^i v_{ps2^{l+2}+r}*(|b_{Q,2}^l|)(x).\nonumber
\end{eqnarray*}
Therefore,
\begin{eqnarray}\label{equation4.1iq}\sup_{q}|{\rm I}_q(x)|&\lesssim &2^{-s}2^{i}\sum_{j\in\mathbb{Z}}
\sum_{Q\in\mathcal{S}_{j-s}}
v_j*(|b_{Q,2}^l|) (x).
\end{eqnarray}

We turn our attention to term ${\rm III}_q$. Let
\begin{eqnarray*}L(x,\,z)&=&\int_{\mathbb{R}^d}\psi_{t_q}(x-y)K_{ps2^{l+2}+r}^i*\widetilde{\psi}_{2^m-ps2^{l+2}-r}(y-z)
\big(a(y)-a(z))dy\\
&=&\int_{\mathbb{R}^d}\psi_{t_q}(x-y-z)K_{ps2^{l+2}+r}^i*\widetilde{\psi}_{2^m-ps2^{l+2}-r}(y)
\big(a(y+z)-a(z))dy.
\end{eqnarray*}
For each cube $Q\in \mathcal{S}_{ps2^{l+2}+r-s}$, let $z_Q$ be the center of $Q$.
Using the cancelation of $b_{Q,\,2}^l$, we can write
\begin{eqnarray*}
\Psi_{t_q}\mathcal{H}_{m,a;ps2^{l+2}+r}^ib_{Q,2}^l(x)&=&\int_{\mathbb{R}^d}
L(x,\,z)b_{Q,\,2}^l(z)dz\\
&=&
\int_{\mathbb{R}^d}\big[L(x,\,z)-L(x,\,z_Q)\big]b_{Q,2}^l(z)dz.
\end{eqnarray*}
Now set
$${\rm F}_1(x,z,z_Q)=\int_{\mathbb{R}^d}\psi_{t_q}(x-y-z)K_{ps2^{l+2}+r}^i*\widetilde{\psi}_{2^m-ps2^{l+2}-r}(y)
\big(a(z_Q)-a(z))dy,
$$
$${\rm F}_2(x,z,z_Q)=\int_{\mathbb{R}^d}\psi_{t_q}(x-y-z)K_{ps2^{l+2}+r}^i*\widetilde{\psi}_{2^m-ps2^{l+2}-r}(y)
\big(a(y+z)-a(y+z_Q))dy,$$
and
\begin{eqnarray*}{\rm F}_3(x,z,z_Q)&=&\int_{\mathbb{R}^d}\big(\psi_{t_q}(x-y-z)-\psi_{t_q}(x-y-z_Q)\big)\\
&&\times
K_{ps2^{l+2}+r}^i*\widetilde{\psi}_{2^m-ps2^{l+2}-r}(y)
\big(a(y+z_Q)-a(z_Q))dy.
\end{eqnarray*}
We then  decompose  $L(x,z)-L(x,\,z_Q)$ as
\begin{eqnarray*}
L(x,z)-L(x,\,z_Q)={\rm F}_1(x,z,z_Q)+{\rm F}_2(x,z,z_Q)+{\rm F}_3(x,z,z_Q).
\end{eqnarray*}
An application of Young's inequality gives us that
\begin{eqnarray*}\|K_{ps2^{l+2}+r}^i*\widetilde{\psi}_{2^m-ps2^{l+2}-r}\|_{L^{\infty}(\mathbb{R}^d)}
&\leq& \|K_{ps2^{l+2}+r}^i\|_{L^{\infty}(\mathbb{R}^d)}
\|\widetilde{\psi}_{2^m-ps2^{l+2}-r}\|_{L^1(\mathbb{R}^d)}\\
&\lesssim &2^i2^{-(ps2^{l+2}+r)(d+1)}.
\end{eqnarray*}
Therefore,
\begin{eqnarray*}|{\rm F}_1(x,z,z_Q)|&\lesssim &2^i2^{t_qd-(ps2^{l+2}+r)d}2^{-s}
\int_{\{|y|\leq 2^{ps2^{l+2}+r+2}\}}dy\chi_{\{|x-z|\leq C2^{-t_q}\}}(z)\\
&\lesssim& 2^i2^{-s}2^{t_qd}\chi_{\{|x-z|\leq C2^{-t_q}\}}(z),
\end{eqnarray*}
since $t_q -ps2^{l+2}-r<0$.
Similarly, we have that
$$|{\rm F}_2(x,z,z_Q)|\lesssim 2^i2^{-s}2^{t_qd}\chi_{\{|x-z|\leq C2^{-t_q}\}}(z).
$$
As for term ${\rm F}_3$, a trivial computation involving the mean vale theorem leads to that
\begin{eqnarray*}
|{\rm F}_3(x,z,z_Q)|&\lesssim&2^{t_q(d+1)}|z-z_Q|
2^i2^{-(ps2^{l+2}+r)(d+1)}\\
&&\quad\times \int_{\{|y|\leq C2^{ps2^{l+2}+r}\}}|y|dy\chi_{\{|x-z|\leq C2^{-t_q}\}}(z)\\
&\lesssim &2^i2^{-s}2^{t_qd}\chi_{\{|x-z|\leq C2^{-t_q}\}}(z).
\end{eqnarray*}
Combining the estimates for term ${\rm F}_1$,  ${\rm F}_2$ and ${\rm F}_3$ yields
$$|{\rm III}_q(x)|\lesssim 2^i2^{-s}Mb(x).
$$
This, along with estimates (\ref{equation4.1iiq}) and (\ref{equation4.1iq})
then  completes the proof of Lemma \ref{lem4.2}.
\end{proof}

\subsection{Micro-local decomposition for $\mathcal{H}^i_{m,a;j}$}
As is well known, the micro-local decomposition, introduced by Seeger \cite{seeg}, plays an
important role in
the proof of the endpoint estimates for rough operators. Ding and the second author \cite{dinglai}
generalized Seeger's estimates in \cite{seeg} to certain
non-convolution operators, from which they established the weak type endpoint estimates
for Calder\'on commutator. To prove (\ref{eq3.last}), we need  some
ideas from \cite{seeg, dinglai}, together with some more refined micro-local decomposition estimate for
$\mathcal{H}^i_{m,a;j}$. Recall that
$$K_j^i(x)=\frac{\Omega_i(x)}{|x|^{d+1}}\eta_j(|x|),$$
and
$$\mathcal{H}_{m,a;\,j}^ih(x)=\int_{\mathbb{R}^d}\widetilde{K}_{j,m}^i(x-y)(a(x)-a(y))h(y)dy,$$
with
$$\widetilde{K}_{j,m}^i(x)=K_j^i*\widetilde{\psi}_{2^m-j}(x),\,\,\widetilde{\psi}_{2^m-j}(x)
=\psi_{2^{m}-j}(x)-\psi_{2^{m-1}-j}(x).$$

Let  $0<\iota<1$ be a constant which will be chosen later.
For $s\in\mathbb{N}$, we define $R^j_{s,a}$ as
	$$
		R^j_{s,a}(x,y)=\int_{\mathbb{R}^d}\psi_{\iota s+3-j}(x-z)(a(z)-a(y))dz.
	$$
We then have that
\begin{eqnarray}
&&|R^j_{s,a}(x,y)|=\Big|\int_{\mathbb{R}^d}\psi_{\iota s+3-j}(z)(a(x-z)-a(y))dz\Big|
\lesssim 2^j,\,\,\hbox{if}\,\,|x-y|\lesssim 2^j.
\end{eqnarray}
Define the operator $\mathcal{J}_{m,\,a;\,j}^i$ as
$$\mathcal{J}_{m,\,a;\,j}^ih(x)=\int_{\mathbb{R}^d}\widetilde{K}_{j,m}^i(x-y)R^j_{s,\,a}(x,\,y)h(y)dy.$$
Observe that
$$|R^j_{s,a}(x,y)-(a(x)-a(y))|\leq \int_{\mathbb{R}^d}\psi_{\iota s+3-j}(z)|a(x)-a(x-z)|dz
\lesssim2^{j-\iota s},$$
and
$$\|K_j^i*\psi_{2^m-j}-K_j^i*\psi_{2^{m-1}-j}\|_{L^1(\mathbb{R}^d)}\lesssim \|\Omega_i\|_{L^1(S^{d-1})}2^{-j}.
$$
It then follows that
\begin{eqnarray}\label{eq4.approximation}
\|\mathcal{J}_{m,\,a;\,j}^if-\mathcal{H}_{m,a;\,j}^if\|_{L^1(\mathbb{R}^d)}\lesssim 2^{-\iota s}\|\Omega_i\|_{L^1(S^{d-1})}
\|f\|_{L^1(\mathbb{R}^d)}.
\end{eqnarray}

For each cube $Q\in\mathcal{S}_{j-s}$,
we split $\mathcal{J}_{m,\,a;\,j}^ib^l_{Q,2}$ as $$\mathcal{J}_{m,\,a;\,j}^ib^l_{Q,2}=\Psi_{s\kappa-j}
\mathcal{J}_{m,\,a;\,j}^ib^l_{Q,2}+(I-\Psi_{s\kappa-j})\mathcal{J}_{m,\,a;\,j}^ib^l_{Q,2},$$ where
$\Psi_t$ is the convolution operator defined by
$\Psi_tf(x)=\psi_{t}*f(x)$,	
$\kappa \in (0,1)$ is a constant which will   be chosen later.
\begin{lemma}\label{lemma5.3}Under the hypothesis of Theorem \ref{dingli1.main}, we have that for
each $j\in\mathbb{Z}$, $s\geq 3$ and
$Q\in \mathcal{S}_{j-s}$, $$\big \| \Psi_{s\kappa -j} \mathcal{J}_{m,\,a;\,j}^ib^l_{Q,2}\big\|_{L^1(\mathbb{R}^d)}
\lesssim \big(2^{-(1-\kappa)s} +2^{-(1-\iota)s} \big) \|\Omega_i\|_{L^{\infty}({S}^{d-1})}
\|b^l_{Q,2}\|_{L^1(\mathbb{R}^d)}.$$	
\end{lemma}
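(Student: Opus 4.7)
The approach is to rewrite $\Psi_{s\kappa-j}\mathcal{J}_{m,\,a;\,j}^{i}$ as a sum of three convolution-type pieces, each of which admits a bound via the cancellation $\int b^{l}_{Q,2}\,dy=0$. Set $Lh=\widetilde{K}_{j,m}^{i}\ast h$, $\widetilde{a}=\psi_{\iota s+3-j}\ast a$, and $G=\psi_{s\kappa-j}\ast\widetilde{K}_{j,m}^{i}$; then $\mathcal{J}_{m,\,a;\,j}^{i}h=\widetilde{a}\cdot Lh-L(ah)$ and the commutator identity $\Psi_{s\kappa-j}(\widetilde{a}f)=\widetilde{a}\,\Psi_{s\kappa-j}f-[\widetilde{a},\Psi_{s\kappa-j}]f$ yields
\begin{equation*}
\Psi_{s\kappa-j}\mathcal{J}_{m,\,a;\,j}^{i}h=[\widetilde{a},G\ast]h+G\ast\bigl((\widetilde{a}-a)h\bigr)-[\widetilde{a},\Psi_{s\kappa-j}]Lh.
\end{equation*}
The main size estimates are $\|\widetilde{K}_{j,m}^{i}\|_{L^{1}}\lesssim 2^{-j}\|\Omega_{i}\|_{L^{\infty}(S^{d-1})}$, $\|\widetilde{a}-a\|_{\infty}\lesssim 2^{j-\iota s}$, and $\|\nabla^{2}\widetilde{a}\|_{\infty}\lesssim 2^{\iota s-j}$ (the last following from $\nabla^{2}\widetilde{a}=\nabla\psi_{\iota s+3-j}\ast\nabla a$).

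For the first two pieces, apply cancellation by subtracting the value at $y_{Q}$, the center of $Q$, under the integral. In either case one uses $\int|G(x-y)-G(x-y_{Q})|\,dx\lesssim |y-y_{Q}|\,\|\nabla G\|_{L^{1}}\lesssim 2^{j-s}\cdot 2^{s\kappa-j}\|\widetilde{K}_{j,m}^{i}\|_{L^{1}}\lesssim 2^{-(1-\kappa)s}2^{-j}\|\Omega_{i}\|_{L^{\infty}(S^{d-1})}$. Multiplying by $|\widetilde{a}(x)-\widetilde{a}(y)|\lesssim 2^{j}$ on the support of $G$ controls $[\widetilde{a},G\ast]b^{l}_{Q,2}$, while multiplying by $|\widetilde{a}(y)-a(y)|\lesssim 2^{j-\iota s}$ handles $G\ast((\widetilde{a}-a)b^{l}_{Q,2})$; the off-diagonal leftover pieces arising from the subtraction are smaller. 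Both are bounded by $C\,2^{-(1-\kappa)s}\|\Omega_{i}\|_{L^{\infty}(S^{d-1})}\|b^{l}_{Q,2}\|_{L^{1}}$.

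For the third piece $[\widetilde{a},\Psi_{s\kappa-j}]Lb^{l}_{Q,2}(x)=\int\psi_{s\kappa-j}(x-u)(\widetilde{a}(x)-\widetilde{a}(u))Lb^{l}_{Q,2}(u)\,du$, Taylor expand $\widetilde{a}(x)-\widetilde{a}(u)=\nabla\widetilde{a}(x)\cdot(x-u)+R_{x}(u)$ with $|R_{x}(u)|\lesssim |x-u|^{2}\cdot 2^{\iota s-j}$. The linear part equals $\nabla\widetilde{a}(x)\cdot H\ast b^{l}_{Q,2}(x)$ with $H=(w\psi_{s\kappa-j})\ast\widetilde{K}_{j,m}^{i}$; componentwise $\|\nabla H\|_{L^{1}}\lesssim\|\widetilde{K}_{j,m}^{i}\|_{L^{1}}$, so mean-zero cancellation of $b^{l}_{Q,2}$ supplies $|y-y_{Q}|\lesssim 2^{j-s}$ and produces a contribution of order $2^{-s}$. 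The Taylor remainder is bounded using $|x-u|\lesssim 2^{j-s\kappa}$ on the support of $\psi_{s\kappa-j}$ together with the trivial $\|Lb^{l}_{Q,2}\|_{L^{1}}\lesssim 2^{-j}\|\Omega_{i}\|_{L^{\infty}(S^{d-1})}\|b^{l}_{Q,2}\|_{L^{1}}$, giving $\lesssim 2^{(\iota-2\kappa)s}\|\Omega_{i}\|_{L^{\infty}(S^{d-1})}\|b^{l}_{Q,2}\|_{L^{1}}$, which is absorbed into $2^{-(1-\iota)s}$ once the constants are chosen with $\kappa\geq 1/2$.

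The main obstacle is this third commutator: a naive size bound retains only the Lipschitz control $|\widetilde{a}(x)-\widetilde{a}(u)|\lesssim 2^{j-s\kappa}$ on the support of $\psi_{s\kappa-j}$ and produces the insufficient factor $2^{-s\kappa}$. Peeling off the linear part of $\widetilde{a}(x)-\widetilde{a}(u)$, using cancellation of $b^{l}_{Q,2}$ against the vector-valued smoothed kernel $H$, and absorbing the quadratic remainder via the improved second-derivative estimate for the mollified symbol $\widetilde{a}$ are the decisive steps that yield the sharp bound.
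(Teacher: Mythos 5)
Your argument is correct in substance, but it follows a genuinely different route from the paper's. You exploit the identity $R^j_{s,a}(x,y)=\widetilde a(x)-a(y)$ with $\widetilde a=\psi_{\iota s+3-j}\ast a$, rewrite $\Psi_{s\kappa-j}\mathcal{J}^i_{m,a;\,j}$ as $[\widetilde a,\,G\ast]+G\ast((\widetilde a-a)\,\cdot)-[\widetilde a,\,\Psi_{s\kappa-j}]L$ with $G=\psi_{s\kappa-j}\ast\widetilde K^i_{j,m}$, and then use the mean-zero property of $b^l_{Q,2}$ piece by piece; the paper instead keeps the composite kernel intact, applies the cancellation of $b^l_{Q,2}$ once (subtracting the kernel at a point $y'\in Q$ and averaging over $Q$), changes variables so that the $y$-dependence sits only in the translate of $\psi_{s\kappa-j}$ and in $R^j_{s,a}$, and splits the resulting difference into two terms: one controlled by $\|\nabla\psi_{s\kappa-j}\|_{L^1}\,|y-y'|\lesssim 2^{-(1-\kappa)s}$ (this plays the role of your first two pieces) and one controlled by the Lipschitz continuity of $y\mapsto R^j_{s,a}(y+t+r\theta,y)$, which gives $2^{-(1-\iota)s}$ directly. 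The genuine divergence is your third term $[\widetilde a,\,\Psi_{s\kappa-j}]Lb^l_{Q,2}$, which does not arise in the paper's arrangement: its naive bound gives only $2^{-\kappa s}$, and your repair (Taylor expansion of $\widetilde a$, the improved bound $\|\nabla^2\widetilde a\|_{L^\infty}\lesssim 2^{\iota s-j}$, and cancellation against $H$) yields $2^{-s}+2^{(\iota-2\kappa)s}$, which matches the stated right-hand side only under the extra restriction $\kappa\geq 1/2$ — and under that same restriction the naive $2^{-\kappa s}$ bound already suffices, so the Taylor step, while valid, does not enlarge the admissible range. This restriction is harmless for the application, since $\kappa$ is later chosen freely subject to $0<\iota<\gamma<\varepsilon_0<\kappa<1$ (and any exponential decay $2^{-cs}$ would do once $N_1$ is taken large), but you should either record the constraint $\kappa\geq 1/2$ or keep the extra term $2^{(\iota-2\kappa)s}$ in the conclusion; the paper's single-subtraction argument gives the asserted bound for all $\kappa,\iota\in(0,1)$ and needs no second-derivative estimate on the mollified symbol.
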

\begin{proof}Write
\begin{eqnarray*}\Psi_{s\kappa-j}\mathcal{J}_{m,\,a;\,j}^ib_{Q,2}^l(x)=
\int_{\mathbb{R}^d}\int_{\mathbb{R}^d}\psi_{s\kappa-j}(x-y)\widetilde{K}^i_{j,m}(y-z)
R_{s,a}^{j}(y,z)b^l_{Q,2}(z)dz dy.
\end{eqnarray*}
A straightforward computation involving the vanishing moment of $b_{Q,2}^l$ leads to that
\begin{eqnarray*}
&&|\Psi_{s\kappa-j}\mathcal{J}_{m,\,a;\,j}^ib^l_{Q,2}(x)|
\leq\inf_{y'\in{Q}}\int_{\mathbb{R}^d}\int_{S^{d-1}}|\Omega_i(\theta)|
\Big|\int_{\mathbb{R}^d}\int_{0}^{\infty}(\psi_{s\kappa-j}(x-y-t-r{\theta})\\
&&\quad\times R_{s,a}^{j}(y+t+r{\theta},y)-
\psi_{s\kappa-j
}(x-y'-t-r{\theta})R_{s,a}^{j}(y'+t+r{\theta},y'))\\
&&\qquad\frac{\eta (2^{-j}r)}{r^2}drb^l_{Q,2} (y)dy\Big|dy'd\theta\widetilde{\psi}_{2^m-j}(t)dt
\end{eqnarray*}

Set
\begin{eqnarray*}{\rm U}(x)&=&\frac{1}{|Q|}\int_{\mathbb{R}^d}\int_{S^{d-1}}|\Omega_i(\theta)|
\int_{Q}\Big|
\int_{\mathbb{R}^d}\int_{2^{j-1}}^{2^j}\Big(\psi_{s\kappa-j}(x-y-t-r\theta)\\
&-&\psi_{s\kappa-j}(x-y'-t-r\theta)\Big)R_{s,a}^{j}(y+t+r\theta,y){\frac{dr}{r^2}}b^l_{Q,2}(y)dy
\Big|dy'd\theta \widetilde{\psi}_{2^m-j}(t)dt
\end{eqnarray*}
and
\begin{eqnarray*}&&{\rm V}(x)=\frac{1}{|Q|}\int_{\mathbb{R}^d}\int_{S^{d-1}}|\Omega_i(\theta)|
\int_{Q}\Big|\int_{\mathbb{R}^d}
\int_{2^{j-1}}^{2^j}\psi_{s\kappa-j}(x-y'-t-r\theta) \\
&&\quad\times \Big(R_{s,a}^{j}(y+t+r\theta,y)-R_{s,a}^{j}(y'+t+r\theta,y')\Big){\frac{dr}{r^2}}b^l_{Q,2}(y)dy\Big|dy'd
\theta \widetilde{\psi}_{2^m-j}(t)dt.
\end{eqnarray*}
It then follows that
$$|\Psi_{s\kappa-j}\mathcal{J}_{m,\,a;\,j}^ib^l_{Q,2}(x)|\leq {\rm U}(x)+{\rm V}(x).$$
Note that
\begin{eqnarray*}|R_{s,a}^j(y+t+r\theta,y)|&\leq&\int_{\mathbb{R}^d}|\psi_{\iota s+3-j}(y+t+r\theta-z)
(a(z)-a(y))|dz\\
&\lesssim &2^{j-\iota s-3}+r+|t|.
\end{eqnarray*}A trivial computation leads to that
\begin{eqnarray*}
\|{\rm U}\|_{L^1(\mathbb{R}^d)}&\lesssim&\int_{\mathbb{R}^d}\int_{S^{d-1}}|\Omega_i(\theta)|
\int_{\mathbb{R}^d}
\int_{2^{j-1}}^{2^j}2^{-j+s\kappa}\|\nabla{\psi}\|_{L^1}2^{j-s}(2^{j-\iota s-3}+r+|t|){\frac{dr}{r^2}}\\
&&\quad\times|b^l_{Q,2}(y)|
dyd\theta \widetilde{\psi}_{2^m-j}(t)dt \\
&\lesssim &2^{-(1-\kappa)s}\|\Omega_i\|_{L^{\infty}(S^{d-1})}\|b^l_{Q,2}\|_{L^1(\mathbb{R}^d)}.
\end{eqnarray*}
On the other hand, we have
\begin{eqnarray*}
&&\int_{\mathbb{R}^d}\int_{Q}\int_{Q}|R_{s,a}^{j}(y+t+r\theta,y)-R_{s,a}^j(y'+t+r\theta,y')|
|b^l_{Q,2}(y)| |\widetilde{\psi}_{2^m-j}(t)|dydy'dt \\
&&\quad\lesssim2^{-(1-\iota)s}2^{j}|Q|\|b^l_{Q,2}\|_{L^1(\mathbb{R}^d)},
\end{eqnarray*}
which implies that
$$ \|{\rm V}\|_{L^1(\mathbb{R}^d)}\lesssim2^{-(1-\iota)s}\|\Omega_i\|_{L^{\infty}(S^{d-1})}
 \|b^l_{Q,2}\|_{L^1(\mathbb{R}^d)}.$$
This finish the proof of Lemma \ref{lemma5.3}.
\end{proof}

For $s>3$, let  $\mathfrak{E}^s=\{e_{\nu}^s\}_{\nu\in\Lambda_s}$ be a collection of unit vectors on $S^{d-1}$ such that
\begin{itemize}
\item[\rm (a)] $|e_{\nu}^s-e_{\nu'}^s|>2^{-s\gamma-4}$ when $\nu\not=\nu'$;
\item[\rm (b)] for each $\theta\in S^{d-1}$, there exists an $e^s_{\nu}$ such that $|e^s_{\nu}-\theta|\leq 2^{-s\gamma-4},$
\end{itemize}
where $\gamma\in (0,\,1)$ is a constant.  The set $\mathfrak{E}^s$ can be constructed as in \cite[Section 2]{dinglai}.  Observe that ${\rm card}(\mathfrak{E}^s)\lesssim 2^{s\gamma(d-1)}$.
Let $\zeta$  be a smooth, nonnegative, radial function, such that ${\rm supp}\, \zeta\subset B(0,\,1)$ and $\zeta(t)=1$ for $|t|\leq 1/2$. Set
$$\widetilde{\Gamma}_\nu^s(\xi)=\zeta\Big(2^{s\gamma}\big(\frac{\xi}{|\xi|}-e^s_{\nu}\big)\Big)$$
and$$\Gamma_{\nu}^s(\xi)=\widetilde{\Gamma}^s_{\nu}(\xi)\Big(\sum_{\nu\in\Lambda_s}\widetilde{\Gamma}^s_{\nu}(\xi)\Big)^{-1}.$$ It is easy to verify that $\Gamma^s_{\nu}$ is homogeneous of degree zero, and for all $s$,
$$\sum_{\nu\in\Lambda_s}\Gamma^s_{\nu}(\xi)=1,\,\, \xi\in S^{d-1}.
 $$

Let $\varpi\in C^{\infty}_0(\mathbb{R})$ such that $0\leq \varpi\leq 1$,  ${\rm supp}\, \varpi\subset [-4,\,4]$ and $\varpi(t)\equiv 1$ when $t\in [-2,\,2]$. Define the multiplier operator $G_{\nu}^s$ by
$$\widehat{G_{\nu}^sf}(\xi)=\varpi\big(2^{s\gamma}\langle \xi/|\xi|, e_{\nu}^s\rangle\big)\widehat{f}(\xi),
$$
Let $\mathcal{J}_{m,\,a;\,j,\nu}^{i, s}$ be defined by
$$\mathcal{J}_{m,\,a;\,j,\nu}^{i, s}h(x)=\int_{\mathbb{R}^d}\widetilde{K}_{j,m}^i(x-y)\Gamma^s_{\nu}(x-y)R^j_{s,\,a}(x,\,y)h(y)dy.$$
It is obvious that for each fixed $s\in\mathbb{N}$,
$$\sum_{\nu\in\Lambda_s}\mathcal{J}_{m,\,a;\,j,\nu}^{i, s}h(x)=\mathcal{J}_{m,\,a;\,j}^{i}h(x).$$

\begin{lemma}\label{lemma5.4}
Under the hypothesis of Theorem \ref{dingli1.main}, we have that for each $s\geq 3$,
$$\Big\|\sum_{j}\sum_{\nu}G_{\nu}^s(I-\Psi_{s\kappa -j})\mathcal{J}_{m,\,a;\,j,\nu}^{i, s}B_{j-s,2}^l
\Big\|^2_{L^2(\mathbb{R}^d)}\lesssim \|\Omega_i\|_{L^{\infty}(S^{d-1})}^22^{-s\gamma }
\sum_{Q\in\mathcal{S}}\|b_{Q,2}^l\|_{L^1(\mathbb{R}^d)}.$$
\end{lemma}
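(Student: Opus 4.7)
The plan is to carry out a Seeger-type almost-orthogonality argument, in the spirit of \cite{seeg,dinglai}, adapted to the non-convolution commutator operator $\mathcal{J}_{m,a;j,\nu}^{i,s}$. Writing $F_{j,\nu}=G_\nu^s(I-\Psi_{s\kappa-j})\mathcal{J}_{m,a;j,\nu}^{i,s}B_{j-s,2}^l$, the goal is to control $\|\sum_{j,\nu}F_{j,\nu}\|_{L^2}^2$ by expanding it as a double sum $\sum_{j,j'}\sum_{\nu,\nu'}\langle F_{j,\nu},F_{j',\nu'}\rangle$ and establishing approximate orthogonality separately in the two indices.

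For the $\nu$-orthogonality, the multiplier $\widehat{G_\nu^s}$ is supported in the equatorial belt $\{\xi:|\langle\xi/|\xi|,e_\nu^s\rangle|\le 4\cdot 2^{-s\gamma}\}$, while the physical-space cone cutoff $\Gamma_\nu^s(x-y)$ inside $\mathcal{J}_{m,a;j,\nu}^{i,s}$ forces, via a stationary-phase / uncertainty-principle argument at scale $2^j$, the Fourier transform of the kernel to concentrate in precisely this belt. Thus $G_\nu^s$ acts essentially as a Fourier projector, and distinct $\nu$ give almost orthogonal contributions. For the $j$-orthogonality, the kernel of $\mathcal{J}_{m,a;j,\nu}^{i,s}$ lives at spatial scale $2^j$ so its Fourier transform is concentrated near $|\xi|\sim 2^{-j}$; moreover $(I-\Psi_{s\kappa-j})$ contributes the symbol $1-\widehat\psi(2^{j-s\kappa}\xi)$, which vanishes to order two at the origin. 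A Littlewood-Paley-type decoupling across scales $j$, combined with the $\nu$-orthogonality, then reduces the matter to the diagonal sum $\sum_{j,\nu}\|F_{j,\nu}\|_{L^2}^2$.

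Each diagonal term is controlled through the pointwise kernel bound $|\widetilde K_{j,m}^i\Gamma_\nu^s R^j_{s,a}|\lesssim \|\Omega_i\|_{L^\infty(S^{d-1})}\cdot 2^{-jd}$, together with the fact that this kernel is supported in a set of angular measure $2^{-s\gamma(d-1)}$ at radial scale $2^j$, and the $L^1$ control $\|B_{j-s,2}^l\|_{L^1}=\sum_{Q\in\mathcal{S}_{j-s}}\|b_{Q,2}^l\|_{L^1}$; the zero-mean cancellation of each $b_{Q,2}^l$ supplies an additional $2^{-s}$ gain via a Schur-type argument. Summing over $\nu\in\Lambda_s$ of cardinality $\sim 2^{s\gamma(d-1)}$ then produces the advertised factor $2^{-s\gamma}$ after combining with $\|\Omega_i\|_{L^\infty(S^{d-1})}^2$.

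The principal obstacle is the factor $R^j_{s,a}(x,y)$ in $\mathcal{J}_{m,a;j,\nu}^{i,s}$, which genuinely depends on both $x$ and $y$ and obstructs a direct application of Plancherel's theorem to the full operator. My plan is to freeze $R^j_{s,a}$ at representative points (say the centers $z_Q$ of the cubes $Q\in\mathcal{S}_{j-s}$), reducing each $\mathcal{J}_{m,a;j,\nu}^{i,s}b_{Q,2}^l$ to an essentially convolution operator amenable to Fourier analysis, and to absorb the freezing error using $\|\nabla a\|_{L^\infty}$ at cost $O(2^{-\iota s})$; this is acceptable provided the parameters $\iota,\kappa,\gamma$ are chosen in the correct order so that all error terms are dominated by the target gain $2^{-s\gamma}$.
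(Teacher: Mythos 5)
Your reduction to the diagonal is where the argument breaks down, and both of the mechanisms you invoke for it are unavailable here. First, in $\nu$: the multipliers $\varpi(2^{s\gamma}\langle\xi/|\xi|,e_\nu^s\rangle)$ are supported on equatorial belts that overlap with multiplicity $\sim 2^{s\gamma(d-2)}$ (this is exactly the estimate $\sup_{\xi\neq0}\sum_\nu|\varpi(2^{s\gamma}\langle \xi/|\xi|,e_\nu^s\rangle)|^2\lesssim 2^{s\gamma(d-2)}$ quoted from \cite{dinglai}), so for $d\ge3$ distinct $\nu$ are far from orthogonal; this loss must be beaten by a per-$\nu$ gain of order $2^{-2s\gamma(d-1)}$, not the $2^{-s\gamma(d-1)}$ your bookkeeping supplies. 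Second, in $j$: since $\Omega_i$ has no vanishing moment, $\widehat{K_j^i}$ has no decay at low frequency and only $|2^j\xi|^{-\alpha}$ decay at high frequency, so the pieces are not frequency-localized near $|\xi|\sim2^{-j}$ and a Littlewood--Paley decoupling across $j$ cannot be run off the shelf; moreover the commutator factor $R^j_{s,a}(x,y)=(\psi_{\iota s+3-j}*a)(x)-a(y)$ keeps the operator non-convolutional even after freezing $y$ at $z_Q$ (the $x$-dependent factor remains), and your freezing-error bound $O(2^{-\iota s})$ is an $L^1$-type estimate that does not by itself yield the required $L^2$-squared bound against $\sum_Q\|b^l_{Q,2}\|_{L^1}$. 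Third, the ``additional $2^{-s}$ gain from the zero mean of $b^l_{Q,2}$'' is false in the relevant range: the kernel $\widetilde K^i_{j,m}=K_j^i*\widetilde\psi_{2^m-j}$ varies at scale $2^{j-2^m}$, while the cubes have side $2^{j-s}$ with $m\le l+i^*$ and $s>N_1(l+i)$, so $2^m$ can be exponentially large in $l$ while $s$ is only linear; a regularity/Schur argument against the cube mean gains only $2^{2^m-s}$, which need not be small. Indeed the paper uses no cancellation of $b^l_{Q,2}$ in this lemma.

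For comparison, the paper's proof avoids Fourier analysis on the kernel entirely at this stage. By duality and Cauchy--Schwarz in $\nu$, the factor $\big\|(\sum_\nu|G_\nu^sf|^2)^{1/2}\big\|_{L^2}\lesssim 2^{s\gamma(d-2)/2}\|f\|_{L^2}$ absorbs the belt overlap, and the problem becomes the single-$\nu$ bound $\big\|\sum_j(I-\Psi_{s\kappa-j})\mathcal{J}^{i,s}_{m,a;j,\nu}B^l_{j-s,2}\big\|_{L^2}^2\lesssim 2^{-2s\gamma(d-1)}\|\Omega_i\|_{L^\infty}^2\sum_Q\|b^l_{Q,2}\|_{L^1}$. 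This is proved by dominating each piece pointwise by $\mathscr{H}^{i,s}_{j,\nu}*|b^l_{Q,2}|$, where $\mathscr{H}^{i,s}_{j,\nu}=2^j|\widetilde K^i_{j,m}\Gamma_\nu^s|$ is supported in a tube of dimensions $2^j\times(2^{j-s\gamma})^{d-1}$ with $\|\mathscr{H}^{i,s}_{j,\nu}\|_{L^\infty}\lesssim\|\Omega_i\|_{L^\infty}2^{-jd}$, then expanding the square keeping all cross-scale terms $u\le j$ and using that the Calder\'on--Zygmund cubes are pairwise disjoint with $\|b^l_{I,2}\|_{L^1}\lesssim|I|$, so the total mass of $B$ meeting any translate of the tube is at most its volume $\sim2^{jd}2^{-s\gamma(d-1)}$. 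This geometric counting — which uses the disjointness and normalized size of the cubes, an ingredient absent from your sketch — is what produces the $2^{-2s\gamma(d-1)}$ per-$\nu$ gain, and combined with ${\rm card}(\mathfrak{E}^s)\lesssim2^{s\gamma(d-1)}$ and the overlap factor $2^{s\gamma(d-2)}$ gives precisely $2^{-s\gamma}$. You would need to either reproduce this tube-overlap argument or supply a genuinely quantitative orthogonality scheme; as written, the proposal does not close.
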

\begin{proof}
For   $f\in L^2(\mathbb{R}^d)$, it follows from Cauchy-Schwarz inequality that
\begin{eqnarray*}
&&\Big|\sum_{j}
\sum_{\nu}\int_{\mathbb{R}^d}G_{\nu}^s(I-\Psi_{s\kappa -j})\mathcal{J}_{m,\,a;\,j,\nu}^{i, s}B^l_{j-s,2}(x)f(x)dx\Big|\\
&&\quad=\Big|\int_{\mathbb{R}^d}
\sum_{\nu}G_{\nu}^sf(x)\sum_{j}(I-\Psi_{s\kappa -j})\mathcal{J}_{m,\,a;\,j,\nu}^{i, s}B_{j-s,2}^l(x)dx\Big|\\
&&\quad\leq\Big\|\Big(\sum_{\nu}|G_{\nu}^sf|^2\Big)^{\frac{1}{2}}\Big\|_{L^2(\mathbb{R}^d)}
\Big(\sum_{\nu}\Big\|\sum_{j}(I-\Psi_{s\kappa -j})\mathcal{J}_{m,\,a;\,j,\nu}^{i, s}B_{j-s,2}^l\Big\|^2_{L^2(\mathbb{R}^d)}\Big)^{\frac{1}{2}}.
\end{eqnarray*}
Plancherel's theorem, via the estimate
$$\sup_{\xi\not=0}\sum_{\nu}|\psi(2^{s\gamma}\langle e^s_{\nu},\xi/|\xi|\rangle)|^2\lesssim 2^{s\gamma(d-2)}
$$
(see \cite[inequality (3.1)]{dinglai}), implies that
\begin{eqnarray}\label{equation3.3}\Big\|\Big(\sum_{\nu}|G_{\nu}^sf|^2\Big)^{\frac{1}{2}}\Big\|_{L^2(\mathbb{R}^d)}^2 &=&\sum_{\nu}\int_{\mathbb{R}^d}|\psi(2^{s\gamma}\langle \xi/|\xi|, e_{\nu}^s\rangle)|^2|\widehat{f}(\xi)|^2d\xi\\ &\lesssim&2^{s\gamma(d-2)}\|f\|_{L^2(\mathbb{R}^d)}^2.\nonumber\end{eqnarray}
Recall that ${\rm card}(\mathfrak{E}^s)\lesssim 2^{\gamma s(d-1)}$. It suffices to prove that for each fixed $\nu\in\Lambda_s$,
\begin{eqnarray}\label{equation5.4}
&&\Big\|\sum_{j}(I-\Psi_{s\kappa -j})\mathcal{J}_{m,\,a;\,j,\nu}^{i, s}B^l_{j-s,2}\Big\|^2_{L^2(\mathbb{R}^d)}\\
&&\quad\lesssim  2^{-2s\gamma(d-1)}\|\Omega_i\|^2_{L^{\infty}(S^{d-1})}
\sum_{Q\in\mathcal{S}}\|b_{Q,2}^l\|_{L^1(\mathbb{R}^d)}.\nonumber
\end{eqnarray}

We now prove (\ref{equation5.4}).
For each fixed $j$, $Q\in\mathcal{S}_{j-s}$, and $x\in\mathbb{R}^d$, write
\begin{eqnarray*}
&&\big|	(I-\Psi_{s\kappa -j})\mathcal{J}_{m,\,a;\,j,\nu}^{i, s}b^l_{Q,2}(x)\big|\label{hjk} \\
&&\quad\lesssim  \int_{\mathbb{R}^d}|\widetilde{K}_{j,m}^i(x-y)||\Gamma^j_{\nu}(x-y)|
|R_{s,a}^{j}(x,y)||b^l_{Q,2}(y)|dy \\
			&&\qquad+ \int_{\mathbb{R}^{d}}\int_{\mathbb{R}^{d}} |\widetilde{K}_{j,m}^i(z-y)|
|\psi_{s\kappa-j}(x-z)| |\Gamma^j_{\nu}(z-y)||R_{s,a}^{j}(z,y)|dz|b^l_{Q,2}(y)|dy\\
			&&\quad\lesssim \mathscr{H}_{j,\nu}^s*|b^l_{Q,2}|(x),
\end{eqnarray*}
where $\mathscr{H}_{j,\nu}^{i,s}(y)=2^j\big|\widetilde{K}_{j,m}^i(y)\Gamma^j_{\nu}(y)\big|$.  Write
\begin{eqnarray*}
&&\Big\|\sum_{j}\sum_{Q\in{j-s}}(I-\Psi_{s\kappa -j})\mathcal{J}_{m,\,a;\,j,\nu}^{i, s}b^l_{Q,2}
\Big\|_{L^2(\mathbb{R}^d)}^2\\
&&\quad\lesssim\sum_{j}\sum_{Q\in{S_{j-s}}}\sum_{I\in{S_{j-s}}}\int_{\mathbb{R}^d}
(\mathscr{H}_{j,\nu}^{i,s}*
\mathscr{H}_{j,\nu}^{i,s}*|b_{I,2}^l|)(x)
|b^l_{Q,2}(x)|dx\\
&&\qquad+2\sum_{j}\sum_{Q\in\mathcal{S}_{j-s}}\sum_{u<j}\sum_{I\in\mathcal{S}_{u-s}}
\int_{\mathbb{R}^d}(\mathscr{H}_{j,\nu}^{i,s}*\mathscr{H}_{u,\nu}^{i,s}*|b^l_{I,2}|)(x)|b^l_{Q,2}(x)|dx.
\end{eqnarray*}
Let
$$\mathcal{R}_{j,\nu}^{s}={\{x\in\mathbb{R}^d:\,|\inn{x}{e_v^s}|\leq2^{j+3},|x-\inn{x}{e_{\nu}^s}e_{\nu}^s|
\leq2^{j+3-s\gamma}\}},$$
and $\widetilde{\mathcal{R}}_{j,\nu}^s=\mathcal{R}_{j,\nu}^s+\mathcal{R}_{j,\nu}^s.$ Observe that $\mathcal{R}_{j,\nu}^s$ is contained in a box having one long side of length
$\lesssim2^j$ and $(d-1)$ short sides of length $\lesssim2^{j-s\gamma}$.
For each fixed $j$, $Q\in\mathcal{S}_{j-s}$ and  $x\in{Q}$, we obtain
\begin{eqnarray*}
&&\sum_{u\leq j}\sum_{I\in\mathcal{S}_{u-s}}\big(\mathscr{H}_{j,\nu}^{i,s}*\mathscr{H}_{u,\nu}^{i,s}*|b^l_{I,2}|
\big)(x)\\
&&\quad\leq \|\mathscr{H}_{j,\nu}^{i,s}\|_{L^{\infty}(\mathbb{R}^d)}
\sum_{u\leq j}\|\mathscr{H}_{u,\nu}^{i,s}\|_{L^{1}(\mathbb{R}^d)}\sum_{I\in\mathcal{S}_{u-s}}
\|b_{I,2}^l\|_{L^1(\mathbb{R}^d)}\\
&&\quad\lesssim \|\Omega_i\|_{L^{\infty}(S^{d-1})}^22^{-jd}2^{-s\gamma(d-1)}\sum_{u\leq j}
\sum_{I\in\mathcal{S}_{u-s}}
\int_{x+\widetilde{\mathcal{R}}_{j,\nu}^s}|b_{I,2}^l(y)|dy\\
&&\quad\lesssim \|\Omega_i\|_{L^{\infty}(S^{d-1})}^22^{-jd}2^{-s\gamma(d-1)}\sum_{u\leq j}
\sum_{I\in\mathcal{S}_{u-s}\atop{I\cap \widetilde{\mathcal{R}}_{j,\nu}^s\not =\emptyset }}|I|\\
&&\quad\lesssim  2^{-2s\gamma(d-1)}\|\Omega_i\|_{L^{\infty}(S^{d-1})}^2.
\end{eqnarray*}
This verifies (\ref{equation5.4}) and  completes the proof of Lemma \ref{lemma5.4}.
\end{proof}

\begin{lemma}\label{lemma5.5} For each fixed $j\in\mathbb{Z}$, $s\in\mathbb{N}$ and $Q\in\mathcal{S}_{j-s}$, there exists a positive constant $\varrho>0$ such that
$$\sum_{\nu}  \| (I-{G_{\nu}^s)}(I-\Psi_{s\kappa -j})\mathcal{J}_{m,\,a;\,j,\nu}^{i, s}b^l_{Q,2}
\|_{L^1(\mathbb{R}^d)}\lesssim  2^{-\varrho s}  \|\Omega_i\|_{L^{\infty}(\mathbb{S}^{d-1})} \|b^l_{Q,2}\|_{L^1(\mathbb{R}^d)}.$$
\end{lemma}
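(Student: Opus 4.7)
The plan is to follow the micro-local decomposition scheme of Seeger \cite{seeg}, as adapted to non-convolution Calder\'on-type commutators in \cite{dinglai}. First I would fix $y\in\mathbb{R}^d$ and study the kernel $L_\nu(x,y):=\widetilde{K}_{j,m}^i(x-y)\Gamma_\nu^s(x-y)R_{s,a}^j(x,y)$ of $\mathcal{J}_{m,a;j,\nu}^{i,s}$. Because of the angular cut-off $\Gamma_\nu^s$ together with the radial cut-off $\eta_j$, this kernel is supported in $x$ in a tube $T_{\nu,y}$ of axis $e_\nu^s$, length $\sim 2^j$, and transverse diameter $\sim 2^{j-s\gamma}$ centered at $y$. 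Since $I-G_\nu^s$ and $I-\Psi_{s\kappa-j}$ are convolution operators with Schwartz-tail kernels, $(I-G_\nu^s)(I-\Psi_{s\kappa-j})L_\nu(\cdot,y)$ is essentially concentrated in an enlarged tube $T^*_{\nu,y}$ of comparable volume $|T^*_{\nu,y}|\lesssim 2^{jd-s\gamma(d-1)}$. I would then apply the Cauchy--Schwarz inequality on $T^*_{\nu,y}$ and a Schwartz decay estimate on its complement to reduce matters to the $L^2$ bound
$$\|(I-G_\nu^s)(I-\Psi_{s\kappa-j})L_\nu(\cdot,y)\|_{L^2}\lesssim 2^{-\varrho_0 s}\,2^{-jd/2+s\gamma(d-1)/2}\|\Omega_i\|_{L^\infty(S^{d-1})},$$
for some fixed $\varrho_0>0$, uniformly in $y$, since integrating against $|b_{Q,2}^l(y)|$ and summing over $\nu\in\Lambda_s$ (which has cardinality $\lesssim 2^{s\gamma(d-1)}$) would then deliver the claim with some $\varrho>0$.

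The $L^2$ estimate is the heart of the proof, and it is where the non-convolution nature of $\mathcal{J}_{m,a;j,\nu}^{i,s}$ must be handled. Following the idea in \cite{dinglai}, I would split
$$R_{s,a}^j(x,y)=\bigl(\widetilde{a}(x)-\widetilde{a}(y)\bigr)+\bigl(\widetilde{a}(y)-a(y)\bigr),\qquad \widetilde{a}:=\psi_{\iota s+3-j}*a,$$
noting that the Fourier transform of $\widetilde{a}$ is essentially supported in $|\xi|\lesssim 2^{\iota s-j}$, while $\|\widetilde{a}-a\|_{L^\infty}\lesssim 2^{j-\iota s}$. The piece involving $\widetilde{a}(y)-a(y)$ is a pure convolution operator (in $x$) with a small prefactor $O(2^{-\iota s})$, and Plancherel combined with the Fourier transform estimates of $\widetilde{K}_{j,m}^i\Gamma_\nu^s$ gives the desired bound. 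For the piece involving $\widetilde{a}(x)-\widetilde{a}(y)$, the crucial observation is that, since the factor $\widetilde{a}(x)$ has frequency content only in $|\xi|\lesssim 2^{\iota s-j}$, the Fourier transform in $x$ of the whole kernel is (up to a controlled error) concentrated on the same narrow cone around $e_\nu^s$ on which the convolution piece $\widetilde{K}_{j,m}^i\Gamma_\nu^s$ is concentrated in frequency. On that cone $I-G_\nu^s$ vanishes identically, so Plancherel again yields a gain of $2^{-\varrho_0 s}$.

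Putting everything together, the $L^1$ bound for each $\nu$ is at most $2^{-\varrho_0 s}\|\Omega_i\|_{L^\infty(S^{d-1})}|T^*_{\nu,y}|^{1/2}\cdot 2^{-jd/2+s\gamma(d-1)/2}\|b^l_{Q,2}\|_{L^1}$ after integrating in $y$; summing over the $\lesssim 2^{s\gamma(d-1)}$ directions gives the final decay $2^{-\varrho s}\|\Omega_i\|_{L^\infty(S^{d-1})}\|b^l_{Q,2}\|_{L^1}$ for any sufficiently small $\varrho>0$, provided the small parameters $\kappa,\iota,\gamma\in(0,1)$ are chosen so that the combined exponent $\varrho_0-(\text{losses from } |T^*|^{1/2}\text{ and } \#\Lambda_s)$ is positive. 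The main obstacle will be the second step: carrying out the Plancherel computation carefully when $R_{s,a}^j$ is not a difference of values of $a$ but a smoothed analogue, and tracking how the frequency supports introduced by $\psi_{\iota s+3-j}$, $\Psi_{s\kappa-j}$, and $G_\nu^s$ interact. Once the right splitting of $R_{s,a}^j$ is in place, the remaining work is bookkeeping of the exponents $\iota,\kappa,\gamma$ so that all error terms are dominated by a common $2^{-\varrho s}$.
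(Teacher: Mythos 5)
Your outline diverges from the paper's argument (which runs a Littlewood--Paley decomposition $\Lambda_k$ through the operator and plays two $L^1$ kernel bounds against each other, splitting the frequency sum at $k=j-\lfloor s\varepsilon_0\rfloor$), and as written it has two genuine gaps. The first is the spatial-localization step: $G_\nu^s$ is a multiplier operator with symbol $\varpi\big(2^{s\gamma}\langle\xi/|\xi|,e_\nu^s\rangle\big)$, which is homogeneous of degree zero and singular at the origin; its convolution kernel has only Calder\'on--Zygmund-type decay, not Schwartz tails, and $G_\nu^s$ is not even bounded on $L^1(\mathbb{R}^d)$ uniformly in $s,\nu$. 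Hence the claim that $(I-G_\nu^s)(I-\Psi_{s\kappa-j})L_\nu(\cdot,y)$ is ``essentially concentrated'' in an enlarged tube $T^*_{\nu,y}$ fails, and with it the Cauchy--Schwarz reduction of the $L^1$ bound to an $L^2$ bound on a set of measure $\sim 2^{jd-s\gamma(d-1)}$. This is precisely why the paper first inserts $\Lambda_k=\widetilde\Lambda_k\Lambda_k$ and only ever uses $(I-G_\nu^s)\Lambda_k$ and $(I-\Psi_{s\kappa-j})\widetilde\Lambda_k$, whose $L^1\to L^1$ norms are controlled ($\lesssim 1$ and $\lesssim 2^{j-s\kappa-k}$ respectively), combined with a per-$y$ $L^1$ estimate for the kernel of $(I-G_\nu^s)\Lambda_k\mathcal{J}^{i,s}_{m,a;j,\nu}$ taken from Lemma 4.2 of \cite{dinglai}.

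The second gap is the exponent bookkeeping in the sum over $\nu$. In the paper both frequency regimes produce per-$\nu$ bounds carrying the factor $2^{-s\gamma(d-1)}$ (inherited from the angular support of $\Gamma^s_\nu$), which exactly offsets ${\rm card}(\mathfrak{E}^s)\lesssim 2^{s\gamma(d-1)}$, so only a small residual loss $2^{\iota s+s\gamma\upsilon}$ (resp.\ $2^{s\varepsilon_0}$) must be beaten, and the constraints $0<\iota<\gamma<\varepsilon_0<\kappa<1$ suffice. In your scheme the Cauchy--Schwarz step spends that smallness: your per-$\nu$ conclusion is $2^{-\varrho_0 s}\|\Omega_i\|_{L^\infty}\|b^l_{Q,2}\|_{L^1}$ with no $2^{-s\gamma(d-1)}$, so summing over directions forces $\varrho_0>\gamma(d-1)$. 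Note also that your claimed $L^2$ bound $2^{-\varrho_0 s}2^{-jd/2+s\gamma(d-1)/2}$ is weaker than the trivial $L^2$ bound $\lesssim \|\Omega_i\|_{L^\infty}2^{-jd/2-s\gamma(d-1)/2}$ of $L_\nu(\cdot,y)$ unless $\varrho_0>\gamma(d-1)$, so the whole content of the lemma is hidden in producing a gain strictly larger than the directional cardinality. The heuristic you offer for it (the Fourier transform of the kernel lives where $I-G_\nu^s$ vanishes, i.e.\ near the hyperplane $e_\nu^\perp$ rather than a ``cone around $e_\nu^s$'') is the right moral idea, but the actual decay off that region is limited: each integration by parts along $e_\nu^s$ costs a factor $\sim 2^{s\gamma-j}$ from $\Gamma^s_\nu$ (and additional losses from $\widetilde\psi_{2^m-j}$ and $R^j_{s,a}$), so in the relevant frequency range one gains only about $2^{-s(\kappa-2\gamma)}$ per step; getting past $2^{s\gamma(d-1)}$ would require a quantitative multi-fold integration-by-parts argument with its own parameter constraints, which the proposal does not carry out. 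The splitting $R^j_{s,a}(x,y)=(\widetilde a(x)-\widetilde a(y))+(\widetilde a(y)-a(y))$ is a sensible way to handle the non-convolution structure (and the second piece indeed gives $2^{-\iota s}$ directly), but as it stands the argument does not close.
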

\begin{proof}
Let $\alpha$ be a radial $C^{\infty}$ function such that $\alpha(\xi)=1$ for $|\xi|\leq1$,
$\alpha(\xi)=0$ for $|\xi|\geq2$ and $0\leq\alpha(\xi)\leq1$ for all $\xi\in\mathbb{R}^d.$
Set $\beta_{k}(\xi)=\alpha(2^k\xi)-\alpha(2^{k+1}\xi).$ Let  $\widetilde\beta$ be a radial $C^{\infty}$
function such that $\widetilde\beta(\xi)=1$ for $\frac{1}{2}\leq|\xi|\leq2,$
${\rm supp}\,\widetilde\beta\in[1/4,4]$ and $0\leq\widetilde\beta\leq1$ for all
$\xi\in\mathbb{R}^d$. Set $\widetilde\beta_k(\xi)=\widetilde\beta(2^k\xi).$  Define  multipliers
operator  $\Lambda_{k}$ and $\widetilde\Lambda_{k}$ by
$$\widehat{\Lambda_{k}f}(\xi)=\beta_{k}(\xi)\widehat f(\xi),\,\,\widehat{\widetilde\Lambda_{k}f}(\xi)
=\widetilde\beta_{k}(\xi)\widehat f(\xi).$$
Observe that
$\beta_k=\widetilde\beta_{k}\beta_{k},$  and so $\Lambda_{k}=\widetilde\Lambda_{k}\Lambda_{k}$. It then follows that
\begin{eqnarray}
(I-G_{\nu}^s)\mathcal{J}_{m,\,a;\,j,\nu}^{i, s}=\sum_{k}(I-G_{\nu}^s)\Lambda_{k}\mathcal{J}_{m,\,a;\,j,\nu}^{i, s},
\end{eqnarray}and
\begin{eqnarray*}
&&\|(I-G_{\nu}^s)(I-\Psi_{s\kappa -j})\Lambda_{k}\mathcal{J}_{m,\,a;\,j,\nu}^{i, s}b^l_{Q,2}
\|_{L^1(\mathbb{R}^d)}\\
&&\quad\leq\|(I-\Psi_{s\kappa -j})\widetilde\Lambda_{k}(I-G_{\nu}^s)\Lambda_{k}
\mathcal{J}_{m,\,a;\,j,\nu}^{i, s}
b^l_{Q,2}\|_{
L^1(\mathbb{R}^d)}\\
&&\quad\leq\|(I-\Psi_{s\kappa -j})\widetilde\Lambda_{k}\|_{L^1(\mathbb{R}^d)\rightarrow L^1(\mathbb{R}^d)}
\|(I-G_{\nu}^s)\Lambda_k\mathcal{J}_{m,\,a;\,j,\nu}^{i, s}b^l_{Q,2}\|_{L^1(\mathbb{R}^d)}.
\end{eqnarray*}
Now let $\widetilde{L}(x,\,y)$ be the kernel of the operator $(I-G_{\nu}^s)\Lambda_k
\mathcal{J}_{m,\,a;\,j,\nu}^{i,s}$,
that is,
\begin{eqnarray*}
(I-G_\nu^s)\Lambda_{k}\mathcal{J}_{m,\,a;\,j,\nu}^{i,s}b^l_{Q,2}(x)
=\int_{\mathbb{R}^d}\widetilde{L}(x,y)b^l_{Q,2}(y)dy.
\end{eqnarray*}
It then follows that
$$\|(I-G_{\nu}^s)\Lambda_k \mathcal{J}_{m,\,a;\,j,\nu}^{i,s}b^l_{Q,2} \|_{L^1(\mathbb{R}^d)}\leq\int_{Q}
\|\widetilde{L}(\cdot,y)
\|_{L^1(\mathbb{R}^d)}b^l_{Q,2}(y)dy.$$
Repeating the proof of Lemma 4.2 in \cite{dinglai}, we  know that there exists a constant $\upsilon>0$ such that
$$\|\widetilde{L}(\cdot,y)\|_{L^1(\mathbb{R}^d)}\lesssim 2^{\iota s-s\gamma(d-1)-j+k+s\gamma \upsilon}\|\Omega_i\|_{L^{\infty}(S^{d-1})}.$$
Note that
$$\|(I-\Psi_{s\kappa -j})\widetilde\Lambda_{k}\|_{L^1(\mathbb{R}^d)\rightarrow L^1(\mathbb{R}^d)}
\leq\|F^{-1}(\widetilde \beta_{k})-\psi_{s\kappa-j}*F^{-1}(\widetilde\beta_{k})\|_{L^1(\mathbb{R}^d)}
\lesssim1.$$ It then follows that
\begin{eqnarray}\label{eq5.2secondtolast}&&\|(I-G_{\nu}^s)(I-\Psi_{s\kappa -j})\Lambda_{k}
\mathcal{J}_{m,\,a;\,j,\nu}^{i,s}b^l_{Q,2}\|_{L^1(\mathbb{R}^d)}\\
&&\quad\lesssim2^{\iota s-s\gamma(d-1)-j+k+s\gamma \upsilon}
\|\Omega_i\|_{L^{\infty}(S^{d-1})}\|b^l_{Q,2}\|_{L^1(\mathbb{R}^d)}.\nonumber
\end{eqnarray}
On the other hand, we can write
\begin{eqnarray*}
&&\|(I-G_{\nu}^s)(I-\Psi_{s\kappa -j})\Lambda_{k}\mathcal{J}_{m,\,a;\,j,\nu}^{i,s}b^l_{Q,2}
\|_{L^1(\mathbb{R}^d)}\\
&&\quad\leq \|(I-\Psi_{s\kappa -j})\widetilde\Lambda_{k}\|_{L^1(\mathbb{R}^d)\rightarrow L^1(\mathbb{R}^d)}\\
&&\qquad\times \|(I-G_{\nu}^s)\Lambda_k\|_{L^1(\mathbb{R}^d)\rightarrow L^1(\mathbb{R}^d)}
\|\mathcal{J}_{m,\,a;\,j,\nu}^{i,s}b^l_{Q,2}\|_{L^1(\mathbb{R}^d)}.
\end{eqnarray*}
Let $W_{k,j,\kappa}^s$ be the kernel of the convolution operator $(I-\Psi_{s\kappa -j})\widetilde\Lambda_{k}$. The mean value formula now tells us that
\begin{eqnarray*}\int_{\mathbb{R}^d}|W_{k,j,\kappa}^s(y)|dy&\leq &
\int_{\mathbb{R}^d}\int_{\mathbb{R}^d}|F^{-1}\widetilde\beta_{k}(y)-F^{-1}\widetilde\beta_{k}(y-z)
|\psi_{s\kappa-j}(z)dzdy\\
&\lesssim &2^{j-s\kappa-k}.\nonumber
\end{eqnarray*}
Obviously, we have that
$$\|(I-G_{\nu}^s)\Lambda_k\|_{L^1(\mathbb{R}^d)}\lesssim1 $$ and
$$\|\mathcal{J}_{m,\,a;\,j,\nu}^{i, s}b^l_{Q,2}\|_{L^1(\mathbb{R}^d)}\lesssim 2^{-s\gamma(d-1)}\|\Omega_i\|_{L^{\infty}(S^{d-1})}
\|b^l_{Q,2}\|_{L^1(\mathbb{R}^d)}.
$$
Therefore,
\begin{eqnarray}\label{eq5.2last}
&&\|(I-G_{\nu}^s)(I-\Psi_{s\kappa -j})\Lambda_{k}\mathcal{J}_{m,\,a;\,j,\nu}^{i, s}b^l_{Q,2}
\|_{L^1(\mathbb{R}^d)}
\lesssim2^{-s\gamma(d-1)+j-k-s\kappa}\|b^l_{Q,2}\|_{L^1(\mathbb{R}^d)}.
\end{eqnarray}

Let $m=j-\lfloor s\varepsilon_{0}\rfloor$, with $0<\varepsilon_{0}<1 $ a constant. Recall that ${\rm card}(\mathfrak{E}^s)
\lesssim2^{s\gamma(d-1)}.$ Combining the inequalities (\ref{eq5.2secondtolast}) and (\ref{eq5.2last})   leads to that
\begin{eqnarray*}
&&\sum_{\nu}\|(I-G_{\nu}^s)(I-\Psi_{s\kappa -j})\mathcal{J}_{m,\,a;\,j,\nu}^{i, s}b^l_{Q,2}\|_{L^1(\mathbb{R}^d)}\\
&&\quad\leq(\sum_{\nu}\sum_{k<m}+\sum_{\nu}\sum_{k\geq m})\|(I-\Psi_{s\kappa -j})(I-G_{\nu}^s)
\Lambda_{k}\mathcal{J}_{m,\,a;\,j,\nu}^{i, s}b^l_{Q,2}\|_{L^1(\mathbb{R}^d)}\\
&&\quad\lesssim(2^{s\varrho_*}+2^{s\varrho^*})\|\Omega_i\|_{L^{\infty}(S^{d-1})}
\|b^l_{Q,2}\|_{L^1(\mathbb{R}^d)},
\end{eqnarray*}
where $\varrho_*=(\iota-\varepsilon_{0}+\gamma\upsilon)$ and $\varrho^*=-\kappa+\varepsilon_0.$
We   choose $0<\iota<\gamma<\varepsilon_0<\kappa<1$ such that $\max{\{\varrho_*,\varrho^*\}}<0$.
Let $\varrho=-\max\{\varrho_*,\varrho^*\}$. The conclusion of Lemma \ref{lemma5.5} now follows directly.
\end{proof}
\subsection{Proof of inequality (\ref{eq3.last})} At first, we choose the constant $N_1>10$.
Let $$W_1(x)=\sum_{i=0}^{\infty}\sum_{l=1}^{\infty}\sum_{m=1}^{l+i^*}
\sum_{s>N_1(l+i)}\sum_{r=0}^{s2^{l+2}-1}M\Big(\sum_{j}
\big|\big(\mathcal{H}_{m,a;\,j}^i-\mathcal{J}_{m,\,a;\,j}^i)B^{l}_{j-s}\big|\Big)(x),
$$
$$W_2(x)=\sum_{i=0}^{\infty}\sum_{l=1}^{\infty}\sum_{m=1}^{l+i^*}
\sum_{s>N_1(l+i)}\sum_{r=0}^{s2^{l+2}-1}M\Big(\sum_{j=r(\hbox{mod} \ s2^{l+2})}
\mathcal{J}_{m,\,a;\,j}^iB_{j-s, 2}^l\Big)(x),
$$
and
$$W_3(x)=\sum_{i=0}^{\infty}2^i\sum_{l=1}^{\infty}(l+i)
\sum_{s> N_1(l+i)}s2^{l+2}2^{-s}
\sum_{j\in\mathbb{Z}}\sum_{Q\in\mathcal{S}_{j-s}}v_{j}*|b^l_{Q,2}|(x).$$
It then follows from Lemma \ref{lem4.2} that for some positive constant $C_1$,
$${\rm D}_{52}b(x)\leq \frac{C_1}{64}\Big(W_1(x)+W_2(x)+W_3(x)
+Mb(x)\Big).$$
Take $N_1=10\lfloor \iota^{-1}\rfloor+1$. A trivial computation  leads to that
\begin{eqnarray*}
&&| \{x\in\mathbb{R}^d:\, W_3(x)>C_1^{-1} \} | \\
&&\quad\lesssim \sum_{i=0}^{\infty}2^i\sum_{l=1}^{\infty}(l+i)
\sum_{s> N_1(l+i)}s2^{l+2}2^{-s}
\sum_{j\in\mathbb{Z}}\sum_{Q\in\mathcal{S}_{j-s}}\|v_{j}*|b^l_{Q,2}|\|_{L^1(\mathbb{R}^d)}\\
&&\quad\lesssim \sum_{i=0}^{\infty}2^i\sum_{l=1}^{\infty}(l+i)2^{l}
\sum_{s=N_1(l+i)}^{\infty}s2^{-s}\|f\|_{L^1(\mathbb{R}^d)}\lesssim \|f\|_{L^1(\mathbb{R}^d)}.
\end{eqnarray*}
Let $$c_2=\big(\sum_{i=1}^{\infty}i^{-2}\big)^5,\,\,c(i,l,s,m,r)=c_2^{-1}\big((i+1) l sm(r+1)\big)^{-2}.$$
It follows  from the pigeonhole principle,  the estimate (\ref{eq4.approximation})  and the weak type $(1,\,1)$ estimate of $M$, that
\begin{eqnarray*}
&&| \{x\in\mathbb{R}^d:\, W_1(x)>C_1^{-1} \} | \\
&&\quad\lesssim \sum_{i=0}^{\infty}\sum_{l=1}^{\infty}\sum_{m=1}^{l+i^*}
\sum_{s=N_1(l+i)}^{\infty}\sum_{r=0}^{s2^{l+2}-1}\Big|\Big\{x\in\mathbb{R}^d:\\
&&\qquad M\Big(\sum_{j=r(\hbox{mod} \ s2^{l+2})}
\big|\big(\mathcal{H}_{m,a;j}^i-\mathcal{J}_{m,a;j}^i)B_{j-s}\big|\Big)(x)>
c(i,l,s,m,r)C_1^{-1}\Big\}\Big |\\
&&\quad\lesssim \sum_{i=0}^{\infty}i^22^i\sum_{l=1}^{\infty}(l+i)^2l^22^{2l}
\sum_{s=N_1(l+i)}^{\infty} s^{4} 2^{- \iota s}\|f\|_{L^1(\mathbb{R}^d)}\lesssim
\|f\|_{L^1(\mathbb{R}^d)}.
\end{eqnarray*}
Finally, it follows from Lemma \ref{lemma5.3}-Lemma \ref{lemma5.5} that
\begin{eqnarray*}
&&| \{x\in\mathbb{R}^d:\, W_2(x)>C_1^{-1} \} |\lesssim \|f\|_{L^1(\mathbb{R}^d)}.
\end{eqnarray*}
Note that
$$| \{x\in\mathbb{R}^d:\, Mb(x)>C_1^{-1} \} |\lesssim \|f\|_{L^1(\mathbb{R}^d)}.$$
Combining the estimates for $W_1$, $W_2$ and $W_3$ then leads to inequality (\ref{eq3.last}).\qed

\begin{remark} For bounded function $f$ with compact support, let $\mathcal{S}=\{Q\}$ be the cubes in the proof (\ref{budengshi3.1}). Let
$$g^*(x)=f(x)\chi_{\mathbb{R}^d\backslash \cup_{Q\in\mathcal{S}}Q}(x)+f(x)\sum_{Q\in\mathcal{S}}\chi_{Q(|f|\leq 2^{2c_1})}(x),
$$
$$b_{Q,\,1}^{*,l}(x)=\frac{1}{|Q|}\int_{Q(2^{c_12^{2^{l-1}}}<|f|\leq 2^{c_12^{2^l}})}f(y)dy\chi_{Q}(x),$$
and
$$b_{Q,2}^{*,l}(x)=f(x)\chi_{Q(2^{c_12^{2^{l-1}}}<|f|\leq 2^{c_12^{2^l}})}(x)-b_{Q,1}^{*,l}(x).$$
Let $i^{**}=\lfloor\log \log \big(\frac{2}{\tau}({\rm e}^2+i)\big)\rfloor+1$. Observe that
$$\sum_{m=1}^{i^{**}}2^{2^m}\le \sum_{j=1}^{2^{2^{**}}}2^{j}\leq 22^{2^{i^{**}}}\lesssim i^2.$$
Repeating the proof of Theorem \ref{dingli1.main}, but replacing $i^*$, $U_{l,\,a,j}$ and $U_{l,\,a,j}^i$ by $i^{**}$,  $U_{2^l,a,j}$ and $U^i_{2^l,a,j}$ respectively, we can verify the following conclusion.
\begin{theorem}\label{dingli1.mainre}
Let  $\Omega$ be homogeneous of degree zero, satisfy the vanishing condition (\ref{equation1.1}).
Let $a$ be a   function in $\mathbb{R}^d$ such that $\nabla a\in L^{\infty}(\mathbb{R}^d)$.
Suppose that $\Omega\in L(\log L)^2(S^{d-1})$.
Then for all $\lambda>0$, $$
\big|\{x\in\mathbb{R}^d:\,|T_{\Omega,\,a}^*f(x)|>\lambda\}\big|
\lesssim \int_{\mathbb{R}^d}\Phi_3\big(\frac{|f(x)|}{\lambda}\big)dx,
$$
where $\Phi_3(t)=t\log\log\log({\rm e^{e^2}}+t)$ for $t\geq 0$.
\end{theorem}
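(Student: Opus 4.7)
The plan is to repeat the proof of Theorem \ref{dingli1.main} step by step, with the substitutions indicated in the Remark: replace each operator $U_{l,a,j}$ (respectively $U_{l,a,j}^i$) by $U_{2^l,a,j}$ (respectively $U_{2^l,a,j}^i$), replace the index $i^*$ by $i^{**}$, and rebuild the Calder\'on--Zygmund decomposition with the doubly-exponential breakpoints $2^{c_12^{2^l}}$ in place of $2^{c_12^l}$. The new bad-part pieces $b_{Q,1}^{*,l}$, $b_{Q,2}^{*,l}$, $G_1^{*,l}$, $G_2^{*,l}$, $G^{*,l}$ from the Remark then satisfy analogs of the three properties (i)--(iii) used before; in particular $\|G^{*,l}\|_{L^2(\mathbb{R}^d)}^2\lesssim 2^{c_12^{2^l}}\|f\|_{L^1(\mathbb{R}^d)}$, $\|G_1^{*,l}\|_{L^\infty(\mathbb{R}^d)}\lesssim 1$, and $\|\sum_l G_1^{*,l}\|_{L^2(\mathbb{R}^d)}\lesssim \|f\|_{L^1(\mathbb{R}^d)}^{1/2}$.

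The key numerology is the identity $\sum_{m=1}^{i^{**}}2^{2^m}\lesssim 2^{2^{i^{**}}}\lesssim i$ pointed out in the Remark, which plays exactly the role that $\sum_{m=1}^{i^*}2^m\lesssim 2^{i^*}\lesssim i$ played in the original argument. Consequently, wherever the original proof telescoped $U_{i^*,a,j}^i-U_{0,a,j}^i=\sum_{m=1}^{i^*}(U_{m,a,j}^i-U_{m-1,a,j}^i)$ against Lemma \ref{yinli2.5}, the modified proof telescopes $U_{2^{i^{**}},a,j}^i-U_{0,a,j}^i=\sum_{m=1}^{i^{**}}(U_{2^m,a,j}^i-U_{2^{m-1},a,j}^i)$ and each single step has operator norm $\lesssim 2^{2^m}\|\Omega_i\|_{L^1(S^{d-1})}$ by the same Fourier computation, so the total is still $\lesssim i\|\Omega_i\|_{L^1(S^{d-1})}$. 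Likewise, Theorem \ref{dingli2.8} applied to the new $U$ produces $L^2$ decay $2^{-\tau 2^{2^{l+i^{**}}}}$, which (using $2^{2^{i^{**}}}\gtrsim i$) more than absorbs $\|\Omega_i\|_{L^\infty(S^{d-1})}\lesssim 2^i$ together with the growth $(2^{c_12^{2^l}}\|f\|_{L^1(\mathbb{R}^d)})^{1/2}$ of $\|G^{*,l}\|_{L^2(\mathbb{R}^d)}$, provided $0\leq c_1<\tau$.

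Decomposing $T_{\Omega,a}^{**}b^*\leq \sum_{m=1}^5 D_m^*b$ in the obvious way, the terms $D_1^*,D_2^*$ are controlled by Theorem \ref{dingli2.8} exactly as in Section \ref{section3} (with the new constants above); $D_3^*$ yields $\lesssim\sum_i i\,\|\Omega_i\|_{L^1(S^{d-1})}\|f\|_{L^1(\mathbb{R}^d)}^{1/2}\lesssim\|\Omega\|_{L\log L(S^{d-1})}\|f\|_{L^1(\mathbb{R}^d)}^{1/2}$ via the telescoping just described; and $D_4^*$ is handled directly by Theorem \ref{dingli2.9}. The splitting $D_5^*\leq D_{51}^*+D_{52}^*$ at $s\lessgtr N_1(l+i)$ is carried out as before. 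For $D_{51}^*$, summing the crude $L^1$ bounds over $Q$, $j$, $s$, $l$, and using $\sum_l l\,\|b_{Q,2}^{*,l}\|_{L^1(\mathbb{R}^d)}\sim\int|f(x)|\log\log\log(\mathrm{e}^{\mathrm{e}^2}+|f(x)|)\,dx=\int\Phi_3(|f(x)|)\,dx$, which replaces the $\Phi_2$ identity of the original proof, yields the desired $\int\Phi_3(|f|)\,dx$ bound.

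The main obstacle is $D_{52}^*$. The Cotlar inequality of Lemma \ref{lem4.2} must be re-established in the new scaling: the admissible range becomes $m\leq 2^{l+i^{**}}$, and auxiliary geometric factors such as $s2^{l+2}$ from the modulus-$s2^{l+2}$ pigeonhole become $s2^{2^l+2}$, with correspondingly larger $r$-ranges. One checks that by choosing $N_1$ sufficiently large (depending on $\iota$ and $\tau$) the modified quantities $W_1^*$, $W_2^*$, $W_3^*$ still have absolutely convergent $l$- and $s$-sums. The microlocal Lemmas \ref{lemma5.3}--\ref{lemma5.5} transfer verbatim, since each is pointwise in $(j,m)$ and only sees $\|\Omega_i\|_{L^\infty(S^{d-1})}$. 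When all the pieces are assembled, the $i$-summation in $W_2^*$ picks up an additional factor of $i$ beyond the $i$ already produced by $2^{2^{i^{**}}}$, coming from the enlarged $m$-range in the modified Cotlar estimate; the final bound therefore requires $\sum_i i^2\|\Omega_i\|_{L^1(S^{d-1})}\lesssim\|\Omega\|_{L(\log L)^2(S^{d-1})}$, and this is precisely where the stronger hypothesis $\Omega\in L(\log L)^2(S^{d-1})$ enters the argument. Combining all five estimates yields the claimed $L\log\log\log L$ endpoint bound.
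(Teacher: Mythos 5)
Your overall strategy is the same as the paper's: rerun the proof of Theorem \ref{dingli1.main} with the doubly exponential Calder\'on--Zygmund thresholds, with $i^{**}$ in place of $i^{*}$ and $U_{2^l,a,j}$, $U^i_{2^l,a,j}$ in place of $U_{l,a,j}$, $U^i_{l,a,j}$. However, the one genuinely new quantitative ingredient is misstated in your write-up. Since $i^{**}=\lfloor\log\log(\frac{2}{\tau}({\rm e}^2+i))\rfloor+1$, one only has $2^{i^{**}}\approx 2\log\big(\tfrac{2}{\tau}({\rm e}^2+i)\big)$, hence $\sum_{m=1}^{i^{**}}2^{2^m}\lesssim 2^{2^{i^{**}}}\lesssim i^2$, and \emph{not} $\lesssim i$ as you assert (this is exactly the inequality displayed in the paper's remark, with $i^2$ on the right). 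Consequently the telescoping $U^i_{2^{i^{**}},a,j}-U^i_{0,a,j}=\sum_{m=1}^{i^{**}}(U^i_{2^m,a,j}-U^i_{2^{m-1},a,j})$, with per-step cost $2^{2^m}\|\Omega_i\|_{L^1(S^{d-1})}$ (which you correctly record), produces $i^2\|\Omega_i\|_{L^1(S^{d-1})}$, and it is this ${\rm D}_3$-analogue that consumes the hypothesis via $\sum_i i^2\|\Omega_i\|_{L^1(S^{d-1})}\lesssim\|\Omega\|_{L(\log L)^2(S^{d-1})}$. Your alternative accounting --- that ${\rm D}_3^*$ only needs $\|\Omega\|_{L\log L(S^{d-1})}$ and that the missing factor of $i$ instead comes from ``the enlarged $m$-range in the modified Cotlar estimate'' in $W_2^*$ --- is not correct: the $m$-range there becomes $1\le m\le l+i^{**}$, which is \emph{shorter} than the original $1\le m\le l+i^{*}$, and in the estimates for $W_1,W_2,W_3$ the dependence on $i$ enters only through $\|\Omega_i\|_{L^{\infty}(S^{d-1})}\le 2^i$ damped by $2^{-cs}$ with $s>N_1(l+i)$ (and through the weights $c(i,l,s,m,r)$), so no $i$- or $i^2$-weighted $\|\Omega_i\|_{L^1}$ sum arises from that part. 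Since identifying where $L(\log L)^2$ is actually used is the entire content of this theorem beyond Theorem \ref{dingli1.main}, this misattribution is a genuine gap, not a cosmetic slip.

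A second, smaller point: the adaptation of Lemma \ref{lem4.2} is asserted rather than checked, and your specific choices do not work as stated. With pigeonhole modulus $s2^{2^l+2}$ and the cut $s>N_1(l+i)$, the shift $t_q$ has size about $s2^{2^l+1}$, which no longer dominates the largest mollification exponent, now $2^{2^{l+i^{**}}}\approx (Ci)^{2^{l+1}}$; thus the separation inequality that makes ${\rm I}_q$ small (the analogue of $-t_q+2^m-ps2^{l+2}-r<-2s-s2^{l+1}$) fails for large $i$, and in any case the residue count $s2^{2^l+2}$ multiplied by $2^{-s}$ is not summable over $l$ for any fixed $N_1$. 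Repairing this by letting the $s$-threshold grow like $2^l$ would upgrade the ${\rm D}_{51}^*$ cost from $\sum_l l\|b^{*,l}_{Q,2}\|_{L^1(\mathbb{R}^d)}\sim\int\Phi_3(|f|)$ to $\sum_l 2^l\|b^{*,l}_{Q,2}\|_{L^1(\mathbb{R}^d)}\sim\int|f|\log\log({\rm e}^2+|f|)$, overshooting the target. So the pigeonholing, the shift, and the ${\rm D}_{51}/{\rm D}_{52}$ splitting must be recalibrated together with some care; ``choose $N_1$ sufficiently large'' does not by itself make the modified $W_1^*,W_2^*,W_3^*$ sums converge.
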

\end{remark}

\end{document}